\title{On global behaviour of classical effective field theories}
\author{Istvan Kadar}
\theoremstyle{definition}
\newtheorem*{claim}{Claim}
\newtheorem*{theorem*}{Theorem}
\newtheorem*{lemma*}{Lemma}
\theoremstyle{plain}
\newtheorem{theorem}{Theorem}[section]
\newtheorem{lemma}[theorem]{Lemma}
\newtheorem{prop}[theorem]{Proposition}
\theoremstyle{definition}
\newtheorem{definition}[theorem]{Definition}
\theoremstyle{remark}
\newtheorem{remark}[theorem]{Remark}
\crefname{lemma}{Lemma}{Lemmas}
\crefname{prop}{Proposition}{Proposition}
\crefname{conjecture}{Conjecture}{Conjecture}
\crefname{cor}{Corollary}{Corollary}
\crefname{remark}{Remark}{Remark}
\crefname{defi}{Definition}{Definition}
\newcommand{\C}{\mathbb{C}}
\newcommand{\R}{\mathbb{R}}
\newcommand{\T}{\mathbb{T}}
\newcommand{\Z}{\mathbb{Z}}
\newcommand{\abs}[1]{\left\lvert #1\right\rvert}
\newcommand{\jpns}[1]{\langle #1 \rangle}
\newcommand{\norm}[1]{\left\lVert #1\right\rVert}
\renewcommand{\d}{\mathrm{d}}
\DeclareMathOperator{\supp}{supp}
\numberwithin{equation}{subsection}
\newcommand{\sob}[2]{\norm{#1}_{H^{#2}}}
\newcommand{\intt}{\int_1^t\d s}
\newcommand{\intphi}{\int_1^t\d s \int \d\nu e^{is\phi}}
\newcommand{\intl}{\int_1^t\d s e^{-is\jpns{D}}}
\newcommand{\inth}{\int_1^t\d s e^{-is\jpns{D}_M}}
\renewcommand{\d}{\text{d}}
\begin{document}
	\maketitle
	\begin{abstract}
	We continue the rigorous study of classical effective field theories (EFTs) that was recently initiated in the work of Reall and Warnick \cite{reall_effective_2022}. We study a system with one light and one heavy field with cubic coupling and prove global existence (of the UV solution) under an effective norm in the high mass limit. Furthermore, we prove that the global solution linearly scatters (in Lax-Philips sense) and that this final state has an expansion in inverse powers of the mass, and can be recovered from the EFT equation alone.
	\end{abstract}
	\setcounter{tocdepth}{1}
	\tableofcontents
	\section{Introduction}
	\subsection{Effective field theories and singular limits}
	In this article, we will investigate the equation of motion corresponding to the Lagrangian
	\begin{equation}
		\begin{gathered}
			\mathcal{L}=\frac{1}{2}(\partial \bar U\cdot \partial \bar U +\bar U^2)+\frac{1}{2}(\partial \bar V\cdot\partial \bar V+M^2\bar{V}^2)+W(\bar{U},\bar{V}),
		\end{gathered}
	\end{equation}
	where $\bar U,\bar V:\R_t\times\R_x^{3}\to\R$ are the light and heavy fields, $W:\R^2\to\R$ is the nonlinear potential that couples $\bar{U}$ to $\bar{V}$, and  $\partial\bar{U}\cdot\partial\bar{V}=-\partial_t\bar{U}\partial_t\bar{V}+\partial_{x_i}\bar{U}\partial_{x_i}\bar{V}$. For concreteness, we will take $W(\bar{U},\bar{V})=\frac{\bar{U}^2\bar{V}}{2}$, but the methods are applicable to more general potentials that satisfy
	\begin{itemize}
		\item $W$ is smooth in its argument.
		\item $W^{-1}(0)=(0,0)$.
		\item  at least cubic, ie. $\abs{W}\leq (U^2+V^2)^{3/2}$.
	\end{itemize}
	\begin{remark}
		The first condition may be relaxed to some $C^k$ regularity, depending on the number of derivatives our argument uses. The second requirement has the physical interpretation of a trivial vacuum. \footnote{Note that this may be dropped by some assumption on the decay of initial data as in \cite{tao_nonlinear_2006} Exercise 6.18} The last point is important for the non-linearity to be perturbative, ie. to have sufficient decay.\footnote{For non-trivial effective field theory (to be defined below) one needs $W$ such that $\partial_{\bar{V}}W|_{\bar{V}=0}(U)$ is not zero around the origin, because the PDE methods only work at tree level \cite{skinner_david_advanced_2020}.}
	\end{remark}
	
	At the level of the equation of motion, we are interested in the $M\to\infty$ limit for
	\begin{equation}\label{original equation of motion}
		\begin{gathered}
			(\Box-1)\bar U=\bar U\bar V\\
			(\Box-M^2)\bar V=\frac{\bar U^2}{2}\\
			\bar U(0)=\bar U_0, \quad\partial_t\bar U(0)=\bar U_1,\quad \bar V(0)=\bar V_0,\quad \partial_t\bar V(0)=\bar V_1
		\end{gathered}
	\end{equation}
	where $\Box=-\partial_t^2+\partial_{x_i}\partial_{x_i}$.
	
	This equation is meant to represent the interaction of a light ($\bar{U}$) and a heavy ($\bar{V}$) field and is called an ultraviolet (UV) equation. The heuristic is that, in the $M\to\infty$ limit, under some conditions, one can "integrate out" the heavy field and be left with an equation for the light one (\cite{burgess_introduction_2020}). On a purely formal basis, one performs a binomial expansion on $\bar{V}=(\Box-M^2)^{-1}\bar{U}^2/2$ treating "$\Box\ll M^2$" and truncate after some finite number of terms. Then, one plugs $\bar{V}$ in the equation for $\bar{U}$ to get a higher order PDE, which we call an effective field theory (EFT) for $\bar{U}$. The aim of this paper is to shed some more light on the rigorous foundations of this idea. Our main results are global existence and scattering for the above equation under certain restrictions on the initial data that are natural in this context (see \cite{reall_effective_2022}).
	
	\begin{theorem*}[Main theorem, schematic. \cref{global existence}, \cref{scattering lemma}]
		The system \ref{original equation of motion}, with initial data $\bar{U}_0,\bar{V}_0,\bar{U}_1,\bar{V}_1$ with finite energy in an \textit{effective} norm admits a global  solution for all $M>M_0$ where $M_0$ depends on the initial energy. In particular, $\bar{U}$ can have $\mathcal{O}(1)$ $H^s$ norm. Moreover, the light field $\bar{U}$ scatters to the solution of an effective field theory (EFT) with the same initial data.
	\end{theorem*}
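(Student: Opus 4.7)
The plan is to eliminate the heavy field perturbatively and reduce the coupled system to a higher-derivative Klein-Gordon equation for $\bar U$ alone. Fix a truncation order $N$ and write $\bar V = \bar V_{\mathrm{slow}} + \bar V_{\mathrm{osc}}$, with
\[
\bar V_{\mathrm{slow}} = -\frac{1}{2}\sum_{j=0}^{N} M^{-2(j+1)} \Box^{j}\bar U^{2},
\]
the truncated formal expansion of $(\Box - M^2)^{-1}(\bar U^2/2)$ in powers of $\Box/M^2$. A direct calculation gives $(\Box - M^2)\bar V_{\mathrm{osc}} = \tfrac12 M^{-2(N+1)}\Box^{N+1}\bar U^{2}$, so $\bar V_{\mathrm{osc}}$ is a mass-$M$ Klein-Gordon wave driven by a source that is formally small of order $M^{-2(N+1)}$. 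I would define the effective norm to be a high-order Sobolev/vector-field energy of $\bar U$ together with the mass-$M$ Klein-Gordon energy of $\bar V_{\mathrm{osc}}$ amplified by $M^{2(N+1)}$, so that finite effective energy is compatible with $\bar V_{\mathrm{osc}}$ being small of order $M^{-(N+1)}$. Substituting the decomposition into $(\Box-1)\bar U = \bar U\bar V$ produces
\[
(\Box-1)\bar U = -\frac{\bar U^3}{2M^2} - \frac{1}{2}\sum_{j=1}^{N} M^{-2(j+1)} \bar U\,\Box^{j}\bar U^{2} + \bar U\,\bar V_{\mathrm{osc}},
\]
i.e.\ the candidate EFT equation for $\bar U$ plus a coupling to the oscillating remainder.

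Global existence is then a continuation/bootstrap on the effective energy over $[0,T)$. The leading nonlinearity is weighted by $M^{-2}$; combined with Klainerman--Sobolev $L^\infty$ decay $\|\bar U(t)\|_{L^\infty_x}\lesssim \langle t\rangle^{-3/2}$, obtained by commuting the hyperbolic Klein-Gordon vector fields with the equation, this makes the cubic interaction $L^1_t$-integrable for $M$ large, and Gr\"onwall closes the bootstrap. The higher-derivative terms $M^{-2(j+1)}\bar U\,\Box^{j}\bar U^{2}$ and the coupling $\bar U\bar V_{\mathrm{osc}}$ are controlled analogously at the cost of spending additional spatial derivatives, which is precisely what forces the effective norm to be of high regularity. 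Once global existence with dispersive decay is established, scattering of $\bar U$ follows from convergence of the Duhamel integral in the energy, producing a Lax--Phillips asymptote $U_\infty$. Running the same scheme on the EFT equation (the same equation with $\bar V_{\mathrm{osc}}\equiv 0$) for the same initial data yields its own scattering state $U_\infty^{\mathrm{EFT}}$, and a Gr\"onwall comparison of the two Duhamel representations gives $\|U_\infty - U_\infty^{\mathrm{EFT}}\| = \mathcal{O}(M^{-2(N+1)})$, so $\bar U$ and the EFT solution share the same scattering state to arbitrarily high order in $1/M$.

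The main obstacle is propagating the $M^{-(N+1)}$ smallness of $\bar V_{\mathrm{osc}}$ through the nonlinear evolution. A direct mass-$M$ Klein-Gordon energy estimate for $(\Box-M^2)\bar V_{\mathrm{osc}}=f$ has constants independent of $M$, so the full $M^{-2(N+1)}$ gain can only come from the source $f = \tfrac12 M^{-2(N+1)}\Box^{N+1}\bar U^{2}$; bounding $f$ consumes many derivatives of $\bar U$ and explains the high index in the effective norm. Even with this regularity, $\bar U^{2}$ has no intrinsic amplitude smallness, so one must exploit the fast oscillation of the mass-$M$ propagator and integrate by parts in $t$ (a Poincar\'e--Dulac / normal-form transformation in the Duhamel formula using the phase $e^{\pm iMt}$) to trade each time derivative of the source for a factor of $M^{-1}$ at the energy level. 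Reconciling this normal-form procedure with the vector-field-based decay estimates for $\bar U$---in particular, controlling the $t$-weighted commutator terms produced when integrating by parts---is where the technical work is concentrated.
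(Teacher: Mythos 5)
There is a genuine gap in your approach, and it is precisely at the point you flag as ``where the technical work is concentrated.'' Your decay estimates for $\bar U$ and $\bar V_{\mathrm{osc}}$ rely on Klainerman--Sobolev obtained ``by commuting the hyperbolic Klein-Gordon vector fields with the equation.'' For the heavy field, this requires controlling the Lorentz boosts $\Gamma = x_i\partial_t - t\partial_{x_i}$ applied to $\bar V_{\mathrm{osc}}$. Since $\bar V_{\mathrm{osc}}$ oscillates in time at frequency $\sim M$, each $\partial_t$ hitting it produces a factor of $M$, so the weighted vector-field energies $\norm{\Gamma^j \bar V_{\mathrm{osc}}}_{L^2}$ that the method needs (one typically needs $j$ up to $5$) grow like $M^j$ relative to the unweighted energy. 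This ruins exactly the $M^{-(N+1)}$ smallness you are trying to propagate. The paper explicitly rules out the hyperboloidal/vector-field route for this reason and instead uses the space-time resonance method, which performs the normal-form integration by parts \emph{in Fourier}, where the analogue of ``spending a time derivative'' is division by the phase $\phi$, costing no commutators with $t$-weighted vector fields. The whole of \cref{frequency analysis} (separating the resonant sets, quantifying the lower bounds $\abs{\phi}\gtrsim M/\jpns{\nu/M}$ and $\abs{\nabla_\nu\phi}\gtrsim\jpns{\nu/M}^{-2}$, and showing the resonant sets are $\sqrt{M}$-separated) exists to make this division harmless; none of that has a counterpart in your plan.

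A secondary, smaller difference: the paper does not start from the full truncated expansion $\bar V_{\mathrm{slow}}=-\tfrac12\sum_{j\le N}M^{-2(j+1)}\Box^j\bar U^2$. It performs a \emph{single} substitution $V=\bar V+\bar U^2/2M^2$ (your $j=0$ term) so that the forcing on $V$ is suppressed by one power of $M^{-2}$, then rescales $U=\bar U/M$ to convert the problem into a genuine small-data problem in the parameter $1/M$, and runs a bootstrap in a composite norm $\norm{\cdot}_X$ built from $H^N$ energies, weighted-profile norms $\norm{x\,l}_{H^{k+3/2}}$, and $t$-weighted $W^{k,p}$ decay norms with $p\approx 6$ (not $p=\infty$; the sharp $t^{-3/2}$ $L^\infty$ decay you invoke is explicitly \emph{not} proved in the paper and is noted as unnecessary). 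The higher-order iterates $V^m=V^{m-1}-F_m[U]/M^{2m}$ appear only in \cref{scattering section}, after global existence is already in hand, to identify the EFT and the scattered state. Your plan of front-loading the full expansion into the existence proof forces you to control many more derivatives and higher-degree terms simultaneously, which the paper avoids.
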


	Effective field theories are important to the current understanding of our physical models. For a recent comprehensive guide see \cite{burgess_introduction_2020}. Non-renormalisable theories at the dawn of quantum field theory were treated as \textit{bad} theories, since they did not make sense in the high frequency limit. A typical example for such a theory is the Fermi 4 point interaction. In physics parlance, the interaction term between the 4 fermions in the Lagrangian is not renormalisable, as its coupling has negative mass dimension, therefore it is not easy to interpret what predictions the theory has for high energy particles.\footnote{The resolution of the apparent problems was the introduction of electroweak theory with the W and Z bosons, see chapter 7 \cite{burgess_introduction_2020} for a comprehensible guide. } Nevertheless the theory had great predictive success in the low energy (perturbative) setting, and it is important to understand when and how these \textit{bad} theories are able to give valid predictions. Moreover, our current understanding of gravity as a quantum field theory (treated with the naive canonical quantisation scheme) is non renormalisable, thus the 
	need for its treatment as an EFT \cite{burgess_quantum_2004}.
	
	These physical motivations (and more) lay the ground for a recent rigorous study of EFTs \cite{reall_effective_2022}, where the authors consider the equation of motion derived from the Lagrangian ${\partial_\mu\phi\partial^\mu\phi+\frac{M^2}{8}(\abs{\phi}^2-1)^2}$ in the limit $M\to\infty$, for $\phi:\T^n\to\C$. The regime of applicability of both the classical approximation and the EFT framework at first glance seem not to overlap, which is why they introduce conditions on initial data. We refer the reader to their paper for detailed explanation. This paper is meant as a natural continuation of their work on the mathematical foundations of the subject. In current mathematical PDE terminology, they were able to
	\begin{itemize}
		\item give a definition of an EFT to capture expected physical properties
		\item prove ($M$ independent) local existence for the UV equation with initial data that heuristically correspond to the heavy degree of freedom being in its ground state
		\item prove uniqueness of the EFT solutions (which is a solution modulo controlled error terms)
		\item compare EFT and UV solutions.
	\end{itemize}
	As mentioned above, we extend these for a different, though similar system with 2 extra points
	\begin{itemize}
		\item global existence of solutions under EFT conditions
		\item scattering theory of EFTs.
	\end{itemize}
	
	On the more mathematical side, the study of EFTs resembles the study of singular limits (see eg. \cite{masmoudi_nonlinear_2002},\cite{pasquali_dynamics_2019},\cite{machihara_nonrelativistic_2002},\cite{klainerman_singular_1981} ), but where the limit is not taken, rather an expansion in the smallness parameter is sought. As an example of singular limit, one can analyse the non-relativistic limit of the non-linear Klein-Gordon equation
	\begin{gather*}\label{Klein}
		\Big(\frac{1}{c^2}\partial_{tt}-\Delta+\frac{m^2c^2}{\hbar^2}\Big)u_c=f(u_c)
	\end{gather*}
	$c\to\infty$. As shown in \cite{masmoudi_nonlinear_2002}, one recovers a system of Schrodinger equation for $v=(v_+,v_-)$
	\begin{gather*}\label{Sch}
		\Big(i\partial_t +\frac{\hbar}{2m}\Delta\Big)v=\tilde{f}(v)
	\end{gather*}
	where $u_c=e^{ic^2t}v_++e^{-ic^2t}v_-+o(1)$, and $\tilde{f}$ depends on $f$.
	In such singular limits, when one is effectively ignoring high frequency part of the solution, the non-linearities can either pass to a limit (eg. for the example above, cubic term $f(u)=\abs{u}^2u$ gives cubic limit $\tilde{f}_{\pm}(v)=(\abs{v_\pm}^2+2\abs{v\mp}^2)v_\pm$), or if one takes a weak limit, new forcing terms might be introduced as in the case of Burnett's conjecture (\cite{huneau_high-frequency_2018},\cite{burnett_high-frequency_1989}) or an inverted pendulum (\cite{evans_weak_2016}).
	
	Our aim is not to pass to a limit (which would be linear Klein-Gordon $(\Box-1)\bar{U}=0$), instead given equations of motion such as \cref{original equation of motion}	we want to have a quantitative control on the error term in the $M\to\infty$ limit of an expansion as
	\begin{equation*}
		\begin{gathered}
			(\Box-1)\bar{U}=\sum_{i=1}^n\frac{F_i[\bar{U}]}{M^i}+\frac{\text{error}}{M^{n+1}}.
		\end{gathered}
	\end{equation*}
	where $F_i$ are local functionals of $\bar{U}$.	In a sense, this difference is one between quantitative and qualitative description of the limit, and so our methods of study differ considerably from the singular limits context, therefore we will pursue no further connections.
	
	\subsection{Previous works}\label{previous works}
	Equation \ref{original equation of motion} is a system of non-linear Klein-Gordon equations. Concerning global existence, there are two main lines of study. Either one considers small initial data in a fixed (maybe high regularity) norm, where dispersion gives decay and the nonlinearity will (hopefully) be perturbative, or aims to prove a continuation result at low regularity and use a conserved quantity to get global solutions. We focus on the first of these, but see \cite{tao_nonlinear_2006} for some results in the second direction.
	
	One of the first works for small data problems was Shatah's work (\cite{shatah_global_1982}) to treat the cases where there is plenty of decay to control the solutions without a detailed analysis. In particular, he did not consider the case of quadratic forcing in $\R^{3+1}$. This was later completed by him \cite{shatah_normal_1985} introducing the method of normal forms and independently by Klainerman \cite{klainerman_global_1985} using hyperboloidal foliations. This was subsequently extended to $\R^{2+1}$ \cite{ozawa_global_1996} and many other settings, see the introduction of \cite{germain_global_2011} for a comprehensive review of literature. (For results in the case of non algebraic non-linearities see the introduction of \cite{keel_small_1999}.) Both of the approaches proved to be very important for treating similar problems, we mention just few results using each method (the lists are far from exhaustive). The methods of normal forms, with some extension is now known as space-time resonance method and was used to prove global existence results for non-linear Schroedinger \cite{germain_global_2008} \cite{germain_global_2012}, for gravity water waves \cite{germain_global_2012-1}, for Klein-Gordon systems \cite{germain_global_2011} \cite{ionescu_global_2014} \cite{deng_multispeed_2018}, wave Klein-Gordon systems \cite{ionescu_global_2019}. The hyperboloidal foliation was perfected by Hörmander, \cite{hormander_lectures_1997}, and reached its format of main use by LeFloch and Ma in \cite{lefloch_hyperboloidal_2015}, so to apply to numerous different dimensions and non-linearities \cite{ma_global_2017} \cite{ma_global_2019} \cite{dong_two_2022}. Both of these methods were able to prove the stability of Minkowski space with massive scalar field \cite{lefloch_global_2017},\cite{ionescu_einstein-klein-gordon_2022}.
	
	There are several other techniques to treat small data problems for hyperbolic geometric equations, but these have not been used for Klein-Gordon fields until now, so we just mention them briefly. The first is the $r^p$ method of Dafermos and Rodnianski \cite{dafermos_new_2010}. This method was mainly developed for its applicability in non-trivial geometric backgrounds such as black holes. Its usage there is hard to overestimate, see \cite{Angelopoulos2018} and references therein. For a use of the technique in Minkowski see \cite{klainerman_global_2020}. Another technique commonly used in non-trivial geometric settings is based on Melrose's b-analysis, for Minkowski application see \cite{hintz_stability_2020}.
	
	In all of the above works concerning Klein-Gordon equations, the authors prove the existence of a small number $\epsilon$, dependent on the speeds and masses in the system, such that if some norm of the initial data is less than $\epsilon$ then global solutions exist. This is in stark contrast to our goal (see below), even though the methods will be similar. The existence of a limiting parameter would suggest the use of semiclassical techniques, but these seem not to be immediately applicable to the system under consideration as the limit we take forces the two parts of the system to decay on different timescales, so one cannot rescale spacetime coordinates/introduce weights on derivatives.
	\subsection{Difficulties and main theorem}\label{problem description}
	
	The goal of the paper is to show that the restriction on initial data, that forces the heavy field to its ground state, can give global solutions for certain Lagrangians. The result here is completely perturbative - not relying on any coercive conserved quantity (energy) - thus giving a good picture of the evolution of solution and yielding a scattering statement on constant time slices. In particular, we obtain a convergent approximation of the final solution. Here, we look at a different Lagrangian than the one considered in \cite{reall_effective_2022} due to a difficulty regarding \textit{loss of derivative}.
	
	Motivated by \cite{reall_effective_2022}, we want to consider global existence for this problem with $\mathcal{O}(1)$ data thought of as an EFT, ie. for some fixed $E$ (independent of $M$)
	\begin{equation}\label{original initial data}
		\sob{\bar U_0}{N}+\sob{\bar U_1}{N-1}+M^{1+p}\sob{\bar V_0}{N-1}+M^p\sob{\bar V_0}{N}+M^p\sob{\bar V_1}{N-1}< E.
	\end{equation}
	for $p=1$. The restriction on $\bar{U}$ is the natural energy at $N$ regularity, while the one for $V$ is suppressed by a factor of $M^p$, indicating that $\bar{V}$ is in its ground state. All $p>0$ would force the heavy field to be small, but we argue below that $p=1$ is a natural/physical choice. Solving \cref{original equation of motion} with \cref{original initial data} is not a perturbative problem as it is not a small data problem for $\bar{U}$, nor is it amenable to energy conservation methods due to the lack of a positive definite potential. 
	\begin{remark}
		The problem considered in \cite{reall_effective_2022} has a good potential energy functional. Therefore, due to subcritical behaviour ($\dot{H}^{3/4}\subset L^4$) one can try to show global existence using Strichartz estimates in a rough norm. This however does not shed light on the long time behaviour of the solution in higher regularity norms, which is the aim of this study. In particular, one cannot rule out weak turbulence behaviour with standard tools.
	\end{remark}
	To introduce a small parameter, we follow the change of coordinates motivated by EFT analysis in \cite{reall_effective_2022}: setting ${V}=\bar V+\frac{\bar U^2}{2M^2}$, we get
	\begin{equation}\label{v modified}
		\begin{gathered}
			(\Box-1)\bar U=\bar U{V}-\frac{\bar U^3}{2M^2}\\
			(\Box-M^2){V}=\frac{\bar U^2{V}-\frac{\bar U^4}{2M^2}+\partial \bar U\partial \bar U}{M^2}\\
			\bar U(0)=\bar U_0, \quad\partial_t\bar U(0)=\bar U_1,\quad {V}(0)={V}_0,\quad \partial_t{V}(0)={V}_1.
		\end{gathered}
	\end{equation}
	For this equation, we expect that $V$ is a better approximation to the true heavy degree freedom than $\bar{V}$. The nonlinear force exerted by the light field on $V$ is suppressed by factors of $M$, so, when in its ground state, $V$ will be smaller (in powers of $M$) than $\bar V$ implying that $p=1$ is optimal in \cref{original initial data}, to be consistent with the light field assumption. We consider this problem with initial data 
	\begin{gather}\label{v modified initial data}
		\sob{\bar U_0}{N}+\sob{\bar U_1}{N-1}+M^3\sob{{V}_0}{N-1}+M^2\sob{{V}_0}{N}+M^2\sob{{V}_1}{N-1}< E,
	\end{gather}
	which is \underline{not} equivalent to \eqref{original initial data}, but is still an EFT data as considered in \cite{reall_effective_2022}.	Observe, that in \eqref{v modified} at time $t=0$, with initial data \eqref{v modified initial data}, the nonlinear terms are initially suppressed by many factors of $M$. In particular, for $U=\frac{\bar U}{M}$ we have a small data problem in the high $M$:
	\begin{equation}\label{eom}
		\begin{gathered}
			(\Box-1) U=U{V}- U^3/2\\
			(\Box-M^2)V= U^2V-U^4/2+\partial U\partial U\\
			U(0)=U_0,\quad \partial_t U(0)= U_1,\quad V(0)=V_0,\quad\partial_tV(1)=V_1,\\
		\end{gathered}
	\end{equation}
	with initial data satisfying
	\begin{equation}\label{initial data norm}
		M\sob{U_0}{N}+M\sob{U_1}{N-1}+M^3\sob{V_0}{N-1}+M^2\sob{V_0}{N}+M^2\sob{V_1}{N-1}<E.
	\end{equation}
	The equation to solve became a small data problem for fixed $E$ in the large $M$ limit, however standard results, such as \cite{klainerman_global_1985} \cite{ionescu_global_2014}, \cite{deng_multispeed_2018} do not straightforwardly apply, since the smallness of the data ($M$) also changes the equations under consideration. Our main result is
	
	\begin{theorem*}[\cref{global existence}]
		For $E$ fixed and $M$ sufficiently large depending only on $E$, \cref{eom} has a unique global solution if the initial data satisfies \cref{initial data norm}.
	\end{theorem*}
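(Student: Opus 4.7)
The plan is to run a bootstrap argument in the spirit of the Klainerman vector-field method (equivalently, the hyperboloidal foliation of LeFloch--Ma \cite{lefloch_hyperboloidal_2015}), exploiting the fact that \cref{eom} is a small-data problem for a two-speed Klein--Gordon system with masses $1$ and $M$: the natural a priori sizes $\|U\|_{H^N}\lesssim M^{-1}$ and $\|V\|_{H^N}+M^{-1}\|V\|_{H^{N+1}}\lesssim M^{-2}$ are exactly what \cref{initial data norm} dictates at time zero, and the goal is to propagate these sizes for all positive time.

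First I would record a local well-posedness statement for \cref{eom} at Sobolev regularity $H^N$ for $N$ sufficiently large: the system is semilinear, the five nonlinearities $UV$, $U^3$, $U^2V$, $U^4$, $\partial U\,\partial U$ are tame, and one obtains a maximal existence time characterised by an $H^N$-continuation criterion. On a subinterval $[0,T]$ I would propagate two coupled families of a priori bounds: (i) high-regularity energy bounds for the $\Gamma^\alpha$-commuted equations up to $|\alpha|\le N$, where $\Gamma$ ranges over the Klainerman collection $\{\partial_\mu,\,L_i=x_i\partial_t+t\partial_i,\,\Omega_{ij}\}$ (the scaling vector field $S$ is excluded because $[S,\Box-m^2]\ne 0$), of sizes $M^{-1}$ for $U$ and $M^{-2}$ for the mass-$M$-weighted $V$-energy; and (ii) pointwise dispersive bounds $|\Gamma^\alpha U|\lesssim M^{-1}\langle t\rangle^{-3/2}$, $|\Gamma^\alpha V|\lesssim M^{-2}\langle t\rangle^{-3/2}$ at order $|\alpha|\le N/2$, derived from (i) via the Klainerman--Sobolev inequality on the hyperboloidal slices $\mathcal{H}_s=\{t^2-|x|^2=s^2,\ t>0\}$. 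Since $[\Gamma,\Box-m^2]=0$ for every $\Gamma$ in the collection and every mass $m$, no $M$-weight leaks in through commutators.

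To close the bootstrap, I would run the hyperboloidal energy identity on each commuted equation. For the $U$-equation, placing one factor of the source in $L^\infty$ via (ii) yields $\|UV\|_{L^2}+\|U^3\|_{L^2}\lesssim M^{-3}\langle t\rangle^{-3/2}$, which integrates in time to a contribution much smaller than the initial $M^{-1}$. For the $V$-equation, the borderline term is the quadratic source $\partial U\cdot \partial U$: placing the top-order factor in $L^2$ (cost $M^{-1}$ by (i)) and the other in $L^\infty$ (cost $M^{-1}\langle t\rangle^{-3/2}$ by (ii)), one obtains $\|\Gamma^\alpha(\partial U\,\partial U)\|_{L^2}\lesssim M^{-2}\langle t\rangle^{-3/2}$, giving a time-integrable contribution of order $M^{-2}$ to the mass-weighted $V$-energy, consistent with the bootstrap. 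The cubic and higher corrections $U^2V$ and $U^4$ carry strictly more factors of $M^{-1}$ and are therefore harmless.

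The hard part, I expect, will be the interplay between the two distinct mass scales: the hyperboloidal $s/t$-weighted energy and Klainerman--Sobolev are tailored to a single-mass Klein--Gordon equation, and one must verify that the universal dispersive rate $\langle t\rangle^{-3/2}$ holds uniformly in $M$ for the heavy field, or that only weaker, uniform, bounds are actually needed to close the argument. A clean alternative would be a normal-form transformation $\widetilde V=V+\Psi(\partial U,\partial U)/M^2$, exploiting the non-resonance between the source frequency (concentrated near $0$ and $\pm 2$) and the heavy mass $M$, which replaces the borderline quadratic source by a harmless cubic one at the cost of a smooth $O(M^{-2})$ perturbation; but the direct bootstrap outlined above should suffice. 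Once the estimates close uniformly in $T$ for $M$ above a threshold $M_0(E)$, the $H^N$-continuation criterion yields the global solution, and uniqueness follows from the local theory.
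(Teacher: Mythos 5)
Your proposal takes a genuinely different route --- Klainerman vector fields/hyperboloidal foliation rather than the paper's space--time resonance method --- but it runs into precisely the obstruction the paper flags (in the footnote following \cref{problem description}) as the reason to avoid the vector-field method here. The issue is not the commutator $[\Gamma,\Box-m^2]$, which you correctly note vanishes; it is that commuting with boosts $\Gamma=x_i\partial_t-t\partial_{x_i}$ forces you to control iterated time derivatives of the initial data, and for the heavy field each pair of time derivatives costs a factor of $M^2$ through the equation $\partial_t^2 V=\Delta V-M^2 V+\ldots$. The hyperboloidal energy method (as in \cite{lefloch_hyperboloidal_2015}) requires control of $\|\Gamma^5\cdot\|_{L^2}$ on data, and at $t=0$ terms like $\Gamma^4 V$ involve $\partial_t^4 V\sim M^4 V_0+\ldots$. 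So to keep the commuted $V$-energies at the size your bootstrap needs, you would have to assume the $V$-data is suppressed by several extra powers of $M$ beyond what \cref{initial data norm} provides. You would either need to demonstrate that these terms conspire to cancel (they don't, in general), or restrict to strictly smaller data than the theorem allows, which changes the statement being proved.

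There is a second quantitative problem: the heavy field does not decay at rate $\langle t\rangle^{-3/2}$. The Klein--Gordon dispersive estimate for mass $M$ gives $\|e^{it\jpns{D}_M}f\|_{L^\infty}\lesssim\jpns{t/M}^{-3/2}\|f\|_{W^{3,1}}$ (\cref{decay estimate}), so in the regime $1\ll t\ll M$ the heavy field barely decays at all. Your estimate $\|\Gamma^\alpha(\partial U\,\partial U)\|_{L^2}\lesssim M^{-2}\langle t\rangle^{-3/2}$ is fine, but when you feed the resulting $V$-amplitude back into the $U$-equation through $UV$, you get a source whose time-integral in the regime $t\lesssim M$ is $O(M^{-2}\cdot M)=O(M^{-1})$, and the constant you can extract is not obviously better than the bootstrap constant --- this is precisely the marginal case. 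You flag this yourself as the ``hard part,'' but the normal-form escape you sketch (which is close in spirit to what the paper does: the space--time resonance decomposition is the modern quantitative form of Shatah's normal-form trick, and the nonresonance of the sum-frequency $\jpns{\nu}+\jpns{\rho-\nu}$ against $\jpns{\rho}_M$ when $M$ is large is exactly what the paper exploits in \cref{frequency analysis}) is in fact the load-bearing step, not an optional alternative. The paper also cannot simply take $L^\infty$ of one factor in the $U$-equation and integrate: the dominant contribution comes near the $\phi_u=0$ resonance set, which is why the paper introduces the cutoff pair $\chi_S,\chi_T$ (\cref{chi lemma}), proves $M$-uniform symbol bounds (\cref{symbolic estimates}), and establishes the custom paraproduct estimate \cref{multiplier lemma} to control derivative losses on the high-frequency factor. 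Your outline is missing a replacement for all of this machinery, and the direct vector-field route would not supply one without strengthening the hypotheses.
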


	Assuming linear decay, the right hand side of \eqref{eom} is bounded by ($t^{-1.5}$) in energy norm ($L^2$), although global existence is not yet immediate, since the massive field decays on timescales $t\sim M$ (\cref{decay estimate}). From the techniques presented in \ref{previous works}, we will use the space-time resonance method, as one can be very economical with the number of time derivatives taken \footnote{This is in contrast with the vector field method (hyperboloidal foliation for Klein-Gordon fields), for which the boosts $\Gamma:=x_i\partial_t-t\partial_{x_i}$ are required and would result in many factors of $M$ appearing (one needs in particular $\norm{\Gamma^5\cdot}_{L^2}$ control on initial data \cite{lefloch_hyperboloidal_2015}). } , and can obtain an explicit perturbative expansion of the solution which helps to prove scattering results.
	\begin{remark}
		The long term decay estimates proved in this paper (see \ref{expected long term behaviour}) are not sharp. The reason is that these non-sharp estimates are easier to handle using global norms (such as in \cite{germain_global_2011}), but it is likely that one could obtain sharp asymptotics using localisation (as in \cite{ionescu_global_2014}).
	\end{remark}
	
	\subsection{Linear scattering}
	Once we know that the solutions are global, we may find out what the scattering map looks like in the Lax-Phillips sense (see \cite{reed_iii_1979}). That is, given initial data $U_0,U_1,V_0,V_1\in H^n$ for \cref{eom} we can look for $\tilde{u},\tilde{v}$ solutions of the free system with initial data (called scattered state for the original problem)
	\begin{equation*}
		\begin{gathered}
			(\Box-1)\tilde{u}=0\\
			(\Box-M^2)\tilde{v}=0\\
			\tilde{u}(0)=\tilde{u}_0,\quad\partial_t\tilde{u}(0)=\tilde{u}_1,\quad\tilde{v}(0)=\tilde{v}_0,\quad\partial_t\tilde{v}(0)=\tilde{v}_1
		\end{gathered}
	\end{equation*}
	such that $\norm{U-\tilde{u}}_{H^m},\norm{V-\tilde{v}}_{H^m}\to0$ as $t\to\infty$ for some $m$. The advantage of space-time resonance method compared to others (such as the hyperboloidal foliation) is that the profiles exactly capture the above behaviour, indeed if the profiles converges their limit is given in terms of  $\tilde{u}_0,\tilde{u}_1,\tilde{v}_0,\tilde{v}_1$.
	\begin{theorem}[\cref{uniqueness of scattering state} and \cref{UV is EFT} for exact statements]\label{scattering lemma}
		Fix a solution $U,V$ of \cref{eom} with initial data satisfying EFT conditions to order $2n$ (\cref{eft condition}), and a scattering EFT\footnote{meaning that the error to satisfy the equation is integrable in time.} solution (\cref{eft sol definition}) with same data to  
		\begin{equation*}
			\begin{aligned}
				(\Box-1)u=\sum_{i=1}^{2n}\frac{F_i[u]}{M^{2i}}
			\end{aligned}
		\end{equation*}
		with scattered states $f$ and $\tilde{f}$ respectively. Here, $F_i$ are derived from \cref{eom} (see \cref{eft eom}). For $M$ sufficiently large $\norm{f-\tilde{f}}_{H^{k}}\lesssim\frac{1}{M^{2n+2}}$ for some regularity $k$.
	\end{theorem}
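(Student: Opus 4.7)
My plan is to compare the profiles of the UV solution $U$ and the EFT solution $u$ directly, exploiting the fact that the EFT condition to order $2n$ forces $U$ to solve the same EFT equation as $u$ modulo a residue of size $M^{-(2n+2)}$. More precisely, substituting the formal expansion for $V$ (in terms of $U$ and its derivatives) into the first equation of \cref{eom} produces
\begin{equation*}
(\Box - 1)U = \sum_{i=1}^{2n}\frac{F_i[U]}{M^{2i}} + R_M,
\end{equation*}
where $R_M$ depends polynomially on $U$, $V$ and their derivatives; the $L^1_t H^k_x$ bound $\int_0^\infty \|R_M(s)\|_{H^k}\, ds \lesssim M^{-(2n+2)}$ follows from the global decay estimates of \cref{global existence} together with the $M$-suppression of $V$ encoded in \cref{initial data norm}. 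Subtracting the EFT equation defining $u$ (with zero Cauchy data for $U - u$ by the matching hypothesis) gives
\begin{equation*}
(\Box - 1)(U - u) = \sum_{i=1}^{2n}\frac{F_i[U] - F_i[u]}{M^{2i}} + R_M.
\end{equation*}

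Next I would rewrite this in the Klein-Gordon profile formalism already set up for \cref{uniqueness of scattering state}: the scattered states $f$ and $\tilde{f}$ are the $t\to\infty$ limits of the profiles $g_U$ and $g_u$, and their difference satisfies schematically
\begin{equation*}
g_U(t) - g_u(t) = \int_0^t e^{is\jpns{D}}\jpns{D}^{-1}\Bigl(\sum_{i=1}^{2n}\frac{F_i[U] - F_i[u]}{M^{2i}} + R_M\Bigr)(s)\, ds.
\end{equation*}
Each $F_i[U] - F_i[u]$ vanishes on the diagonal and so factors as $U - u$ times a multilinear symbol in $U$, $u$ and their derivatives; combined with the $\jpns{t}^{-3/2}$ Klein-Gordon decay of $U$ and $u$ inherited from \cref{global existence}, this produces a Volterra-type inequality with an $L^1_t$ kernel. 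A bootstrap then yields $\|g_U(t) - g_u(t)\|_{H^k}\lesssim M^{-(2n+2)}$ uniformly in $t$, and letting $t\to\infty$ delivers the claimed bound on $f - \tilde{f}$.

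The principal obstacle I anticipate is the careful accounting of $M$-powers in $R_M$: the raw residue still involves $V$ and its derivatives, and one must verify that after substituting the order-$2n$ EFT expansion for $V$ (inductively generated by repeatedly differentiating the second equation of \cref{eom} and applying $(\Box - M^2)^{-1}$ as a formal Neumann series in $\Box/M^2$) every surviving term carries the full $M^{-(2n+2)}$ suppression in a norm with enough derivatives to match the $H^k$ norm used in Gronwall. A secondary technical point is that the constants in both the decay bounds and the bootstrap must be $M$-independent so that the final rate is not degraded; this follows from the uniform-in-$M$ character of the estimates proved in \cref{global existence}, but needs explicit verification when combined with the EFT reconstruction.
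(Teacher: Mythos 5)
Your proposal is correct and follows essentially the same route as the paper; the two key technical steps you identify are exactly those the paper uses. The paper simply packages them differently: it first proves \cref{UV is EFT}, showing the UV light field is itself a \emph{scattering EFT solution} of \cref{eft for U} (the residual being $M^{2n+3}V^m M U$, whose integrability comes from the iterated change of variables $V^m=V^{m-1}-F_m[U]/M^{2m}$ and the decay bounds in \cref{long term behaviour of Vm}), and then proves \cref{uniqueness of scattering state}, a Gronwall argument on the profile difference that applies to \emph{any} two scattering EFT solutions of the same equation with the same data. Your direct comparison of the profiles of $U$ and $u$ collapses these two lemmas into one difference equation and one Volterra-type estimate, which is mathematically the same content; what the paper's abstraction buys is that the uniqueness-of-scattered-states lemma becomes a statement purely about the EFT hierarchy, reusable independently of the UV origin. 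The ``principal obstacle'' you flag — tracking the $M$-suppression of $R_M$ after substituting the Neumann-series expansion of $V$ — is precisely the content of \cref{eft initial data lemma} and \cref{long term behaviour of Vm}, so your instinct that this is where the real work lies is right; you would also need to note, as the paper does in \cref{eft sol definition} and \cref{global existence for higher derivatives}, that higher time derivatives of $U$ must be controlled separately (via progressively imposed EFT conditions on the data) since $F_i$ contains $\partial_t^j U$ for $j>1$, which the bootstrap for \cref{eom} alone does not bound.
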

	
	\begin{remark}
		While this result gives some information about the long time behaviour and indeed it is the one that is called simply scattering in the literature often, it doesn't tell us everything. If we modify the linear part of the light field to a wave equation ($\Box-1\to\Box$), we know that a natural question to ask is the behaviour of the radiation field at null infinity ($\mathcal{I}$)\footnote{See \cite{dafermos_scattering_2018} for this alternative view on scattering theory}. However, information about the behaviour along $\mathcal{I}$ is hard to recover from this foliation as it corresponds to a single slice $t=\infty$.
	\end{remark}

%	\begin{remark}
%		The above theorem says that an EFT solution is close to a real solution globally in a sense, but this can be misleading. 
%		Consider the Lagrangian $\mathcal{L}=\partial \phi\cdot\partial\phi+\partial\psi\cdot\partial\psi+M^2\psi^2+2\psi\phi^2+\psi^2\phi^6$ in $\R^{2+1}$, which has vacuum away from $(0,0)$. At first order in EFT expansion, it gives ${\Box\phi=-2\phi^3/M^2+\text{error}}$. Note the stark difference between the two equations: the original Lagrangian had a positive definite potential and is energy subcritical in $\R^{2+1}$, therefore one expects global solutions (by low regularity theory \cite{tao_nonlinear_2006}), while if the EFT solved without an error term, it has no global solution (Strauss conjecture\cite{glassey_finite-time_1981}). Therefore, the scattering (linear at infinity) behaviour of the UV equation is important.
%	\end{remark}
%	
	The rest of the paper is structured as follows. In \cref{preliminaries} we set up the necessary notation for the method used and give a lightning overview on how to implement it. Furthermore, we state and prove the main theorem conditionally on technical lemmas. In \cref{frequency analysis}, we provide detailed description of the resonances and prove key estimates that allow the space-time resonance method to work. \cref{a priori bound section} contains all the bounds for the nonlinear terms, in particular a strengthening of the bootstrap assumptions. There are no new conceptual estimates in this section, but all non-linearities must be bounded to improve on the constants. Finally in \cref{scattering section} we give some new global EFT definitions and prove that UV solutions (\cref{eom}) are indeed EFTs if the right initial data is chosen. Furthermore, we give a certain uniqueness result in this class.
	
	\textbf{Acknowledgement}:The author would like to thank Claude Warnick for proposing the topic of this paper and for the many helpful discussions on both the technical and conceptual parts of the work. This project was founded by EPSRC.
	
	\section{Preliminaries}\label{preliminaries}
	\subsection{Space time resonance method}\label{space time method}
	To set up the method, let's define the half waves and the profiles for the system in \eqref{eom}
	\begin{equation}\label{resonance transformation}
		\begin{gathered}[c]
			u_\pm=(\partial_t\pm i\jpns{D})U\\
			l_\pm=e^{\mp i\jpns{D}t}u_\pm\\
		\end{gathered}
		\qquad\qquad
		\begin{gathered}[c]
			v_\pm=(\partial_t\pm i\jpns{D}_M)V\\
			h_\pm=e^{\mp i\jpns{D}_Mt}v_\pm\\
		\end{gathered}
	\end{equation}
	where $\jpns{a}_M^2=\abs{a}^2+M^2, \jpns{a}=\jpns{a}_1$, $D_{x_j}=i\partial_{x_j}$ \footnote{for the theory of pseudo differential operators see \cite{hormander_lectures_1997}, and their usage closest to the present work in eg.\cite{ionescu_global_2014}}. This definition makes sense for functions in the Schwartz class ($\mathcal{S}$) and may be extended by density, so we have $U,\dot{U}\to (u_-,u_+)$ is a continuous map $H^N\times H^{N-1}\to H^{N-1}\times H^{N-1}$ (similarly for $V$). Therefore, from now on, we will assume that all functions are of Schwartz class when proving estimates. The initial data condition \cref{initial data norm} maps to
	\begin{equation}
		M\sob{v_{0,\pm}}{N-1}+\sob{u_{0,\pm}}{N-1}<E/M.
	\end{equation}
	
	Note, that $l_\pm,h_\pm$ will evolve purely according to the nonlinearity:
	\begin{gather}\label{evolution of profiles}
		\partial_t l_\pm=-e^{\mp i\jpns{D}t}(UV-U^3/2)\\
		\partial_t h_\pm=-e^{\mp i\jpns{D}_Mt}(U^2V-U^4/2+\partial U\partial U),
	\end{gather}
	that is, they represent the Duhamel/nonlinear part of the solution.
	
	Using \cref{resonance transformation}, we may express the nonlinearities in terms of the new variables as
	\begin{gather*}
		UV=\frac{e^{it\jpns{D}}l_+-e^{-it\jpns{D}}l_-}{2i\jpns{D}}\frac{e^{it\jpns{D}_M}h_+-e^{-it\jpns{D}_M}h_-}{2i\jpns{D}_M}\\
		\partial U\partial V=-\big(\frac{u_++u_-}{2}\big)^2+\nabla_j\frac{u_+-u_-}{2i\jpns{D}}\nabla_j\frac{u_+-u_-}{2i\jpns{D}}.
	\end{gather*}
	Therefore the quadratic contribution to the evolution of the profiles can be expressed in terms of the profiles
	\begin{equation}\label{partial of l}
		\begin{gathered}
			\partial_t \hat l_{+(\text{quadratic})}(\rho):=-e^{- i\jpns{D}t}UV=-\sum_{\epsilon_1,\epsilon_2=\pm}\int\d\nu e^{it\phi^{l,l,h}_{+,\epsilon_1,\epsilon_2}}a^{l,l,h}_{+,\epsilon_1,\epsilon_2}l_{\epsilon_1}(\nu)h_{\epsilon_2}(\rho-\nu)
		\end{gathered}
	\end{equation}
	where $\epsilon_1,\epsilon_2\in\{\pm1\}$, $\phi^{l,l,h}_{+,\epsilon_1,\epsilon_2}=-\jpns{\rho}+\epsilon_1\jpns{\nu}+\epsilon_2\jpns{\rho-\nu}_M$ is a phase and $a^{l,l,h}_{+,\epsilon_1,\epsilon_2}=\frac{1}{4}\frac{\epsilon_1\epsilon_2}{\jpns{\rho-\nu}_M\jpns{\nu}}$ acts as a multiplier. And similarly for the heavy field
	\begin{gather}\label{partial of h}
		\partial_t \hat h_{+(\text{quadratic})}(\rho)=e^{-i\jpns{D}t}(\widehat{\partial_t u\partial_t u}-\widehat{\partial_i u\partial_i u})=\sum_{\epsilon_1,\epsilon_2=\pm}\int\d\nu e^{it\phi^{h,l,l}_{+,\epsilon_1,\epsilon_2}}a^{h,l,l}_{+,\epsilon_1,\epsilon_2}l_{\epsilon_1}(\nu)h_{\epsilon_2}(\rho-\nu)
	\end{gather}
	where $\phi^{h,l,l}_{+,\epsilon_1,\epsilon_2}=-\jpns{\rho}_M+\epsilon_1\jpns{\nu}+\epsilon_2\jpns{\rho-\nu}$ and $a^{h,l,l}_{+,\epsilon_1,\epsilon_2}=\frac{1}{4}\Big(1-\epsilon_1\epsilon_2\frac{\nu\cdot(\rho-\nu)}{\jpns{\nu}\jpns{\rho-\nu}}\Big)$.
	From now on, to ease notation, we will not write out $\epsilon$ indices, multipliers ($a$) or summations. Therefore in subsequent calculation the form of $\phi$ and $a$ will be implicitly assumed as will the propagation of $l$ and $h$. This should not be confusing, as such terms are trivial to recover once all the appropriate $\epsilon$ are inserted. Similarly, since $a$ is symbolic, ie. it has good decay properties $\abs{\partial^\alpha a}<(\abs{\rho}+\abs{\nu})^{-\abs{\alpha}}$, it plays no obstruction against using multiplier theory (Coifman-Meyer theory), and we do not want to use any gain of derivatives or other structure that $a$ may provide, thus we will drop them for sake of simplicity. The frequencies $\phi$ will nevertheless play an important role and for sake of concreteness, we choose $\phi_u=-\jpns{\rho}+\jpns{\nu}_M-\jpns{\rho-\nu}$ and $\phi_v=-\jpns{\rho}_M+\jpns{\rho-\nu}+\jpns{\nu}$, since as explained in \cref{frequency analysis} all other choices of sign can be treated by the same techniques. Similarly, we'll focus on the quadratic part of the equation only, as the higher order terms have more decay thus easier to treat. In particular, one can improve the bootstrap estimates (see below) by only using \cref{integral estimate} and sidestepping the frequency analysis.
	
	We will consider the quantities
	\begin{equation}
		\begin{gathered}
			\hat{L}=\int_{1}^{t}\d s \int \d \nu e^{is\phi}\hat{l}(\nu)\hat{h}(\rho-\nu)\\
			\hat{H}=\int_{1}^{t}\d s \int \d \nu e^{is\phi}\hat{l}(\nu)\hat{l}(\rho-\nu)
		\end{gathered}
	\end{equation}
	which are the integrated nonlinear change of the profiles ($l,h$) after time $1$ capturing the perturbative effect of the nonlinearity. The reason for bounding the integral that start from 1 is only to avoid $s<1$ case, which are treated separately in all cases using a local existence result \cref{local existence}.

	The method of space time resonance builds on splitting $1(\rho,\nu)=\chi_S(\rho,\nu)+\chi_T(\rho,\nu)$, $\chi_{S,T}\in C^\infty(\R^{2n}\to[0,1])$, such that $\abs{\phi}|_{\supp\chi_S}>0$ and $\abs{\nabla_\nu\phi}|_{\supp\chi_T}>0$. Then, we may split the above integral expression
	\begin{equation}
		\begin{gathered}
			\intphi \hat{l}(\nu)\hat{h}(\rho-\nu)=\intphi (\chi_S+\chi_T)\hat{l}(\nu)\hat{h}(\rho-\nu)=:I+II.
		\end{gathered}
	\end{equation}
	\begin{remark}[Nomenclature]
		We call the part $\phi=0$ the time resonant set, and $\chi_S$ localises away from this set, in particular $\chi_S|_{\{\nabla_\nu\phi=0\}}=1$ so it has maximum value around the space resonant set ($\nabla_\nu\phi=0$). Therefore, we will refer to I (II) as space resonant (time resonant) part.
	\end{remark}
	
	Away from times resonances (ie. potentially near space resonant part), we may integrate by parts with respect to $s$ using $\frac{1}{i\phi}\partial_s e^{is\phi}=e^{is\phi}$:
	\begin{equation}
		\begin{gathered}
			I=\int_1^t\d s T_{\chi_S e^{is\phi}}(l,h)\\=e^{-it\jpns{D}}T_{m_1}(u,v)|_t-e^{-i\jpns{D}}T_{m_1}(u,v)|_1-\int_1^t\d s \Big(T_{m_1e^{is\phi}}(l,\partial_sh)+T_{m_1e^{is\phi}}(\partial_sl,h)\Big)
		\end{gathered}
	\end{equation}
	where $m_1=\frac{\chi_S}{i\phi}$ and 
	\begin{equation*}
		\begin{aligned}
			T_{m(\rho,\nu)}(f,g)=\mathcal{F}^{-1}\int\d\nu m(\rho,\nu)\hat{f}(\nu)\hat{g}(\rho-\nu)\\
			\mathcal{F}f(\rho)=\hat{f}(\rho)=\int\d x e^{ix\cdot\rho}f(x).
		\end{aligned}
	\end{equation*}
	With this, we split up $I$ into boundary terms, that are not integrated in time, and a bulk part which is cubic thanks to purely nonlinear evolution of profiles \cref{evolution of profiles}. 
	
	Away from space resonance, we may integrate by parts with respect to $\nu$ using $\frac{\nabla_\nu\phi}{s\abs{\nabla_\nu\phi}^2}\nabla_\nu e^{is\phi}=ie^{is\phi}$. When $\nabla_\nu$ acts on a profile, we may use $\nabla_\nu\hat{f}=\widehat{ixf}$ to write the result compactly as
	
	\begin{equation}
		\begin{gathered}
			II=-\int_1^t\d s \frac{1}{s}\Big(T_{e^{is\phi}\nabla_\nu\cdot m_2}(l,h)+T_{e^{is\phi}m_2}(ixl,h)+T_{e^{is\phi}m_2}(l,ixh)\Big),
		\end{gathered}
	\end{equation}
	where $m_2=\frac{\chi_T\nabla_\nu\phi}{\abs{\nabla_\nu\phi}^2}$ is a vector and is contracted with $x$ where it appears. Since our estimate will not use the sign of $x$ and are not meant to be sharp, we will drop the $i$ factor, and treat as implicit. We will prove symbolic estimates for $m_2$ which will pass down to $\nabla_\nu m_2$ and furthermore, $l,h$ have better decay behaviour than $xl,xh$ we will not worry about the first term, and will only bound the rest.
	
	To state the main estimates used, let's introduce the notation
	\begin{equation*}
		\begin{aligned}
			A\lesssim_{a,b,c...} B
		\end{aligned}
	\end{equation*}
	to mean the existence of a universal constant $C_{a,b,c...}$ depending only on $a,b,c...$, such that $A\leq C_{a,b,c...}B$. The tools to obtain bounds on $L$ and $H$ are the decay estimates (\cref{decay estimate}),
	\begin{equation}
		\norm{e^{i\jpns{D}_Mt}f}_{L^p}\lesssim_{p}\jpns{\frac{t}{M}}^{3(1/p-1/2)}\norm{f}_{W^{4(1/2-1/p),p^\prime}}
	\end{equation}
	and the product estimates (\cref{multiplier lemma}):
	\begin{equation}
		\begin{gathered}
			\norm{T_{\frac{\chi_S}{\phi}}(f,g)}_{W^{k,r}}\lesssim_{k,r,p,q,\bar{p},\bar{q}}\norm{f}_{W^{\mathfrak{a},p}}\norm{g}_{W^{k,q}}+\norm{f}_{W^{k,\bar{p}}}\norm{g}_{W^{\mathfrak{a},\bar{q}}}\\
			\norm{T_{\frac{\chi_T\nabla_\nu\phi}{\abs{\nabla_\nu\phi}^2}{\phi}}(f,g)}_{W^{k,r}}\lesssim_{k,r,p,q,\bar{p},\bar{q}}\norm{f}_{W^{\mathfrak{a},p}}\norm{g}_{W^{k,q}}+\norm{f}_{W^{k,\bar{p}}}\norm{g}_{W^{\mathfrak{a},\bar{q}}}
		\end{gathered}
	\end{equation}
	for any $f,g\in\mathcal{S}$, $k\geq0$, $1/p+1/p^\prime=1$ and $\frac{1}{r}=\frac{1}{p}+\frac{1}{q}=\frac{1}{\bar{p}}+\frac{1}{\bar{q}}$ with $\mathfrak{a}$ some fixed number. As usual, these bounds extend to the appropriate function spaces using density.
	
	\subsection{Expected long term behaviour, norms, bootstrap assumption}\label{expected long term behaviour}
	We will prove boundedness of high regularity energies ($L^2$ based) and decay estimates in rougher norms. To motivate the norms used, let's consider the effect of the non-linearity on each of these.
	
	Concerning decay, for the linear system we have 
	\begin{equation}
		\begin{gathered}
			\norm{u}_{W^{k,p}}\lesssim t^{3(\frac{1}{2}-\frac{1}{p})}\text{data}
		\end{gathered}
	\end{equation}
	for $p\geq2$. This will be too much to prove for all possible values of $p$ in the nonlinear setting. To see why, consider a time resonant integral with $p=\infty$, where we are aiming for a $t^{-1.5}$ bound on
	\begin{equation}
		\begin{gathered}
			I=\norm{\int\d s e^{is\jpns{D}}\frac{1}{s}T_{\frac{\chi_T\nabla_\nu\phi}{\abs{\nabla_\nu\phi}}}(xl,h).
			}_{W^{k,\infty}},
		\end{gathered}
	\end{equation}
	but using Minkowski inequality, and the decay \cref{decay estimate}, we only get
	\begin{equation}
		\begin{gathered}
			I\lesssim\int\frac{1}{s(t-s)^{1.5}}\norm{T_{\frac{\chi_T\nabla_\nu\phi}{\abs{\nabla_\nu\phi}}}(xl,h)}_{W^{k^\prime,1}},
		\end{gathered}
	\end{equation}
	for some $k^\prime>k$. Since the $W^{k^\prime,1}$ norm of a quadratic expression is not expected to decay, using the integrals from \cref{integral estimate}, the best expected estimate is $t^{-1}$. Tracing the numerology in above gives necessary condition $p\leq12$.
	
	For boundedness of energy ($H^N$ norms), we cannot use integration by parts arguments, because we would lose derivatives. Considering a typical forcing such as $u^2$ (dropping multipliers momentarily), we simply use Minkowski and Holder inequality
	\begin{equation}
		\norm{\intt uu}_{H^N}\lesssim\int_1^t\norm{u}_{H^N}\norm{u}_{L^\infty}.
	\end{equation}
	This integral will converge if $\norm{u}_{L^\infty}$ is integrable, so via Sobolev embedding, we need that $\norm{u}_{W^{k,p}}$ is integrable for at least some $p$, yielding $p>6$. In summary, we define the following global norms on $\R_t\times\R^3_x$
	\begin{equation}
		\begin{gathered}
			\norm{u,v}_Z=\norm{\jpns{t}^{1+3\delta}u}_{L^\infty_tW_x^{k,(\frac{1}{6}-\delta)^{-1}}}+\norm{\jpns{t/M}^{1+3\delta}\sqrt{M}u}_{L^\infty_tW_x^{k,(\frac{1}{6}-\delta)^{-1}}}\\
			\norm{l,h}_S=\norm{xl}_{L^\infty_tH^{k+3/2}_x}+\sqrt{M}\norm{xh}_{L^\infty_tH^{k+3/2}_x}\\
			\norm{u,v}_N=\norm{u}_{L^\infty_tH^{N}_x}+\sqrt{M}\norm{v}_{L^\infty_tH^{N}_x}\\
			\norm{u,v}_X=\norm{u,v}_Z+\norm{l(u),h(v)}_S+\norm{u,v}_N.
		\end{gathered}
	\end{equation}
	Furthermore, let $\norm{\cdot}_{X(I)}$ refer to the above norm restricted to $I\times\R^3$ for an interval $I\subset\R$. These are very similar to those found in \cite{germain_global_2011}, in particular it contains a mixture of bounds on the solution and the profile. We also define a bootstrap assumption
	\begin{equation}\label{bootstrap assumption}
		\norm{u,v}_{X_{I}}\leq 10 \frac{E}{M}	
	\end{equation}
	for 
	\begin{equation}\label{N k s condition}
		\begin{gathered}
			N>k+6,k>s+2,s>\mathfrak{a},
		\end{gathered}
	\end{equation}
	$\delta=1/14$ (eg. $N=70,k=50$). 
	
	\begin{remark}[Global vs dyadic norms]
		In this work, we choose to use global norms as above compared to dyadic norms such as in \cite{ionescu_global_2014} because we hope this makes the rest of the paper easier to follow. This however leads to an extra difficulty that we have to overcome, because we cannot use the fine structure of the inputs in all of the estimates, instead we need to have good derivative losses (\cref{multiplier lemma} replacing \cref{Coifman-Meyer}) built into global estimates. A dyadic approach could therefore significantly improve $N,k,s$, but this is not the point of the present work.
	\end{remark}
	
	Observe, that the initial data \eqref{initial data norm}, and small time existence (\cref{local existence}) yields that given $M$ sufficiently high, there's a solution on $I\supset[0,1]$ time interval. However, to get  \eqref{bootstrap assumption} to hold, we need weights on the initial data, which of course can be neglected if the data is supported in some finite ball. Thus, we introduce the norm on initial data that we will use in the global existence theorem:
	\begin{equation}\label{weighted initial data norm}
		\sob{u_0}{N}+M\sob{v_0}{N}+\sob{xu_0}{k+3/2}+M\sob{xv_0}{k+3/2}<E/M.
	\end{equation}
	
	Observe the discrepancy between powers of $M$ in the bootstrap ($M^{1.5}\norm{v}_{H^N}$) and initial data ($M^{2}\norm{v}_{H^N}$). Indeed, one may lower the power of $M$ for the heavy field in the initial data to $1.5$, but the bootstrap assumption cannot be strengthened up to the initial data power ($M^2$) given in \ref{weighted initial data norm} using our approach, since the estimates in \cref{energy} would not close.
	
	\subsection{Global solutions}
	To prove the main theorem, we will need a local existence result.
	\begin{theorem}[Local existence]\label{local existence}
		Let $u_0,v_0$ be initial data satisfying \cref{weighted initial data norm}, then for $M$ sufficiently large, there exist $u,v$ on time interval $I\supset[0,1]$ such that $u,v$ solve \cref{eom} with initial data $u_0,v_0$, moreover
		\begin{equation}\label{local bootstrap assumption}
			\norm{(u,v)}_{X(I)}<2\frac{E}{M}.
		\end{equation}
		
	\end{theorem}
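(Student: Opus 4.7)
The plan is to execute a Picard iteration for \cref{eom} in the closed ball $B = \{\norm{(u,v)}_{X([0,T])} \leq 2E/M\}$ on a fixed interval $[0,T]$ with $T>1$ (say $T=2$). The key simplification is that on a bounded interval the decay weights $\jpns{t}^{1+3\delta}$ and $\jpns{t/M}^{1+3\delta}$ in the $Z$-norm are uniformly bounded in $M \geq 1$, so the $Z$-component reduces to $\norm{U}_{L^\infty_tW_x^{k,(1/6-\delta)^{-1}}} + \sqrt M\, \norm{V}_{L^\infty_tW_x^{k,(1/6-\delta)^{-1}}}$, which by Sobolev embedding is controlled by the $N$-component (the condition $N>k+6$ in \cref{N k s condition} provides ample room). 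Hence it suffices to close estimates for the $N$- and $S$-components of the $X$-norm.

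I would take $(U^{(0)}, V^{(0)})$ to be the linear free evolutions of the initial data and define $(U^{(n+1)}, V^{(n+1)})$ as the Duhamel solutions of the linear Klein-Gordon equations with nonlinearities $F_U = UV - U^3/2$ and $F_V = U^2V - U^4/2 + \partial U \cdot \partial U$ evaluated at the previous iterate. Applying the half-wave energy identity at regularity $H^N$ to the propagators $u_\pm = (\partial_t \pm i\jpns{D})U$ and $v_\pm = (\partial_t \pm i\jpns{D}_M)V$ one obtains
\begin{equation*}
\norm{u^{(n+1)}_\pm}_{L^\infty_{[0,T]}H^N} + \sqrt M\, \norm{v^{(n+1)}_\pm}_{L^\infty_{[0,T]}H^N} \leq C E/M + CT \sup_{[0,T]} \bigl(\norm{F_U}_{H^N} + \sqrt M\, \norm{F_V}_{H^N}\bigr),
\end{equation*}
where the data term is controlled by \cref{weighted initial data norm}. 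Tame Moser product estimates yield $\norm{F_U}_{H^N} \lesssim (E/M)^2$ (using $H^N \hookrightarrow L^\infty$ and the bootstrap), and $\norm{F_V}_{H^N} \lesssim (E/M)^2$ after observing that the potential loss of derivative in $\partial U \cdot \partial U$ is absorbed because $\norm{u_\pm}_{H^N}$ already controls $\norm{U}_{H^{N+1}}$ via $U = (2i\jpns{D})^{-1}(u_+ - u_-)$. Multiplying by $\sqrt M$ for the heavy field, the total nonlinear contribution is $O(E^2/M^{3/2})$, which is $o(E/M)$ for $M$ large.

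For the $S$-component I would use the exact commutation identity $x\, e^{\mp i\jpns{D}t} = e^{\mp i\jpns{D}t}\bigl(x \mp i t \nabla \jpns{D}^{-1}\bigr)$ (and the same with $\jpns{D}_M$), which gives
\begin{equation*}
\partial_t\bigl(x\, l^{(n+1)}_\pm\bigr) = -e^{\mp i\jpns{D}t}\Bigl(\bigl(x \mp i t \nabla \jpns{D}^{-1}\bigr)\, F_U(U^{(n)}, V^{(n)})\Bigr),
\end{equation*}
and similarly for $x\, h^{(n+1)}_\pm$. On $[0,T]$ the factor $t$ is harmlessly bounded, the weight $x$ is distributed through the nonlinearity by Leibniz, and $\norm{x\, U^{(n)}}_{H^{k+1/2}}$ is converted back to $\norm{x\, l^{(n)}_\pm}_{H^{k+3/2}}$ plus initial-data terms via $U = (2i\jpns{D})^{-1}(e^{i\jpns{D}t}l_+ - e^{-i\jpns{D}t}l_-)$ and the order-zero commutator $[\jpns{D}^{-1}, x]$. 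Integrating in time contributes a further factor $T$, and the nonlinear contribution to the $S$-norm is again $O((E/M)^2)$.

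Combining the three pieces, for $M$ sufficiently large depending only on $E$ the iteration map sends $B$ to itself and is a contraction (difference estimates have the same polynomial structure, with one factor provided by the bootstrap smallness), so it admits a unique fixed point which is the desired solution satisfying \cref{local bootstrap assumption}. The main technical point to watch is the $S$-norm estimate: commuting $x$ past the Klein-Gordon propagator creates the time-linear factor $t\nabla\jpns{D}^{-1}$, which is harmless on $[0,T=2]$ but blows up at large times, and this is precisely the obstruction that forces the more delicate space-time resonance decomposition of \cref{space time method} to be used in the global problem \cref{global existence}.
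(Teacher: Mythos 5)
Your proposal is correct but takes a genuinely different route from the paper's. The paper does not run a fixed-point argument at all: it invokes Theorem 14 of \cite{reall_effective_2022} (an $M$-independent local existence result whose lifespan $T\sim 1/\mathcal{E}$ depends only on $\mathcal{E}=\sob{u_0}{N}+M\sob{v_0}{N}$), observes that $\mathcal{E}=E/M$ makes $T\geq 1$ for $M$ large, and then verifies \cref{local bootstrap assumption} by a short a priori estimate on the profile in $H^N$ using Duhamel, Sobolev embedding and the boundedness of the interval, explicitly noting that the other components of the $X$-norm are handled ``in a similar manner using the finiteness of the time interval.'' You instead construct the solution from scratch by Picard iteration directly in the ball $\{\norm{(u,v)}_{X([0,T])}\leq 2E/M\}$, which is more self-contained and has the merit of explaining at the same time \emph{why} the $X$-norm bound propagates. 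Your key technical observations — that the $Z$-weights are uniformly bounded on $[0,2]$ so the $Z$-component is subsumed by the $N$-component via Sobolev embedding, that $\norm{u_\pm}_{H^N}$ controls $\norm{U}_{H^{N+1}}$ and thus absorbs the derivative loss in $\partial U\cdot\partial U$, and that the commutator identity $x e^{\mp i\jpns{D}t}=e^{\mp i\jpns{D}t}(x\mp t\,D/\jpns{D})$ produces a $t$-linear factor harmless on a fixed interval — are all correct and align with the structure the paper uses implicitly. One small caveat you share with the paper: the constant $2$ in \cref{local bootstrap assumption} is slightly optimistic, since the Sobolev embedding constant from $H^N$ into $W^{k,(1/6-\delta)^{-1}}$ entering the $Z$-component is not necessarily $\leq 1$; this is cosmetic (adjust the local constant or the global constant $10$ accordingly) but worth flagging if one insists on the literal inequality as stated.
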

	\begin{proof}
		In \cite{reall_effective_2022} Theorem 14, they prove an $M$ independent lower bound on time existence for similar equation, where their existence time $T$ only depends on
		\begin{equation*}
			\mathcal{E}=\sob{u_0}{N}+M\sob{v_0}{N}.
		\end{equation*}
		Moreover $T\sim\frac{1}{\mathcal{E}}$ in the $\mathcal{E}\to0$ limit. Their results extend to this problem. We may use this result with the fact that in \cref{weighted initial data norm} we have $\mathcal{E}=\frac{E}{M}$ to get existence on time interval $[0,1]$ for sufficiently high $M$. In fact $M\sim E$ suffices. We will show \cref{local bootstrap assumption} only for the light field energy norm, but all are bounded in a similar manner using the finiteness of the time interval. In particular, we don't need to show any amount of decay, as we're at a fixed time. We use unitarity of $e^{is\jpns{D}}$, triangle inequality, Minkowski inequality, Sobolev embedding and the bootstrap assumption \cref{local bootstrap assumption} to get
		\begin{equation}
			\sob{u}{N}=\sob{l}{N}\leq\sob{u_0}{N}+\int_0^1\d s \sob{u}{N}\sob{v}{N}\leq\frac{E}{M}+4\frac{E^2}{M^{2.5}}<1.5\frac{E}{M},
		\end{equation}
		where we used $M$ sufficiently large in the last step.
		This is a strengthening of the bootstrap, so it indeed holds by continuity.
	\end{proof}
	
	The main body of the paper will consist of proving the following 
	\begin{prop}[A priori bounds]\label{a priori bounds}
		Let $u,v$ be solutions to \cref{eom} on $[0,T]$, $T>1$, satisfying \cref{bootstrap assumption}. Then, for $M$ sufficiently large, 
		\begin{equation}
			\norm{(e^{i\jpns{D}t}L,e^{i\jpns{D}_Mt}H)}_{X([0,T])}\leq C\frac{\jpns{E}^3}{M^{1.5}}
		\end{equation}
		with $C$ independent of $T$ and $M$.
	\end{prop}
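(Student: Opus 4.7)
The plan is to bound each of the three constituent pieces of the $X$-norm—the energy piece $N$, the weighted-profile piece $S$, and the decay piece $Z$—separately, in each case starting from the Duhamel formulae for $L,H$ obtained by integrating \cref{partial of l}--\cref{partial of h} from $1$ to $t$. In each of these I focus on the quadratic contribution, because the cubic and quartic terms in \cref{eom} are strictly better: they have enough pointwise decay that a single application of H\"older and \cref{decay estimate} yields an $s$-integrable right-hand side, and they only need the bootstrap \cref{bootstrap assumption} together with the product estimates \cref{multiplier lemma}. So everywhere below the real work is on the bilinear oscillatory integrals, to which I apply the resonance partition $1=\chi_S+\chi_T$.

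For the energy piece $N$, integration by parts in $x$ loses derivatives, so I instead integrate by parts in $s$ on $\supp\chi_S$ (where $|\phi|\gtrsim1$) using $\tfrac{1}{i\phi}\partial_s e^{is\phi}=e^{is\phi}$. This produces two boundary terms at $s=1,t$, which are controlled in $H^N$ by \cref{multiplier lemma} applied with the high Sobolev slot on one profile and the low (Z-type) slot on the other, plus a bulk cubic term in which $\partial_s l,\partial_s h$ are replaced by the nonlinearity; the $Z$ piece of the bootstrap gives enough $s^{-(1+3\delta)}$ decay on the low slot to integrate this cubic term. On $\supp\chi_T$, $|\nabla_\nu\phi|$ is bounded below and IBP in $\nu$ against $\tfrac{\nabla_\nu\phi}{s|\nabla_\nu\phi|^2}\nabla_\nu e^{is\phi}=ie^{is\phi}$ produces a $1/s$ together with $xl,xh$ factors, which is exactly what the $S$-piece of the bootstrap controls, and again one $Z$-factor supplies the missing half-power of $s$ to make the integral converge.

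For the $S$-piece I apply $x=\nabla_\nu$ to $L,H$ before the resonance split; the $\nu$-derivative lands either on $e^{is\phi}$ (costing $s|\nabla_\nu\phi|$, which is re-absorbed by a second round of IBP in $\nu$ using $\nabla_\nu^2\phi$ bounds from \cref{frequency analysis}), on one profile (giving $xl$ or $xh$ inside the bilinear form, closed by the bootstrap), or on $\chi_{S,T}$, which is symbolic and harmless. For the $Z$-piece I use \cref{decay estimate} to move from $W^{k,(1/6-\delta)^{-1}}$ to $W^{k+4(1/3+\delta),(1/3-\delta)^{-1}}$ on the Duhamel integrand and apply \cref{multiplier lemma} to the bilinear term; the slow exponent $1+3\delta$ with $\delta=\tfrac{1}{14}$ is deliberately placed strictly between $1$ and $\tfrac32$ so that the temporal integrals produced both on $\supp\chi_S$ (exponent $\tfrac32$ from decay alone) and on $\supp\chi_T$ (exponent $1+3\delta$ after the $1/s$ from IBP) converge uniformly in $t$.

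The main obstacle—and what dictates the numerology in \cref{N k s condition}—is tracking the powers of $M$. The heavy field decays on the long timescale $t\sim M$, so $\jpns{t/M}^{-3/2}$ replaces $\jpns{t}^{-3/2}$ and each time integration against the heavy decay rate produces an $M^{+3/2}$, which then must be paid for. The balancing factors are the explicit $M^{-2}$ on the right-hand side of the $V$-equation in \cref{eom}, the $\sqrt M$-weighting of heavy components in $Z,S,N$, and, for mixed light-heavy bilinear terms, the extra smallness of the heavy frequency denominator $\jpns{\rho-\nu}_M^{-1}$ that appears in $a^{l,l,h}$ and—crucially—the lower bounds on $|\phi|$ and $|\nabla_\nu\phi|$ from \cref{frequency analysis}, which in the non-resonant regions grow like $M$ on $\supp\chi_S$. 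My expectation is that once these gains and losses are combined, each bilinear piece yields at worst $\jpns{E}^3 M^{-1.5}$, with the headline $M^{-1.5}$ coming from pairing the overall $M^{-2}$ smallness of the initial data for $V$ with at most one $M^{+1/2}$ from the heavy-field time integration, thereby matching the target in the statement of \cref{a priori bounds}.
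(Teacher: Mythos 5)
Your treatment of the $Z$- and $S$-components follows the paper's strategy in outline (resonance split, IBP in $s$ on $\supp\chi_S$, IBP in $\nu$ on $\supp\chi_T$, with \cref{multiplier lemma} keeping the multiplier loss on the low-frequency slot, and the $M^{+3/2}$ from heavy time integration paid for by the $M^{-2}$ smallness of the heavy data). However, your proposed treatment of the $N$-component has a genuine gap. You run the resonance machinery at regularity $H^N$: on $\supp\chi_T$ the IBP in $\nu$ produces $xl,xh$ factors, which you claim ``is exactly what the $S$-piece of the bootstrap controls.'' It is not: the $S$-norm controls $\norm{xl}_{H^{k+3/2}}$, and by \cref{N k s condition} one has $N>k+6>k+3/2$, so the bootstrap gives no control whatsoever on $\norm{xl}_{H^N}$. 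When \cref{multiplier lemma} is invoked for $\norm{T_m(xl,v)}_{H^N}$ it necessarily produces a term with all $N$ derivatives on the $xl$ slot (either input may carry the high frequency), and that term cannot be closed. This is precisely the warning recorded in \cref{expected long term behaviour}: ``For boundedness of energy ($H^N$ norms), we cannot use integration by parts arguments, because we would lose derivatives.''

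The paper's approach to the $N$-piece — which is also simpler than what you propose — is to skip the resonance split entirely and estimate directly: $\norm{L}_{H^N}\lesssim\int_1^t\norm{uv}_{H^N}\lesssim\int_1^t(\norm{u}_{H^N}\norm{v}_{L^\infty}+\norm{u}_{L^\infty}\norm{v}_{H^N})$, with the $L^\infty$ factors supplied by the $Z$-norm via Sobolev embedding. The $s$-integral already converges because the $Z$-decay exponent $1+3\delta$ exceeds $1$; the resonance analysis is only needed for the $Z$- and $S$-pieces, where a naive estimate either does not decay at the required rate (as explained in \cref{expected long term behaviour}) or carries a growing factor of $s$ from $\nabla_\rho$ hitting the phase. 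Two smaller points: for the $S$-piece the weight corresponds to $\nabla_\rho$ in Fourier (output frequency), not $\nabla_\nu$; and the explicit factor of $s$ in $a_1$ is killed by a single IBP in $s$ on $\supp\chi_S$ or a single IBP in $\nu$ on $\supp\chi_T$ — there is no ``second round.''
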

	
	Using this a priori bound, we close the bootstrap in the usual manner to obtain
	\begin{theorem}[Global existence]\label{global existence}
		Let $u_0,v_0$ be initial data satisfying \cref{weighted initial data norm}, $M$ sufficiently large (depending on the value of $E$), then there exist $u,v$ defined for all time such that $u,v$ solve \cref{eom} with initial data $u_0,v_0$. 
	\end{theorem}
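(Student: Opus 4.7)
The plan is the standard continuity/bootstrap argument: use \cref{local existence} to initialise and \cref{a priori bounds} to close. For $M$ sufficiently large depending on $E$, \cref{local existence} already gives a solution on $[0,1]$ with $\norm{(u,v)}_{X([0,1])} \leq 2E/M$. Define
$$T^* := \sup\bigl\{T\geq 1 : (u,v) \text{ extends to } [0,T] \text{ and } \norm{(u,v)}_{X([0,T])} \leq 10E/M\bigr\}.$$
Continuity of $T\mapsto \norm{(u,v)}_{X([0,T])}$ along the solution, together with the strict improvement at $T=1$, implies $T^*>1$; the goal is to show $T^*=\infty$.

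Assume for contradiction that $T^*<\infty$. Then \cref{bootstrap assumption} holds on $[0,T^*]$, and \cref{a priori bounds} applies. Using the profile variables \cref{resonance transformation} and integrating \cref{evolution of profiles}, I write, for each choice of sign,
$$u_\pm(t) = e^{\pm i\jpns{D} t} u_\pm(0) + e^{\pm i\jpns{D} t}\int_0^t \partial_s l_\pm(s)\,ds,$$
and analogously for $v_\pm$. Splitting $\int_0^t = \int_0^1 + \int_1^t$, the integral over $[0,1]$ is absorbed into the local-existence bound of \cref{local existence}, while the integral over $[1,t]$ equals (after stripping the outer linear phase) the quantity $L$ (resp.\ $H$) of \cref{space time method}, so \cref{a priori bounds} contributes at most $C\jpns{E}^3/M^{1.5}$ to $\norm{(u,v)}_X$.

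It remains to control the pure linear evolutions $e^{\pm i\jpns{D}t}u_\pm(0)$ and $e^{\pm i\jpns{D}_M t}v_\pm(0)$ in each constituent of the $X$-norm by the weighted initial data assumption \cref{weighted initial data norm}. The $N$-piece is conserved by the free Klein-Gordon flows, hence bounded by $\norm{u_0}_{H^N} + \sqrt{M}\norm{v_0}_{H^N} \lesssim E/M$. The $S$-piece is the weighted $H^{k+3/2}$-norm of the profiles, which for a linear evolution are constant in time, so it equals $\norm{xu_\pm(0)}_{H^{k+3/2}} + \sqrt{M}\norm{xv_\pm(0)}_{H^{k+3/2}} \lesssim E/M$. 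The $Z$-piece follows from \cref{decay estimate} with $p=(1/6-\delta)^{-1}$: the exponent $3(1/p-1/2) = -(1+3\delta)$ matches exactly the prescribed weights $\jpns{t}^{1+3\delta}$ and $\jpns{t/M}^{1+3\delta}$, and the regularity cost $k + 4(1/2-1/p)$ fits inside $H^N$ because $N>k+6$. Summing the three contributions,
$$\norm{(u,v)}_{X([0,T^*])} \leq C_0\frac{E}{M} + C\frac{\jpns{E}^3}{M^{1.5}} \leq 5\frac{E}{M}$$
once $M \geq M_0(E)$ is chosen so that $C_0 + C\jpns{E}^3/(E\sqrt{M}) \leq 5$. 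This strict improvement over the bootstrap extends by continuity slightly past $T^*$, contradicting its maximality; hence $T^* = \infty$.

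The substantive work is packaged inside \cref{a priori bounds}, so this theorem is essentially its wrapper. The only delicacy in the wrapper is numerology: the a priori bound beats the bootstrap by a factor $\jpns{E}^3/(E\sqrt{M})$, which becomes $\ll 1$ only after forcing $M_0(E)\sim E^4$, reflecting how non-perturbative the original $\bar U$ data is prior to the $\bar U=MU$ rescaling that turned \cref{eom} into a small-data problem.
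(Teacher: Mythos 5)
Your proof is correct and follows the same wrapper structure as the paper's: local existence initialises, \cref{a priori bounds} strictly improves the bootstrap constant, and continuity extends. One technical slip worth flagging: to bound the $Z$-piece of the free evolution $e^{\pm i\jpns{D}t}u_\pm(0)$, the decay estimate \cref{decay estimate} demands control of $\norm{u_\pm(0)}_{W^{k+4(1/2-1/p),p'}}$ with $p'=(5/6+\delta)^{-1}<2$; since $L^2$-based spaces never embed into $L^{p'}$ for $p'<2$ (scaling forbids it), this is \emph{not} supplied by $H^N$ ``because $N>k+6$'' as you claim. It is instead supplied by the weighted piece $\norm{xu_0}_{H^{k+3/2}}$ of \cref{weighted initial data norm}, via Cauchy--Schwarz against $\jpns{x}^{-1}$, which is precisely why the weight appears in the initial-data norm at all. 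The same mechanism is needed to carry your $[0,1]$ Duhamel piece forward to $t>1$, so it is not simply ``absorbed'' by \cref{local existence}; it must be propagated linearly just like the data term. The paper's own proof elides the identical step (and its displayed formula contains a typo, $u(1)$ in place of $e^{i(t-1)\jpns{D}}u(1)$), so your proposal matches its intent; just correct the attribution of where the $L^{p'}$ control comes from.
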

	\begin{proof}
		Let's take $u,v$ first defined on some interval $I\supset[0,1]$ provided by \cref{local existence} (this already requires $M$ sufficiently large). Then, using \cref{a priori bounds} and the fact that $\norm{(u,v)}_{X(I)}<2\frac{E}{M}$ we may close the bootstrap and conclude that indeed $u,v$ satisfy \cref{bootstrap assumption}.
		
		Let's look at one component, all the rest follow in the same way. For ease of notation, set $1/p=1/6-\delta$, then, using the definition of $L$, triangle inequality, \cref{a priori bounds} and \cref{local existence} we get 
		\begin{equation}
			\jpns{t}^{3(\frac{1}{2}-\frac{1}{p})}\norm{u(t)}_{W^{k,p}}=\jpns{t}^{3(\frac{1}{2}-\frac{1}{p})}\norm{u(1)+e^{it\jpns{D}}L}_{W^{k,p}}\leq 2\frac{E}{M}+C\frac{\jpns{E}^3}{M^{1.5}}\leq 3\frac{E}{M} 
		\end{equation}
		where $C$ is the constant from \cref{a priori bounds} and note, that we used $M$ sufficiently big in the last inequality, as we know that $C$ doesn't depend on $M$.
		
		This strengthens \cref{bootstrap assumption} and by continuity we get that \cref{bootstrap assumption} holds for all times. By local existence, we know that the solution must be global.
	\end{proof}

	\section{Frequency analysis}\label{frequency analysis}
	\subsection{Separation of resonances}
	Throughout this section, we will fix $M$ sufficiently large so that it's bigger than any fixed numerical factor that it is compared to.
	
	Looking at the quadratic terms of \eqref{eom}, we find that we have to analyse the behaviour of the following two sets of frequencies
	\begin{align}
		\phi_u^{\epsilon_0\epsilon_1\epsilon_2}=\epsilon_0\jpns{\rho}+\epsilon_1\jpns{\nu}_M+\epsilon_2\jpns{\rho-\nu} \label{u frequency} \\
		\phi_v^{\epsilon_0\epsilon_1\epsilon_2}=\epsilon_0\jpns{\rho}_M+\epsilon_1\jpns{\nu}+\epsilon_2\jpns{\rho-\nu}, \label{v frequency}
	\end{align}
	where $\epsilon_i\in\{\pm1\}$. Let's start with analysing the spacetime resonance of \eqref{u frequency}, and drop the superscript for readability. We can immediately see, using the triangle inequality that the only case when \cref{u frequency} can vanish is if $\epsilon_0=\epsilon_2=-\epsilon_1$ (without loss of generality $\epsilon_0=1$), so we fix $\phi_u=\jpns{\rho}-\jpns{\nu}_M+\jpns{\rho-\nu}$ as other cases are strictly easier to treat. Therefore, we get
	\begin{equation}
		\begin{gathered}
			\nabla_\nu\phi_u=-\frac{\nu}{\jpns{\nu}_M}-\frac{\rho-\nu}{\jpns{\rho-\nu}}.
		\end{gathered}
	\end{equation}
	We have the following resonance sets
	\begin{itemize}
		\item time: $\phi_u=0$. For fixed $\rho$ this shape corresponds to an ellipsoid with major (minor) axis of size $m\sqrt{(1+\abs{\rho}^2)(m^2-4)}$ ($m\sqrt{m^2-4}$) and centre at $\hat{\rho}m^2\abs{\rho}/2$.
		\item space: $\abs{\nabla_\nu\phi_u}=0$ that implies $\nu=\rho\frac{M}{M-1}$
		\item spacetime: $\phi_u=\abs{\nabla_\nu\phi_u}=0$ which defines the empty set:
	\end{itemize}
	\begin{equation}
		\begin{gathered}
			\abs{\nabla_\nu\phi_u}=0\implies\abs{\nu}=M\abs{\rho-\nu}\implies \abs{\phi_u}=(M-1)\jpns{\rho-\nu}-\jpns{(M-1)(\rho-\nu)}\gtrsim \frac{M}{\jpns{\rho-\nu}}.
		\end{gathered}
	\end{equation}

	This also establishes our first result
	\begin{lemma}
		For $\phi_u$ defined above, we have
		\begin{equation}
			\begin{gathered}
				\abs{\phi_u}|_{\nabla_{\nu\phi_u=0}}\gtrsim\frac{M}{\jpns{\rho-\nu}}=\frac{M}{\jpns{\nu/M}}.
			\end{gathered}
		\end{equation}
	\end{lemma}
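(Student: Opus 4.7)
The plan is to make the heuristic computation right above the statement into a rigorous estimate, by substituting the explicit form of the space resonant set into $\phi_u$ and then exploiting a difference-of-square-roots identity.

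First I would recall from the discussion just above that at the space resonant set $\nabla_\nu \phi_u = 0$, the two unit vectors $\nu/\jpns{\nu}_M$ and $(\rho-\nu)/\jpns{\rho-\nu}$ must be antiparallel, which forces $\nu$ and $\rho-\nu$ to be collinear and yields both $\nu = \rho M/(M-1)$ and the relation $|\nu| = M |\rho-\nu|$. From this I would get the two key algebraic identities
\begin{equation*}
\jpns{\nu}_M^2 = M^2 + M^2|\rho-\nu|^2 = M^2 \jpns{\rho-\nu}^2, \qquad \rho = -(M-1)(\rho-\nu),
\end{equation*}
so that $\jpns{\nu}_M = M\jpns{\rho-\nu}$ and $\jpns{\rho} = \jpns{(M-1)(\rho-\nu)}$.

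Substituting these into the definition of $\phi_u$ I obtain
\begin{equation*}
\phi_u\big|_{\nabla_\nu\phi_u=0} = \jpns{(M-1)(\rho-\nu)} - M\jpns{\rho-\nu} + \jpns{\rho-\nu} = \jpns{(M-1)(\rho-\nu)} - (M-1)\jpns{\rho-\nu}.
\end{equation*}
Then I would use the identity $a-b = (a^2-b^2)/(a+b)$ on $a = (M-1)\jpns{\rho-\nu}$ and $b = \jpns{(M-1)(\rho-\nu)}$. The numerator simplifies to
\begin{equation*}
a^2 - b^2 = \bigl((M-1)^2 + (M-1)^2|\rho-\nu|^2\bigr) - \bigl(1 + (M-1)^2|\rho-\nu|^2\bigr) = M(M-2),
\end{equation*}
while the denominator satisfies $a+b \leq 2a = 2(M-1)\jpns{\rho-\nu}$. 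This gives
\begin{equation*}
|\phi_u|\big|_{\nabla_\nu\phi_u=0} \geq \frac{M(M-2)}{2(M-1)\jpns{\rho-\nu}} \gtrsim \frac{M}{\jpns{\rho-\nu}}
\end{equation*}
for $M$ sufficiently large, which is the first half of the claim.

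The second equality $M/\jpns{\rho-\nu} = M/\jpns{\nu/M}$ is then immediate: on the space resonant set $|\rho-\nu| = |\nu|/M$ implies $\jpns{\rho-\nu}^2 = 1 + |\nu|^2/M^2 = \jpns{\nu/M}^2$. There is no real obstacle here; the only thing to watch is that the identity $a-b = (a^2-b^2)/(a+b)$ has to be applied with the correct sign so that the lower bound is on $|\phi_u|$ rather than on $\phi_u$ itself, but since $a > b$ (both $a^2 - b^2 = M(M-2) > 0$ for $M \geq 3$) this is automatic.
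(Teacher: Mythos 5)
Your proof is correct and is essentially the same as the paper's: the paper's proof body for this lemma is empty, with the computation displayed immediately before the lemma carrying out exactly the substitution $\abs{\nu} = M\abs{\rho-\nu}$ and the rearrangement $\abs{\phi_u}\big|_{\nabla_\nu\phi_u=0} = (M-1)\jpns{\rho-\nu} - \jpns{(M-1)(\rho-\nu)}$ that you make explicit via the difference-of-squares identity. One cosmetic slip: $\nu/\jpns{\nu}_M$ and $(\rho-\nu)/\jpns{\rho-\nu}$ are not unit vectors (their Euclidean norms are strictly less than $1$), but the antiparallelism plus the magnitude-matching $\abs{\nu}/\jpns{\nu}_M = \abs{\rho-\nu}/\jpns{\rho-\nu}$ still yields $\abs{\nu} = M\abs{\rho-\nu}$ after squaring, so nothing in the argument is affected.
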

	\begin{proof}
	\end{proof}
	We also need to get a bound the other way around:
	\begin{lemma}
		For $\phi_u$ defined above, we have
		\begin{equation}
			\begin{gathered}
				\abs{\nabla_\nu\phi_u}|_{\phi_u=0}\gtrsim\max(\frac{1}{\jpns{\rho-\nu}^2},\frac{1}{\jpns{\nu/M}^2})
			\end{gathered}
		\end{equation}
	\end{lemma}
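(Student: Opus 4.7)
The plan is to derive an explicit formula for $|\nabla_\nu\phi_u|^2$ on the time-resonant set $\{\phi_u=0\}$, combine it with a feasibility constraint coming from the triangle inequality on the Fourier variables, and then conclude by elementary algebra.

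First, differentiating yields $\nabla_\nu\phi_u = -\nu/\jpns{\nu}_M - (\rho-\nu)/\jpns{\rho-\nu}$, so that
\[
|\nabla_\nu\phi_u|^2 = 2 - \frac{M^2}{\jpns{\nu}_M^2} - \frac{1}{\jpns{\rho-\nu}^2} + \frac{2\,\nu\cdot(\rho-\nu)}{\jpns{\nu}_M\jpns{\rho-\nu}}.
\]
The polarisation identity $2\nu\cdot(\rho-\nu) = |\rho|^2 - |\nu|^2 - |\rho-\nu|^2$ combined with the resonance relation $\jpns{\nu}_M = \jpns{\rho} + \jpns{\rho-\nu}$ collapses the scalar product to $\nu\cdot(\rho-\nu) = M^2/2 - \jpns{\nu}_M\jpns{\rho-\nu}$ on $\{\phi_u = 0\}$. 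Substituting and simplifying produces the compact expression
\[
\left.|\nabla_\nu\phi_u|^2\right|_{\phi_u=0} = \frac{M^2\jpns{\rho}\jpns{\rho-\nu} - (\jpns{\rho}+\jpns{\rho-\nu})^2}{(\jpns{\rho}+\jpns{\rho-\nu})^2\,\jpns{\rho-\nu}^2}.
\]

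Second, writing $a:=\jpns{\rho}$, $b:=\jpns{\rho-\nu}$, the existence of $\nu$ with $|\nu|^2 = (a+b)^2 - M^2$ and with the prescribed values of $|\rho|$ and $|\rho-\nu|$ forces the triangle inequality $\bigl||\rho|-|\rho-\nu|\bigr| \leq |\nu| \leq |\rho|+|\rho-\nu|$. Squaring each one-sided bound, rearranging, and squaring once more to clear the remaining radical collapses the two inequalities into the single symmetric polynomial condition
\[
(a-b)^2 \leq (M^2-4)\bigl(ab - M^2/4\bigr),
\]
which rearranges to $M^2 ab - (a+b)^2 \geq M^2(M^2-4)/4 \geq M^4/5$ for $M$ large. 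Viewing the same inequality as a quadratic in $a$ also yields $a \leq bM^2$ and, by symmetry, $b \leq aM^2$, whence in particular $\jpns{\rho} + \jpns{\rho-\nu} \lesssim M^2\jpns{\rho-\nu}$.

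Finally, combining the two ingredients: the trivial denominator bound $\jpns{\rho-\nu}^2 \leq \jpns{\nu}_M^2 = (\jpns{\rho}+\jpns{\rho-\nu})^2$ gives
\[
|\nabla_\nu\phi_u|^2 \gtrsim \frac{M^4}{\jpns{\nu}_M^4} = \frac{1}{\jpns{\nu/M}^4},
\]
while the feasibility bound $(\jpns{\rho}+\jpns{\rho-\nu})^2 \lesssim M^4\jpns{\rho-\nu}^2$ gives
\[
|\nabla_\nu\phi_u|^2 \gtrsim \frac{M^4}{M^4\,\jpns{\rho-\nu}^4} = \frac{1}{\jpns{\rho-\nu}^4}.
\]
Taking square roots and the maximum yields the claim. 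The main technical obstacle I anticipate is the feasibility calculation in the second step: one must verify that the two one-sided triangle inequalities, after two successive squarings and a fair amount of simplification, collapse into the same symmetric polynomial bound, and that this bound is exactly sharp enough so that the leading $M^4$ behaviour survives in the numerator of the formula obtained in the first step.
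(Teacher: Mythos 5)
Your proof is correct, and it takes a genuinely different and cleaner route than the paper. The paper bounds $|\nabla_\nu\phi_u|^2$ by splitting it into two non-negative pieces $I^2+II^2$, introducing the parametrisation $|\nu|=M\lambda|\rho-\nu|$, $\cos\theta=-1+\delta$, quoting a one-variable minimisation estimate for $I$, and then case-splitting on whether $\jpns{\rho-\nu}$ or $\jpns{\nu/M}$ is $\gtrsim\sqrt{M}$ to pin down $\lambda\sim1$ and then control $\delta$. You instead derive the exact closed form
\[
\left.|\nabla_\nu\phi_u|^2\right|_{\phi_u=0}
=\frac{M^2\jpns{\rho}\jpns{\rho-\nu}-(\jpns{\rho}+\jpns{\rho-\nu})^2}
{(\jpns{\rho}+\jpns{\rho-\nu})^2\jpns{\rho-\nu}^2},
\]
which follows from substituting the polarisation-plus-resonance identity $\nu\cdot(\rho-\nu)=M^2/2-\jpns{\nu}_M\jpns{\rho-\nu}$ (the paper's own \eqref{phi=0}) and using $\jpns{\nu}_M=\jpns{\rho}+\jpns{\rho-\nu}$ on the resonance set. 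The feasibility constraint you derive from the triangle inequality, $(a+b)^2\leq M^2(ab+1-M^2/4)$ with $a=\jpns{\rho}$, $b=\jpns{\rho-\nu}$, is exactly the condition that $\cos\theta\in[-1,1]$, and it bounds the numerator from below by $M^2(M^2-4)/4$ and also yields $a\lesssim M^2 b$; both of your final bounds then follow by elementary estimates on the denominator. The algebra is tight (equality occurs at $\rho=0$, which I checked), and the argument buys you sharper structure and avoids the paper's case-splitting entirely. One small presentational caveat: you should state explicitly that $\jpns{\nu}_M=M\jpns{\nu/M}$ when passing from the bound in terms of $\jpns{\nu}_M$ to the bound in terms of $\jpns{\nu/M}$, and note that $\jpns{\nu}_M\geq M$ guarantees $|\nu|^2=(a+b)^2-M^2\geq0$; these are trivial but keep the chain airtight.
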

	\begin{proof}
		In order to get this, consider the following rearrangement
		\begin{equation}\label{nabla phi split}
			\begin{gathered}
				\abs{\nabla_\nu\phi_u}^2=\frac{\abs{\nu}^2}{\jpns{\nu}_M^2}+\frac{\abs{\rho-\nu}^2}{\jpns{\rho-\nu}^2}-\frac{2\nu\cdot(\rho-\nu)}{\jpns{\nu}_M\jpns{\rho-\nu}}\\=\Big(\frac{\abs{\nu}}{\jpns{\nu}_M}-\frac{\abs{\rho-\nu}}{\jpns{\rho-\nu}}\Big)^2+2(1+\cos\theta)\frac{\abs{\nu}\abs{\rho-\nu}}{\jpns{\nu}_M\jpns{\rho-\nu}}=:I^2+II^2,
			\end{gathered}
		\end{equation}
		with $\theta$ the angle between the incoming frequencies. We split $\abs{\nabla_\nu\phi_u}$ into the sum of two non negative quantitates, the first term vanishes iff $\abs{\nu}=M\abs{\rho-\nu}$ while the second if $\cos\theta=-1$, which leads us to consider the parametrisation $\abs{\nu}=M\lambda\abs{\rho-\nu}$, $\cos\theta=-1+\delta$ for $\delta,\lambda\in\R$. Solving a minimisation problem, one can indeed show that
		\begin{equation}\label{1st part}
			\begin{gathered}
				\abs{\frac{\lambda x}{\jpns{x\lambda}}-\frac{x}{\jpns{x}}}\gtrsim\begin{cases}
					\frac{x}{\jpns{x}\jpns{\lambda x}^2}\abs{1-\lambda^{2}} & \lambda<1 \\
					\frac{x\lambda}{\jpns{\lambda x}\jpns{x}^2}\abs{1-\lambda^{-2}} & \lambda>1
				\end{cases}
			\end{gathered}
		\end{equation}
		for $x(=\abs{\rho-\nu}),\lambda\in \R^+$. After a few lines of algebra, we get
		\begin{equation}\label{phi=0}
			\begin{gathered}
				\phi_u=0 \iff M^2/2=\jpns{\nu}_M\jpns{\rho-\nu}+\nu\cdot(\rho-\nu)
			\end{gathered}
		\end{equation}
		, which in the new parametrisation reads
		\begin{equation}\label{phi=0 in parametrisation}
			\begin{gathered}
				M/2=\jpns{\lambda x}\jpns{x}+(-1+\delta)x^2\lambda\leq 2\jpns{\lambda x}\jpns{x}
			\end{gathered}
		\end{equation}
		where $x=\abs{\rho-\nu}$. Now we have to do a case splitting
		\begin{itemize}
			\item $\sqrt{M}/2<\jpns{x}$: from \eqref{1st part}, we get that $\lambda\in[1-1/10,1+1/10]$, otherwise $\abs{\nabla_\nu\phi_u}\gtrsim\frac{1}{\jpns{x}^2},\frac{1}{\jpns{\lambda x}^2}$. Therefore, we can expand the expression in \eqref{phi=0 in parametrisation} to get that
			\begin{equation}\label{bound on angle}
				\begin{gathered}
					\delta=\frac{M-1/\lambda-\lambda}{2\lambda x^2}+O(\frac{1}{x^3})
				\end{gathered}
			\end{equation}
			\item $\sqrt{M}/2<\jpns{\lambda x}$: we use the second inequality in \eqref{1st part} to get $\lambda\sim 1$, so we again conclude \eqref{bound on angle}.
		\end{itemize}
		\ \\
		\cref{bound on angle} in the range $\lambda\sim1$ implies
		\begin{equation}
			\begin{gathered}
				II^2=\delta\frac{x^2\lambda}{\jpns{\lambda x}\jpns{x}}\gtrsim \frac{1}{x^2}.
			\end{gathered}
		\end{equation}
		Therefore, we get $\abs{\nabla_\nu\phi_u}\gtrsim\max(\frac{1}{\jpns{x}^2},\frac{1}{\jpns{\lambda x}^2})$, which is what we wanted.
		
	\end{proof}
	The next problem is to  realise that not only the resonance sets are disjoint, but that their separation does not degenerate either with $M$ or with the increase of frequency:
	\begin{lemma}
		For $\phi_u$ as above, we have
		\begin{equation}
			\begin{gathered}
				\text{dist}(\{\phi_u=0\},\{\nabla_\nu\phi_u=0\})\gtrsim \sqrt{M}
			\end{gathered}
		\end{equation}
	\end{lemma}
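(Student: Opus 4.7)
The plan is to pick an arbitrary time-resonant $\nu_1$ (so $\phi_u(\nu_1)=0$) and the (for fixed $\rho$, unique) space-resonant $\nu_2=M\rho/(M-1)$, set $w_2=\rho-\nu_2=-\rho/(M-1)$, $r=\jpns{w_2}$ and $\eta=\nu_1-\nu_2$, and prove $|\eta|\gtrsim\sqrt{M}$ by a short case split on $r$.

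First I would dispose of the small-$r$ regime $r\leq 2\sqrt{M}$: the first lemma of this subsection gives $|\phi_u(\nu_2)|\gtrsim M/r\gtrsim\sqrt{M}$, and since $|\nabla_\nu\phi_u|\leq 2$ everywhere (each of the two summands is a unit vector), the mean value theorem for $\phi_u$ along the segment $[\nu_2,\nu_1]$ (using $\phi_u(\nu_1)=0$) immediately yields $|\eta|\gtrsim\sqrt{M}$.

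The main regime is $r>2\sqrt{M}$. I would further split on $|\eta|$: if $|\eta|>|w_2|/2$ the conclusion is trivial because $|w_2|=\sqrt{r^2-1}\gtrsim\sqrt{M}$. Otherwise $|\eta|\leq|w_2|/2$ and I Taylor-expand $\phi_u$ at $\nu_2$ to second order. Since $\nabla_\nu\phi_u(\nu_2)=0$,
\begin{equation*}
	|\phi_u(\nu_2)| \;=\; \Bigl|\int_0^1(1-t)\,\eta^T\nabla^2\phi_u(\nu_2+t\eta)\,\eta\,dt\Bigr| \;\leq\; \tfrac{1}{2}|\eta|^2\sup_{t\in[0,1]}|\nabla^2\phi_u(\nu_2+t\eta)|.
\end{equation*}

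The central technical step, and what I expect to be the main obstacle, is showing that the Hessian is uniformly $O(1/r)$ along this segment. The explicit form of $\nabla^2\phi_u$ gives the pointwise norm bound $1/\jpns{\nu}_M+1/\jpns{\rho-\nu}$. The second term is $\leq 2/r$ since $|\rho-\nu(t)|\geq|w_2|/2$ along the segment. The first term requires the crucial identity $|\nu_2|=M|w_2|$ (immediate from $\nu_2=M\rho/(M-1)$ and $w_2=-\rho/(M-1)$): this forces $|\nu(t)|\geq|\nu_2|-|\eta|\gtrsim Mr$, hence $1/\jpns{\nu(t)}_M\lesssim 1/(Mr)\leq 1/r$. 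Combining this with the lower bound $|\phi_u(\nu_2)|\gtrsim M/r$ of the first lemma and the Taylor estimate above produces $|\eta|^2/r\gtrsim M/r$, so $|\eta|\gtrsim\sqrt{M}$.

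The obstacle to be aware of: without the gain from $|\nu_2|=M|w_2|$ one would use only $\jpns{\nu}_M\geq M$, giving $|\nabla^2\phi_u|\lesssim 1/M+1/r$; this is strong enough for $r\leq M$ but degenerates to $|\eta|\gtrsim M/\sqrt{r}$ in the large-$|\rho|$ regime $r\gg M$, which is strictly weaker than $\sqrt{M}$. So the real content of the lemma is that space-resonant points are automatically large in $\nu$-space whenever $\rho$ is large, and this extra gain in $|\nu_2|$ exactly compensates the loss from $|\phi_u(\nu_2)|$ shrinking like $M/r$ for large $|\rho|$.
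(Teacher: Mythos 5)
Your proof is correct, and it takes a genuinely different route from the paper's. The paper's argument is purely algebraic: it parametrises $\nu=\nu_0+\eta$ around the space-resonant point $\nu_0=\rho M/(M-1)$, substitutes into the squared form $\phi_u=0\iff M^2/2=\jpns{\nu}_M\jpns{\rho-\nu}+\nu\cdot(\rho-\nu)$, cancels, and then argues that one of the surviving terms (either $|\eta|^2$-weighted or $\rho\cdot\eta$-weighted) must carry at least half of a right-hand side that is $\gtrsim M^2$, forcing $|\eta|\gtrsim\sqrt{M}$ in both cases. Your argument instead recycles the first lemma of the subsection ($|\phi_u|\gtrsim M/\jpns{\rho-\nu}$ at the stationary point) as a lower bound on the critical value, and converts it into a distance estimate via a quadratic Taylor expansion with a Hessian bound $|\nabla^2_\nu\phi_u|\lesssim 1/r$ along the segment, with a preliminary case split handled by the crude gradient bound $|\nabla_\nu\phi_u|<2$. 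I checked the calculations: the identity $|\nu_2|=M|w_2|$ is right and is indeed the pivotal gain that lets the $\jpns{\nu}_M^{-1}$ term in the Hessian also scale like $1/r$ rather than $1/M$; the MVT case ($r\lesssim\sqrt{M}$), the trivial case ($|\eta|\gtrsim|w_2|$), and the Taylor case all close correctly, and the three regimes exhaust all possibilities. What your approach buys is transparency: it makes explicit that the $\sqrt{M}$ separation is the combination of a lower bound on $|\phi_u|$ at the critical point with quadratic flatness there, and it reuses the previous lemma rather than redoing its work. The paper's computation is self-contained but more opaque, and in fact your structural reading — large $|\rho|$ shrinks the critical value like $M/r$ but also pushes the stationary $\nu$ out to scale $Mr$, and these cancel — is a clean explanation of why the paper's algebra comes out uniform in $\rho$.
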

	\begin{proof}
		We know that $\nabla_\nu\phi_u=0$ corresponds to the point $\nu_0=\rho\frac{M}{M-1}$, so we just need to add an arbitrary perturbation to this and find when $\phi_u=0$ can be satisfied. Let's set $\nu=\nu_0+\eta$ and plug this into \eqref{phi=0}
		\begin{equation}
			\begin{gathered}
				(M/2-(\rho-\nu)\cdot\nu/M)^2=\Big(M/2+\frac{\abs{\rho}^2}{(M-1)^2}+\frac{\rho\cdot\eta(1+1/M)}{M-1}+\frac{\abs{\eta}^2}{M^2}\Big)^2\\=\jpns{\nu/M}^2\jpns{\rho-\nu}^2=\Big(\abs{\frac{\rho}{M-1}+\frac{\eta}{M}}^2+1\Big)\Big(\abs{\frac{\rho}{M-1}+\eta}^2+1\Big).
			\end{gathered}
		\end{equation}
		Expending the brackets and cancelling some of the terms yields
		\begin{equation}
			\begin{gathered}
				\abs{\eta}^2(1-\frac{1}{M})(\frac{\abs{\rho}^2}{(M-1)^2}+1)-\frac{M-2}{M-1}\rho\cdot\eta(1+1/M)\\=(M^2/4-1)+\frac{\abs{\rho}^2(M-2)}{(M-1)^2}+\frac{(\rho\cdot\eta)^2(1-1/M)}{(M-1)^2}
			\end{gathered}
		\end{equation}
		with both sides containing only non-negative terms, except the dot product on the left hand side. For equality to hold, we know that either the first or the second term on LHS must be greater than half of the RHS. In the former, we get
		\begin{equation}
			\begin{gathered}
				\abs{\eta}^2(\frac{\abs\rho^2}{M^2}+1)\gtrsim M^2+\abs{\rho}^2/M\\
				\implies \abs{\eta}\gtrsim \sqrt{M},
			\end{gathered}
		\end{equation}
		while in the latter
		\begin{equation}
			\begin{gathered}
				\abs{\rho}\abs{\eta}\gtrsim {M^2}+1)\gtrsim M^2+\abs{\rho}^2/M \gtrsim M^2+\sqrt{M}\abs{\rho}+\abs{\rho}^2/M\\
				\implies \abs{\eta}\gtrsim\sqrt{M}.
			\end{gathered}
		\end{equation}
		
	\end{proof}
	
	Now that we've established lower bounds and separation on the resonance sets, it is time to get a partition of unity:
	\begin{lemma}\label{chi lemma}
		There exist $\chi_S,\chi_T:\R^{6}\to[0,1]$, such that $\chi_S|_{\{\phi_u=0\}}=\chi_T|_{\{\nabla_\nu\phi_u=0\}}=0$, $\chi_S+\chi_T=1$ and 
		\begin{equation}
			\begin{gathered}
				\abs{\phi_u}|_{\supp \chi_S}\gtrsim\frac{M}{\jpns{\rho-\nu}}\gtrsim\frac{M}{\jpns{\nu/M}}\\
				\abs{\nabla_\nu\phi_u}|_{\supp \chi_T}\gtrsim\max(\frac{1}{\jpns{\rho-\nu}^2},\frac{1}{\jpns{\nu/M}^2}).
			\end{gathered}
		\end{equation}
		In particular, we may choose $\chi_S$ to have support on a rectangle with side lengths 2 and $\jpns{\frac{\rho}{M}}$.
	\end{lemma}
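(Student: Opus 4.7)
My plan is to construct $\chi_S$ as a smooth cutoff supported on a box centred at the unique space-resonant point $\nu_\star(\rho) := \rho M/(M-1)$, and set $\chi_T := 1 - \chi_S$. The three preceding frequency lemmas already provide, pointwise, $|\phi_u(\nu_\star)| \gtrsim M/\jpns{\rho/M}$, the bound $|\nabla_\nu\phi_u||_{\phi_u=0} \gtrsim \max(\jpns{\rho-\nu}^{-2}, \jpns{\nu/M}^{-2})$, and a uniform separation of the two resonant sets by $\gtrsim\sqrt{M}$. What remains is to promote these pointwise statements into the uniform bounds requested on $\supp\chi_S$ and $\supp\chi_T$.

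Concretely, fix a bump $\chi\in C^\infty_c(\R)$ with $\chi|_{[-1/2,1/2]}\equiv 1$ and $\supp\chi\subset[-1,1]$, and let $\hat e_0 := \hat\rho, \hat e_1, \hat e_2$ be an orthonormal basis of $\R^3$. Then I set
\begin{equation*}
\chi_S(\rho,\nu) := \chi\!\left(\tfrac{(\nu-\nu_\star)\cdot\hat e_0}{c\jpns{\rho/M}}\right) \chi\!\left(\tfrac{(\nu-\nu_\star)\cdot\hat e_1}{c}\right) \chi\!\left(\tfrac{(\nu-\nu_\star)\cdot\hat e_2}{c}\right)
\end{equation*}
for a small universal constant $c>0$ to be fixed. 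This is the advertised rectangle, with longitudinal side $\sim\jpns{\rho/M}$ and transverse sides $O(1)$. On $\supp\chi_S$ one checks directly that $\jpns{\rho-\nu}\sim\jpns{\nu/M}\sim\jpns{\rho/M}$, so the bound on the first cutoff reduces to $|\phi_u|\gtrsim M/\jpns{\rho/M}$ uniformly on the rectangle.

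To verify this, I would Taylor expand $\phi_u$ at $\nu_\star$. Since $\nabla_\nu\phi_u(\nu_\star)=0$, the error is at most $\tfrac12\|\mathrm{Hess}\,\phi_u\|\,|\nu-\nu_\star|^2$. A direct computation using the identities $\jpns{x}^{-1}-|x|^2\jpns{x}^{-3}=\jpns{x}^{-3}$ and the colinearity of $\nu_\star$ and $\rho-\nu_\star$ with $\rho$ shows that the Hessian's longitudinal entry is $\sim\jpns{\rho/M}^{-3}$, while the transverse entries are $\sim\jpns{\rho/M}^{-1}$. Combined with the allowed displacements ($\sim\jpns{\rho/M}$ longitudinally and $\sim 1$ transversally), the quadratic error is uniformly $O(\jpns{\rho/M}^{-1})$, which is a factor of $M$ smaller than $|\phi_u(\nu_\star)|\gtrsim M/\jpns{\rho/M}$. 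Choosing $c$ small and $M$ large then forces the error below half of the pointwise value, so the bound propagates to the whole rectangle.

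For $\supp\chi_T$, a point has at least one rescaled coordinate $\geq c/2$, so either $|(\nu-\nu_\star)\cdot\hat\rho|\gtrsim\jpns{\rho/M}$ or a transverse component is $\gtrsim 1$. Close to $\nu_\star$, the first-order Taylor expansion $\nabla_\nu\phi_u = \mathrm{Hess}\,\phi_u(\nu_\star)(\nu-\nu_\star) + O(|\nu-\nu_\star|^2)$ combined with the Hessian estimates from the previous paragraph already yields $|\nabla_\nu\phi_u| \gtrsim \max(\jpns{\rho-\nu}^{-2}, \jpns{\nu/M}^{-2})$; far from $\nu_\star$, one mimics the $I^2 + II^2$ decomposition of \eqref{nabla phi split} from the second frequency lemma, converting a lower bound on the displacement from $\nu_\star$ into bounds on $\lambda$ and $\delta$. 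The main obstacle is matching the constants consistently across the near-$\nu_\star$ and far-from-$\nu_\star$ regimes, and across the longitudinal and transverse directions, since the scale $\jpns{\rho/M}$ used in the cutoff is tuned to the $\phi_u$-estimate rather than to the natural scale of $\nabla_\nu\phi_u$; this is where the separation lemma pins down the transition region and where a careful choice of $c$ in terms of the implicit constants of the three preceding lemmas closes the argument.
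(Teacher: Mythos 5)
Your construction of $\chi_S$ (a rectangle centred at $\nu_\star=\rho M/(M-1)$ with longitudinal side $\sim\jpns{\rho/M}$ and transverse sides $O(1)$) is essentially identical to the paper's, but your method of verification is genuinely different, at least for the $\phi_u$ bound. The paper's $|\phi_u|$ argument (for $|\rho|>M/4$, write $\nu=\rho\alpha+\nu^\perp$ and do "binomial expansion and a few lines of algebra") is replaced by a cleaner geometric observation: since $\nabla_\nu\phi_u(\nu_\star)=0$, the variation of $\phi_u$ over the rectangle is at most $\frac12\sup\|\mathrm{Hess}\,\phi_u\|\,|\nu-\nu_\star|^2$; your Hessian computation ($H_{\parallel\parallel}\sim\jpns{\rho/M}^{-3}$, $H_{\perp\perp}\sim\jpns{\rho/M}^{-1}$ using $\jpns{x}_M^{-1}-|x|^2\jpns{x}_M^{-3}=M^2\jpns{x}_M^{-3}$) is correct and gives an error $O(\jpns{\rho/M}^{-1})$, a factor $M$ below $|\phi_u(\nu_\star)|\gtrsim M/\jpns{\rho/M}$. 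In fact you do not even need the constant $c$ small: the ratio is $\sim c^2/M$, so any fixed box scale works once $M$ is large. This is an improvement in transparency over the source.

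For the $|\nabla_\nu\phi_u|$ bound on $\supp\chi_T$ the situation is less satisfactory and this is where the genuine gap lies. Your near-$\nu_\star$ Taylor argument is borderline: the relevant third derivative (in the longitudinal direction, dominated by $\partial^3\jpns{\rho-\nu}$) is $\sim\jpns{\rho/M}^{-4}$, so the quadratic remainder becomes comparable to the linear term $H_{\parallel\parallel}\cdot t\sim\jpns{\rho/M}^{-3}t$ exactly when $t\sim\jpns{\rho/M}$, which is precisely the scale at which one leaves $\supp\chi_S$. With a small enough $c$ the Taylor argument does cover a thin annulus outside the box, but it emphatically does \emph{not} cover the whole of $\supp\chi_T$, and you explicitly defer the far regime to "mimicking the $I^2+II^2$ decomposition". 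Note that the second preceding lemma cannot be quoted directly: it gives the bound only \emph{on the set} $\{\phi_u=0\}$, whereas on $\supp\chi_T$ you need it at arbitrary points. So the far regime is not a matter of matching constants; the bound itself must be re-derived there. The paper does exactly this with the explicit case-by-case bounds for $I$ (three regimes by $|\nu-\rho|$ versus $|\rho|/M$) and for $II$ (two regimes by $|\nu^\perp|$ and the sign of $\cos\theta$), then combines them according to the position of the sphere $\{I=0\}$ relative to $\supp\chi_S$. That case analysis is the content of the lemma, and your proposal does not reproduce it.

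A smaller issue: your $\chi_S$ is built from the frame $\hat\rho,\hat e_1,\hat e_2$, which is not smooth in $\rho$ at $\rho=0$, so $\chi_S$ as written is not a smooth function on $\R^6$ (smoothness is needed downstream for the Coifman--Meyer theory). The paper avoids this by using a separate isotropic ball-cutoff $\chi(|\nu|/2)\chi(|\rho|)$ near $\rho=0$ and gluing it to the rectangle via $1-\chi(|\rho|)$; the small-$|\rho|$ bound $|\phi_u|>M/2$ there follows trivially from $|\rho-\nu|\lesssim 1$ and $\jpns{\nu}_M\geq M$, without Taylor expansion. You should incorporate a similar fix.
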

	\begin{proof}
		Let $\chi:\R\to[0,1]$ be a smooth cut off function such that 
		\begin{equation}
			\begin{gathered}
				\chi(x)=\begin{cases}
					1 & \abs{x}\leq 1\\
					0 & \abs{x}\geq 2
				\end{cases}
			\end{gathered}
		\end{equation}
		and smoothly interpolating between the two. Then, let 
		\begin{equation}
			\begin{gathered}
				\chi_S(\rho,\nu)=\chi(\abs{\nu}/2)\chi(\abs{\rho})+(1-\chi(\abs{\rho}))\chi\big(\abs{\nu^\perp})\chi(\abs{\rho\frac{M}{M-1}-\nu^\parallel}\frac{4}{\jpns{\frac{\rho}{M}}}\big)
			\end{gathered}
		\end{equation}
		where $\nu^\perp,\nu^\parallel$ are the orthogonal and parallel projections of $\nu$ along $\rho$. The term $4/\jpns{\rho/M}$ is there, so that at high frequencies the width along the parallel direction is about $\abs\rho/4M$, while the separation for small and large $\rho$ ensures smoothness.
		
		$\chi_S$ is clearly smooth, so we just need to verify the bounds. We start with $\nabla_\nu\phi_u$ and we will use the form \eqref{nabla phi split} and case splitting. Before doing so, note that $I$ vanishes only when $\abs{\nu}=M\abs{\rho-\nu}$, which happens (for fixed $\rho$) on a sphere centred at $\frac{M^2\rho}{M^2-1}(\approx\rho)$ of radius $\abs{\rho}\frac{M}{\sqrt{M^2-1}}(\approx\abs{\rho}/M)$. When away from this region, we can give an explicit bound for its size as 
		\begin{subnumcases}{I\gtrsim}
			\frac{1}{\jpns{\nu/M}^2}\gtrsim \frac{1}{\jpns{\rho-\nu}^2} & $\abs{\nu-\rho}>\max(2\frac{\abs{\rho}}{M},1/8),\abs{\rho}>1$ \label{out of circle}
			\\
			\frac{1}{\jpns{\rho-\nu}^2} \gtrsim \frac{1}{\jpns{\nu/M}^2}& $\abs{\nu-\rho}<\frac{1}{2}\frac{\abs{\rho}}{M},\abs{\rho}>M$ \label{in circle}
			\\
			\frac{1}{\jpns{\nu/M}^2}\gtrsim \frac{1}{\jpns{\rho-\nu}^2} & $\abs{\nu}>2,\abs{\rho}<1$ \label{very small circle}
		\end{subnumcases}
		which may be found using expansion of the square roots.
		
		Similarly, using $1+\cos\theta\gtrsim 1$ for $\abs{\nu^\perp}>1$ we have a bound $II$ away from the set $\{\cos\theta=-1\}$
		\begin{subnumcases}{II\gtrsim}
			\frac{1}{\jpns{\rho-\nu}^2} & $\abs{\nu^\perp}\in[1,2\abs{\rho}/M],\abs{\rho}>M/8$ \label{costheta perpendicular}
			\\
			1 & $\abs{\rho}>M/8,\abs{\rho-\nu}\in\frac{\abs{\rho}}{M}[1/2,2],\cos\theta\geq0$ \label{costheta good side}
		\end{subnumcases}
		
		See Figure \cref{circles}.
		
		\begin{figure}[t]\label{circles}
			\centering
			\def\svgwidth{\columnwidth}
			\scalebox{0.5}{%% Creator: Inkscape 1.1 (1:1.1+202105261517+ce6663b3b7), www.inkscape.org
%% PDF/EPS/PS + LaTeX output extension by Johan Engelen, 2010
%% Accompanies image file '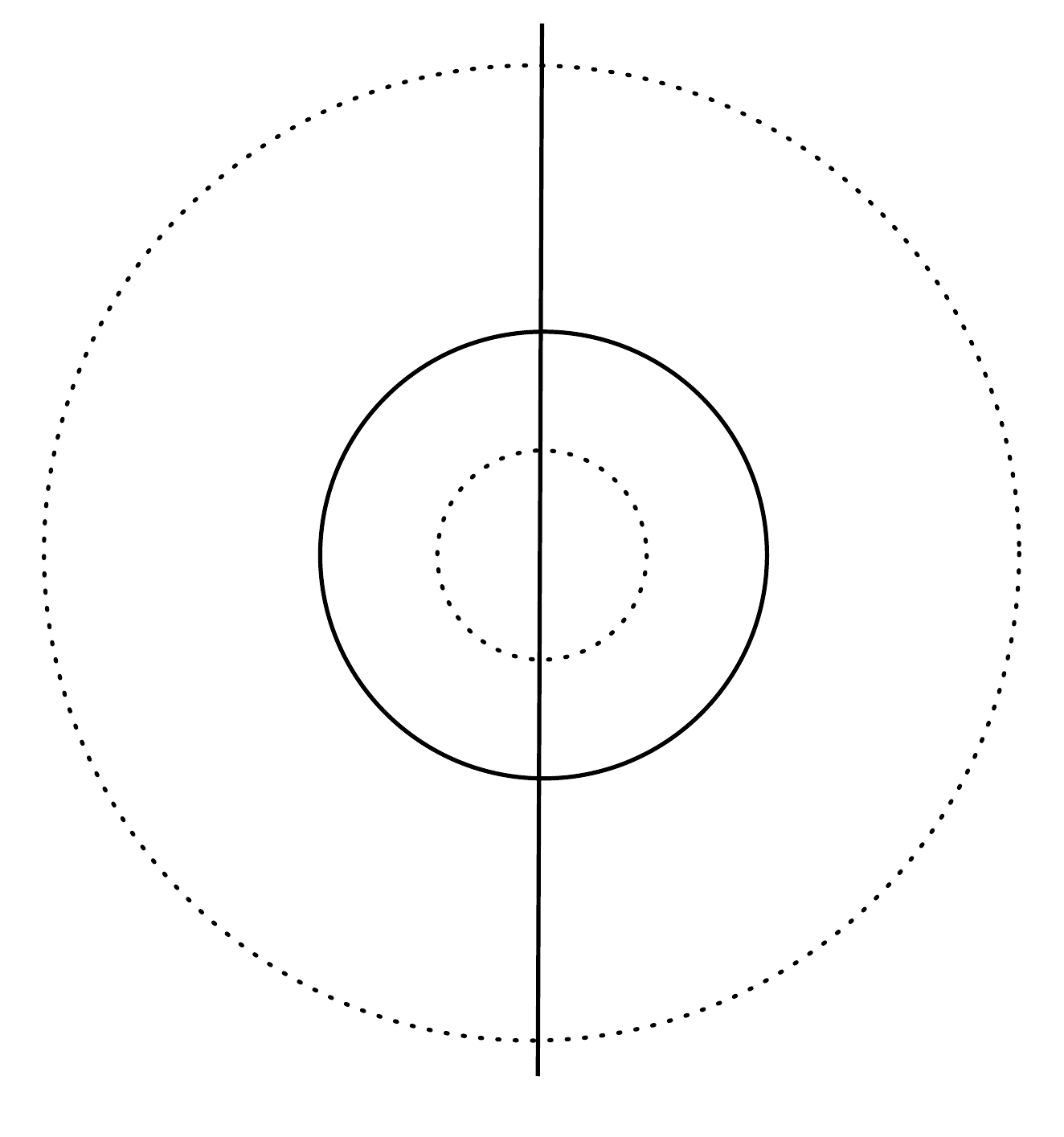' (pdf, eps, ps)
%%
%% To include the image in your LaTeX document, write
%%   \input{<filename>.pdf_tex}
%%  instead of
%%   \includegraphics{<filename>.pdf}
%% To scale the image, write
%%   \def\svgwidth{<desired width>}
%%   \input{<filename>.pdf_tex}
%%  instead of
%%   \includegraphics[width=<desired width>]{<filename>.pdf}
%%
%% Images with a different path to the parent latex file can
%% be accessed with the `import' package (which may need to be
%% installed) using
%%   \usepackage{import}
%% in the preamble, and then including the image with
%%   \import{<path to file>}{<filename>.pdf_tex}
%% Alternatively, one can specify
%%   \graphicspath{{<path to file>/}}
%% 
%% For more information, please see info/svg-inkscape on CTAN:
%%   http://tug.ctan.org/tex-archive/info/svg-inkscape
%%
\begingroup%
  \makeatletter%
  \providecommand\color[2][]{%
    \errmessage{(Inkscape) Color is used for the text in Inkscape, but the package 'color.sty' is not loaded}%
    \renewcommand\color[2][]{}%
  }%
  \providecommand\transparent[1]{%
    \errmessage{(Inkscape) Transparency is used (non-zero) for the text in Inkscape, but the package 'transparent.sty' is not loaded}%
    \renewcommand\transparent[1]{}%
  }%
  \providecommand\rotatebox[2]{#2}%
  \newcommand*\fsize{\dimexpr\f@size pt\relax}%
  \newcommand*\lineheight[1]{\fontsize{\fsize}{#1\fsize}\selectfont}%
  \ifx\svgwidth\undefined%
    \setlength{\unitlength}{381.06074524bp}%
    \ifx\svgscale\undefined%
      \relax%
    \else%
      \setlength{\unitlength}{\unitlength * \real{\svgscale}}%
    \fi%
  \else%
    \setlength{\unitlength}{\svgwidth}%
  \fi%
  \global\let\svgwidth\undefined%
  \global\let\svgscale\undefined%
  \makeatother%
  \begin{picture}(1,1.06872724)%
    \lineheight{1}%
    \setlength\tabcolsep{0pt}%
    \put(0,0){\includegraphics[width=\unitlength,page=1]{circles.pdf}}%
    \put(0.54714798,0.88122017){\makebox(0,0)[lt]{\lineheight{1.25}\smash{\begin{tabular}[t]{l}$\cos\theta<0$\end{tabular}}}}%
    \put(0.31894852,0.88105218){\makebox(0,0)[lt]{\lineheight{1.25}\smash{\begin{tabular}[t]{l}$\cos\theta>0$\end{tabular}}}}%
    \put(0,0){\includegraphics[width=\unitlength,page=2]{circles.pdf}}%
    \put(0.51294155,0.50566886){\makebox(0,0)[lt]{\lineheight{1.25}\smash{\begin{tabular}[t]{l}$\nu=\rho$\end{tabular}}}}%
    \put(0.65333499,0.36472102){\makebox(0,0)[lt]{\lineheight{1.25}\smash{\begin{tabular}[t]{l}$I=0$\end{tabular}}}}%
    \put(0.76507747,0.55773069){\makebox(0,0)[lt]{\lineheight{1.25}\smash{\begin{tabular}[t]{l}$\cos\theta=-1$\end{tabular}}}}%
    \put(0,0){\includegraphics[width=\unitlength,page=3]{circles.pdf}}%
    \put(0.38865192,0.68641794){\makebox(0,0)[lt]{\lineheight{1.25}\smash{\begin{tabular}[t]{l}$|\rho|/M$\end{tabular}}}}%
  \end{picture}%
\endgroup%
}
			\caption{For fixed $\rho$, the behaviour of $\nabla_\nu\phi_u$ as a function of $\eta$, following the discussion of \cref{chi lemma} .The cylindrical symmetry is used for the plot}
		\end{figure}
		
		\begin{figure}
			\centering
			\begin{subfigure}[b]{0.3\textwidth}
				\centering
				\scalebox{0.4}{%% Creator: Inkscape 1.1 (1:1.1+202105261517+ce6663b3b7), www.inkscape.org
%% PDF/EPS/PS + LaTeX output extension by Johan Engelen, 2010
%% Accompanies image file 'circle in rectangle.pdf' (pdf, eps, ps)
%%
%% To include the image in your LaTeX document, write
%%   \input{<filename>.pdf_tex}
%%  instead of
%%   \includegraphics{<filename>.pdf}
%% To scale the image, write
%%   \def\svgwidth{<desired width>}
%%   \input{<filename>.pdf_tex}
%%  instead of
%%   \includegraphics[width=<desired width>]{<filename>.pdf}
%%
%% Images with a different path to the parent latex file can
%% be accessed with the `import' package (which may need to be
%% installed) using
%%   \usepackage{import}
%% in the preamble, and then including the image with
%%   \import{<path to file>}{<filename>.pdf_tex}
%% Alternatively, one can specify
%%   \graphicspath{{<path to file>/}}
%% 
%% For more information, please see info/svg-inkscape on CTAN:
%%   http://tug.ctan.org/tex-archive/info/svg-inkscape
%%
\begingroup%
  \makeatletter%
  \providecommand\color[2][]{%
    \errmessage{(Inkscape) Color is used for the text in Inkscape, but the package 'color.sty' is not loaded}%
    \renewcommand\color[2][]{}%
  }%
  \providecommand\transparent[1]{%
    \errmessage{(Inkscape) Transparency is used (non-zero) for the text in Inkscape, but the package 'transparent.sty' is not loaded}%
    \renewcommand\transparent[1]{}%
  }%
  \providecommand\rotatebox[2]{#2}%
  \newcommand*\fsize{\dimexpr\f@size pt\relax}%
  \newcommand*\lineheight[1]{\fontsize{\fsize}{#1\fsize}\selectfont}%
  \ifx\svgwidth\undefined%
    \setlength{\unitlength}{531bp}%
    \ifx\svgscale\undefined%
      \relax%
    \else%
      \setlength{\unitlength}{\unitlength * \real{\svgscale}}%
    \fi%
  \else%
    \setlength{\unitlength}{\svgwidth}%
  \fi%
  \global\let\svgwidth\undefined%
  \global\let\svgscale\undefined%
  \makeatother%
  \begin{picture}(1,0.38418079)%
    \lineheight{1}%
    \setlength\tabcolsep{0pt}%
    \put(0,0){\includegraphics[width=\unitlength,page=1]{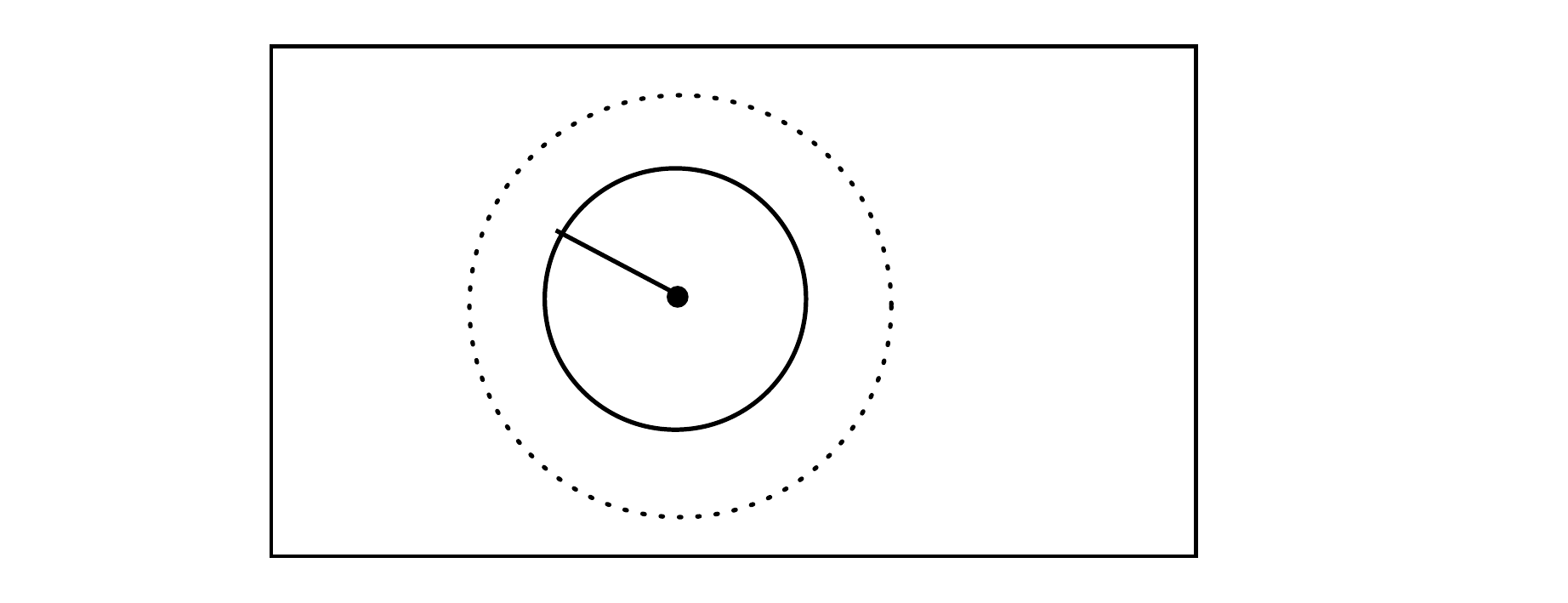}}%
    \put(0.44067797,0.18502825){\makebox(0,0)[lt]{\lineheight{1.25}\smash{\begin{tabular}[t]{l}$\rho$\end{tabular}}}}%
    \put(0.37288136,0.23022599){\makebox(0,0)[lt]{\lineheight{1.25}\smash{\begin{tabular}[t]{l}$|\rho|/M$\end{tabular}}}}%
  \end{picture}%
\endgroup%
}
				\caption{$\abs{\rho}<M/4$}
				\label{small rho}
			\end{subfigure}
			\hfil
			\begin{subfigure}[b]{0.3\textwidth}
				\centering
				\scalebox{0.4}{%% Creator: Inkscape 1.1 (1:1.1+202105261517+ce6663b3b7), www.inkscape.org
%% PDF/EPS/PS + LaTeX output extension by Johan Engelen, 2010
%% Accompanies image file 'circle rectangle intersection.pdf' (pdf, eps, ps)
%%
%% To include the image in your LaTeX document, write
%%   \input{<filename>.pdf_tex}
%%  instead of
%%   \includegraphics{<filename>.pdf}
%% To scale the image, write
%%   \def\svgwidth{<desired width>}
%%   \input{<filename>.pdf_tex}
%%  instead of
%%   \includegraphics[width=<desired width>]{<filename>.pdf}
%%
%% Images with a different path to the parent latex file can
%% be accessed with the `import' package (which may need to be
%% installed) using
%%   \usepackage{import}
%% in the preamble, and then including the image with
%%   \import{<path to file>}{<filename>.pdf_tex}
%% Alternatively, one can specify
%%   \graphicspath{{<path to file>/}}
%% 
%% For more information, please see info/svg-inkscape on CTAN:
%%   http://tug.ctan.org/tex-archive/info/svg-inkscape
%%
\begingroup%
  \makeatletter%
  \providecommand\color[2][]{%
    \errmessage{(Inkscape) Color is used for the text in Inkscape, but the package 'color.sty' is not loaded}%
    \renewcommand\color[2][]{}%
  }%
  \providecommand\transparent[1]{%
    \errmessage{(Inkscape) Transparency is used (non-zero) for the text in Inkscape, but the package 'transparent.sty' is not loaded}%
    \renewcommand\transparent[1]{}%
  }%
  \providecommand\rotatebox[2]{#2}%
  \newcommand*\fsize{\dimexpr\f@size pt\relax}%
  \newcommand*\lineheight[1]{\fontsize{\fsize}{#1\fsize}\selectfont}%
  \ifx\svgwidth\undefined%
    \setlength{\unitlength}{400.5bp}%
    \ifx\svgscale\undefined%
      \relax%
    \else%
      \setlength{\unitlength}{\unitlength * \real{\svgscale}}%
    \fi%
  \else%
    \setlength{\unitlength}{\svgwidth}%
  \fi%
  \global\let\svgwidth\undefined%
  \global\let\svgscale\undefined%
  \makeatother%
  \begin{picture}(1,1)%
    \lineheight{1}%
    \setlength\tabcolsep{0pt}%
    \put(0,0){\includegraphics[width=\unitlength,page=1]{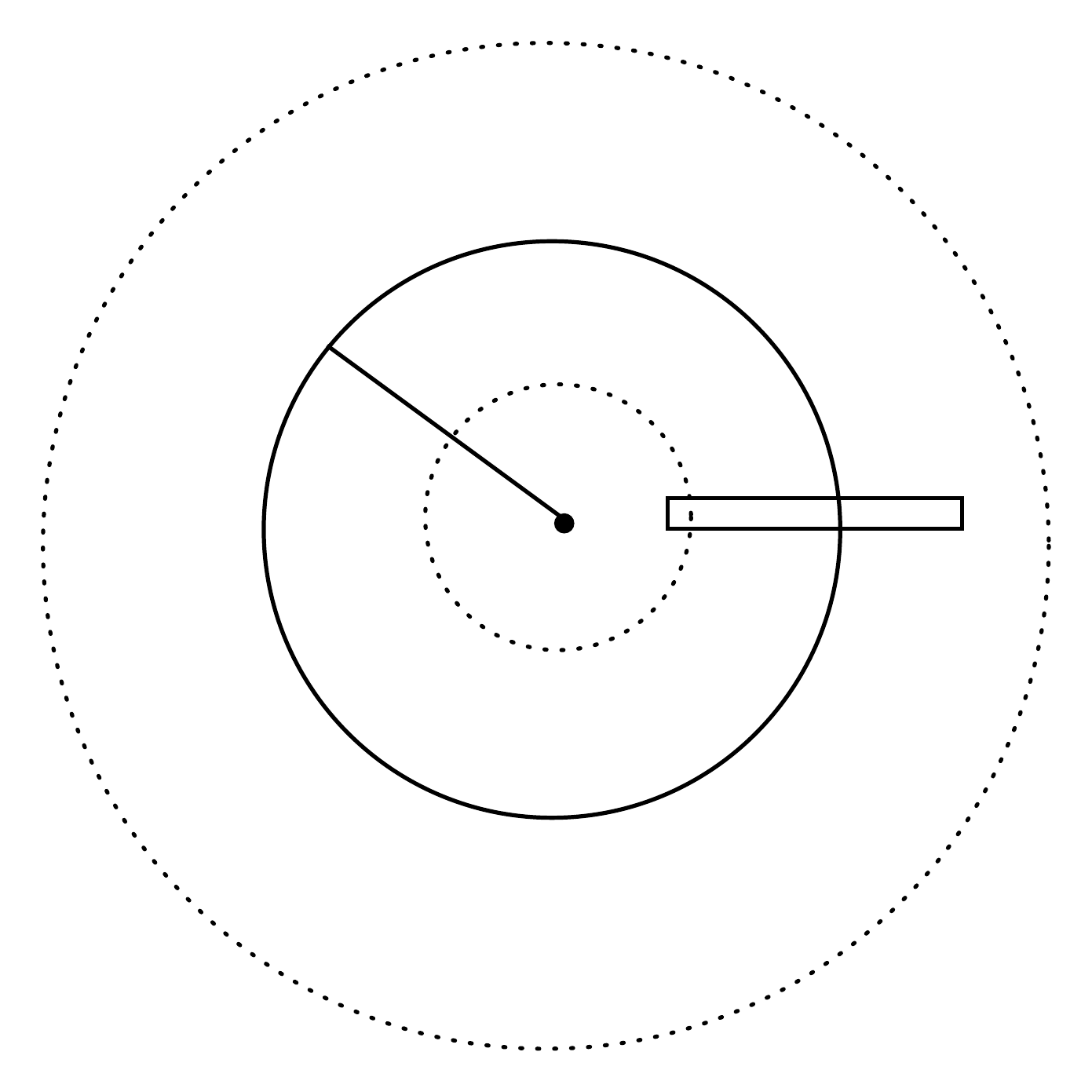}}%
    \put(0.50561798,0.47565543){\makebox(0,0)[lt]{\lineheight{1.25}\smash{\begin{tabular}[t]{l}$\rho$\end{tabular}}}}%
    \put(0.3576779,0.67602996){\makebox(0,0)[lt]{\lineheight{1.25}\smash{\begin{tabular}[t]{l}$|\rho|/M$\end{tabular}}}}%
    \put(0.78651685,0.55805243){\makebox(0,0)[lt]{\lineheight{1.25}\smash{\begin{tabular}[t]{l}$|\rho|/2M$\end{tabular}}}}%
  \end{picture}%
\endgroup%
}
				\caption{$\abs{\rho}>M$}
				\label{large rho}
			\end{subfigure}
			\caption{Change of resonance regions (circles) for two different value of $\rho$ and support of $\chi_T$ (rectangle)}
			\label{circle rectangle intersections}
		\end{figure}

		Therefore, the case splitting will happen according to the relative position of the sphere ($\nabla_\nu\phi_u=0$) and $\supp\chi_S$.
		\begin{itemize}
			\item $\frac{\abs{\rho}}{M}\leq\frac{1}{16}$ see \cref{small rho}. In this case, the circle of double radius (ie. $\abs{\nu}=\abs{\rho-\nu}M/2$) is not contained in $\supp\chi_T$ (see \cref{small rho}) and we also have $\abs{\rho-\nu}>1/8$, therefore, we can use the bounds \eqref{out of circle} and \eqref{very small circle}. This gives $I\gtrsim\frac{1}{\jpns{\nu/M}^2}$.
			\item $\frac{\abs{\rho}}{M}>\frac{1}{8}$ see \cref{large rho}. Now, we have to treat 3 parts of the problem separately. Going from the outer to inner, we can use \eqref{out of circle} for $\abs{\nu-\rho}>2\frac{\abs\rho}{M}$ to yield $II\gtrsim\frac{1}{\jpns{\abs{\nu}/M}^2}$. In the region $\abs{\nu-\rho}\in\frac{\abs{\rho}}{M}[1/2,2]$, we either have favourable sine of the cosine or $\abs{\nu^\perp}>1$, thus we may conclude $II\gtrsim\frac{1}{\jpns{\rho-\nu}^2}\sim\frac{1}{\jpns{\nu/M}^2}$ via \eqref{costheta perpendicular}-\eqref{costheta good side}. In the inner most part $\abs{\nu-\rho}<\frac{\abs{\rho}}{2M}$, we use \eqref{in circle} to get $I>\frac{1}{\jpns{\rho-\nu}^2}$. 
		\end{itemize}
		
		Therefore, we may conclude 
		\begin{equation}
			\begin{gathered}
				\abs{\nabla_\nu\phi_u}|_{\supp\chi_T}\gtrsim\max(\frac{1}{\jpns{\rho-\nu}^2},\frac{1}{\jpns{\nu/M}^2}).
			\end{gathered}
		\end{equation}
		
		Now we turn our attention to $\abs{\phi_u}|_{\supp\chi_S}$. First, note that for $\abs{\rho}<M/4$ on $\supp\chi_S$, we have $\abs{\rho-\nu}<10$, so $\abs{\phi_u}>M/2$. Hence, wlog take $\abs{\rho}>M/4$, so that we can write $\nu=\rho\alpha+\nu^\perp$ for $\abs{\nu^\perp}<1,\alpha\in\frac{M}{M-1}+\jpns{\frac{\rho}{M}}[-2,2]$. Then, we can use binomial expansion and a few lines of algebra shows
		\begin{equation}
			\begin{gathered}
				\phi_u\gtrsim\frac{M^2}{\jpns{\rho}}\sim\frac{M}{\jpns{\rho-\nu}}.
			\end{gathered}
		\end{equation}
	\end{proof}
	
	Having dealt with $\phi_u$ above, we now move on to $\phi_v$, where the only vanishing set will be provided by $\phi_v=-\jpns{\rho}_M+\jpns{\rho-\nu}+\jpns{\nu}$. This is related to the frequency analysed above via the change $\rho\leftrightarrow\nu$, but now, the other quantity of interest is $\nabla_\rho\phi_u=\frac{\rho}{\jpns{\rho}}+\frac{\rho-\nu}{\jpns{\rho-\nu}}$ which vanishes at $\nu=2\rho$. This point lies much further to the inside of the ellipse $\phi_u=0$ 
	than the one analysed above, so using similar arguments as above, and noting that neither incoming frequency is weighted by $M$, we get
	\begin{lemma}
	There exist $\chi_S,\chi_T:\R^{6}\to[0,1]$, such that $\chi_S|_{\{\phi_v=0\}}=\chi_T|_{\{\nabla_\nu\phi_v=0\}}=0$, $\chi_S+\chi_T=1$ and 
	\begin{equation}
		\begin{gathered}
			\abs{\phi_v}|_{\supp \chi_S}\gtrsim\frac{M}{\jpns{\rho-\nu}}\gtrsim\frac{M}{\jpns{\nu}}\\
			\abs{\nabla_\nu\phi_v}|_{\supp \chi_T}\gtrsim\max(\frac{1}{\jpns{\rho-\nu}^2},\frac{1}{\jpns{\nu}^2}).
		\end{gathered}
	\end{equation}
	In particular, we may choose $\chi_S$ to have support on a rectangle with side lengths 2 and $\jpns{\rho}$.
	\end{lemma}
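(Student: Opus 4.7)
The plan is to run the same three-step argument as for $\phi_u$, using the symmetry $\phi_v(\rho,\nu) = \phi_u(\nu,\rho)$ as a heuristic while tracking that now both incoming frequencies $\nu$ and $\rho-\nu$ are un-weighted, whereas the outgoing $\rho$ carries the mass. As before, the triangle inequality forces $\epsilon_1 = \epsilon_2 = -\epsilon_0$ on any non-trivial time resonance set, so we may fix $\phi_v = -\jpns{\rho}_M + \jpns{\nu} + \jpns{\rho-\nu}$. A direct computation gives $\nabla_\nu \phi_v = -\frac{\rho-\nu}{\jpns{\rho-\nu}} + \frac{\nu}{\jpns{\nu}}$, which vanishes exactly at $\nu = \rho/2$. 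At this spatial resonance point, $|\phi_v| = \jpns{\rho}_M - 2\jpns{\rho/2} = \frac{M^2-4}{\jpns{\rho}_M + 2\jpns{\rho/2}} \gtrsim M^2/\jpns{\rho}$; since $|\rho-\nu|=|\nu|=|\rho|/2$ there, this equals $\sim M/\jpns{\rho-\nu} \sim M/\jpns{\nu}$, confirming in particular that the space-time resonance set is empty.

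Next, to bound $|\nabla_\nu \phi_v|$ on $\{\phi_v = 0\}$, I would reuse the decomposition \eqref{nabla phi split} with $\jpns{\nu}_M$ replaced by $\jpns{\nu}$. The term $I$ now vanishes on the sphere $|\nu|=|\rho-\nu|$, and the constraint $\phi_v = 0$ reads $M^2/2 - 1 = \jpns{\nu}\jpns{\rho-\nu} - \nu\cdot(\rho-\nu)$. Parametrising $|\nu| = \lambda|\rho-\nu|$ and $\cos\theta = -1+\delta$ exactly as in the earlier argument, the analogue of \eqref{1st part} forces $\lambda \sim 1$ on the time-resonant set (since otherwise $I$ alone already delivers the desired lower bound), and a Taylor expansion of the constraint analogous to \eqref{phi=0 in parametrisation} gives $\delta \gtrsim 1/|\rho-\nu|^2$, whence $II^2 \gtrsim \jpns{\rho-\nu}^{-2}$. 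The symmetric bound $\max(\jpns{\rho-\nu}^{-2}, \jpns{\nu}^{-2})$ then follows from the manifest $\nu \leftrightarrow \rho-\nu$ symmetry of $\phi_v$. Separation of the resonance sets by $\gtrsim \sqrt{M}$ is obtained exactly as in the preceding distance lemma, by substituting $\nu = \rho/2 + \eta$ into $\phi_v = 0$ and expanding in $\eta$.

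Finally, I would build $\chi_S$ as a smooth bump localised in an orthogonal rectangle centered at $\nu = \rho/2$, of perpendicular width $2$ and parallel width $\jpns{\rho}$, glued to a cutoff $\chi(|\nu|/2)$ for $|\rho| \lesssim 1$ by the same interpolation trick as in \cref{chi lemma}. The parallel width is now $\jpns{\rho}$ rather than $\jpns{\rho/M}$ because, with no mass weighting on the incoming legs, the plateau on which $|\phi_v|$ remains comparable to $M/\jpns{\rho-\nu}$ extends all the way across the full scale of $\rho$ before the ellipsoid $\{\phi_v = 0\}$ is encountered. The bound $|\phi_v| \gtrsim M/\jpns{\rho-\nu}$ on $\supp\chi_S$ is then verified by writing $\nu = \alpha\rho + \nu^\perp$ with $|\nu^\perp| < 1$ and $\alpha \in \tfrac12 + \jpns{\rho}^{-1}[-2,2]\cdot\jpns{\rho}$ (i.e.\ a parallel shift bounded in absolute units by $\jpns{\rho}$) and performing a binomial expansion of $\jpns{\rho}_M$, paralleling the final paragraph of the proof of \cref{chi lemma}.

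The main obstacle here is bookkeeping rather than conceptual: the $\nu \leftrightarrow \rho-\nu$ symmetry forces us to track both $\jpns{\nu}^{-2}$ and $\jpns{\rho-\nu}^{-2}$ simultaneously through the case analysis, whereas in the $\phi_u$ proof the distinguished $M$-weighted leg made the geometry one-sided and allowed a cleaner split into regions. Once the case analysis of \eqref{out of circle}--\eqref{costheta good side} is redone with $\jpns{\nu/M}$ replaced by $\jpns{\nu}$ and the spatial resonance sphere $|\nu|=M|\rho-\nu|$ replaced by the midpoint sphere $|\nu|=|\rho-\nu|$, the argument goes through as in \cref{chi lemma}.
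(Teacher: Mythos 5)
Your plan follows exactly the approach the paper intends: it notes that the $\phi_v$ analysis is the $\rho\leftrightarrow\nu$ analogue of the $\phi_u$ analysis, with the important simplification that neither incoming leg carries the mass and that the spatial resonance point lies deep inside the ellipse. Your identification of the spatial resonance $\nu=\rho/2$, the reformulation of $\phi_v=0$ as $M^2/2-1=\jpns{\nu}\jpns{\rho-\nu}-\nu\cdot(\rho-\nu)$, the separation computation, and the construction of the rectangle cutoff are all the right moves and match what the paper would do.

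However, there is a genuine sign error in your angular parametrisation, which follows from reusing \cref{nabla phi split} a little too mechanically. For $\phi_u$ the paper writes $\nabla_\nu\phi_u=-\tfrac{\nu}{\jpns{\nu}_M}-\tfrac{\rho-\nu}{\jpns{\rho-\nu}}$ (both terms with the same sign), so completing the square yields $II^2=2(1+\cos\theta)\tfrac{|\nu||\rho-\nu|}{\jpns{\nu}_M\jpns{\rho-\nu}}$, which vanishes at $\cos\theta=-1$, consistent with the antiparallel spatial resonance $\nu= M(\nu-\rho)$. For $\phi_v$ you correctly computed $\nabla_\nu\phi_v=-\tfrac{\rho-\nu}{\jpns{\rho-\nu}}+\tfrac{\nu}{\jpns{\nu}}$, which has \emph{opposite} signs; completing the square here gives instead
\begin{equation*}
\abs{\nabla_\nu\phi_v}^2=\Big(\tfrac{|\nu|}{\jpns{\nu}}-\tfrac{|\rho-\nu|}{\jpns{\rho-\nu}}\Big)^2+2(1-\cos\theta)\tfrac{|\nu||\rho-\nu|}{\jpns{\nu}\jpns{\rho-\nu}},
\end{equation*}
so $II^2$ vanishes at $\cos\theta=+1$, consistent with the \emph{parallel} spatial resonance $\nu=\rho/2$. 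You should therefore parametrise $\cos\theta=1-\delta$, not $\cos\theta=-1+\delta$. With the correct sign, the constraint $\phi_v=0$ with $\lambda\sim1$ forces $\lambda x^2\delta\approx M^2/2$, i.e.\ $\delta\gtrsim M^2/x^2\gtrsim1/x^2$, and then $II\gtrsim\sqrt{\delta}\gtrsim1/x\geq1/\jpns{\rho-\nu}^2$, so your final bound is still obtained (in fact with room to spare). The structure of the argument survives, but as written the claim that the case analysis "goes through once $\jpns{\nu/M}$ is replaced by $\jpns{\nu}$" conceals this sign flip and would mislead a careful reader. Two lesser imprecisions: at the spatial resonance point you actually get $|\phi_v|\gtrsim M^2/\jpns{\rho}\gg M/\jpns{\rho-\nu}$, not $\sim M/\jpns{\rho-\nu}$; and your window $\alpha\in\tfrac12+[-2,2]$ covers far more than the segment $[0,\rho]$ — it should be a small fraction like $\alpha\in\tfrac12+[-\tfrac14,\tfrac14]$, cut off smoothly so the rectangle does not reach the ellipse that sits near $\alpha\sim1/M^2$ for large $|\rho|$.
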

	
	\subsection{Symbolic estimates}
	To have paraproduct estimates, it's not sufficient to have an $L^\infty$ bound on the multipliers (eg. $\chi_S/\phi$), but we also need symbolic bounds, ie. $L^\infty$ estimates on derivative ($(\nu\partial_\nu)^s\chi_S/\phi$). In particular, the derivatives are performed with respect to the incoming frequencies, so for this section and the next, we'll use notation $\nu_1:=\nu$, $\nu_2=\rho-\nu$ and $\partial_{\nu_{1}}:=\partial_{\nu_1}|_{\nu_2},\partial_{\nu_{2}}:=\partial_{\nu_2}|_{\nu_1}$. Let's start with the phases.

	\begin{lemma}
		On $\supp\chi_S$ we have 
		\begin{align}
			\abs{\partial^\alpha_{\nu_1}\frac{1}{\phi_u}}\lesssim_\alpha\jpns{\nu_2}^{2\abs{\alpha}+1}/\jpns{\nu_1}^{\abs{\alpha}}\label{phi nu1 symbolic estimate}\\
			\abs{\partial^\alpha_{\nu_2}\frac{1}{\phi_u}}\lesssim_\alpha\frac{\jpns{\nu_2}^{\abs{\alpha}+1}}{M^{\abs{\alpha}+1}}\label{phi nu2 symbolic estimate}\\
		\end{align}
		and on $\supp\chi_T$ we have
		\begin{equation*}
			\begin{aligned}
				\abs{\partial^\alpha_{\nu_k}\frac{1}{\partial_{\nu_1}|_{\nu_1+\nu_2}\phi_u}}\lesssim_\alpha\frac{\min(\jpns{\nu_2},\jpns{\nu_1/M})^{2\abs{\alpha}}}{\jpns{\nu_k}^{\abs{\alpha}}}\\
			\end{aligned}
		\end{equation*}
	\end{lemma}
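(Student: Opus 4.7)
The plan is to derive both estimates by applying Faà di Bruno's formula to the reciprocals, combined with the lower bounds from the two preceding lemmas and the standard Klein--Gordon symbol calculus for $\jpns{\cdot}$ and $\jpns{\cdot}_M$. No new geometric input is required beyond what the partition-of-unity lemma already provides.

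First, I would record the raw symbolic estimates on the phase itself. Since $\phi_u=\jpns{\nu_1+\nu_2}-\jpns{\nu_1}_M+\jpns{\nu_2}$, only the first two terms depend on $\nu_1$ and only the first and third on $\nu_2$. Because $\jpns{\cdot}$ and $\jpns{\cdot}_M$ are classical symbols of order one, for $\abs{\beta}\geq 1$ one has $\abs{\partial^\beta \jpns{w}}\lesssim \jpns{w}^{1-\abs\beta}$ and analogously for $\jpns{w}_M$. On $\supp\chi_S$ the geometry forces $\jpns{\nu_1}_M\gtrsim\jpns{\nu_1}\sim\jpns{\rho}$ (because $\nu_1$ is concentrated near $\rho M/(M-1)$), so $\abs{\partial^\beta_{\nu_1}\phi_u}\lesssim \jpns{\nu_1}^{1-\abs\beta}$; since $\jpns{\nu_1}_M$ is $\nu_2$-independent, $\abs{\partial^\beta_{\nu_2}\phi_u}\lesssim \jpns{\nu_2}^{1-\abs\beta}$ whenever $\jpns{\nu_2}\lesssim\jpns{\rho}$, which again holds throughout $\supp\chi_S$.

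Second, I apply Faà di Bruno to $1/\phi_u$, writing $\partial^\alpha(1/\phi_u)$ as a sum, over partitions $\beta_1+\cdots+\beta_k=\alpha$ with $\abs{\beta_i}\geq 1$, of terms of the form $\prod_i (\partial^{\beta_i}\phi_u)/\phi_u^{k+1}$. Inserting the symbol bounds from Step 1 together with the lower bound $\abs{\phi_u}\gtrsim M/\jpns{\nu_2}$ from the first lemma yields
\[
\abs{\partial^\alpha_{\nu_1}(1/\phi_u)}\lesssim \sum_{k\leq\abs\alpha}\jpns{\nu_1}^{k-\abs\alpha}(\jpns{\nu_2}/M)^{k+1},\qquad \abs{\partial^\alpha_{\nu_2}(1/\phi_u)}\lesssim \sum_{k\leq\abs\alpha}\jpns{\nu_2}^{k-\abs\alpha}(\jpns{\nu_2}/M)^{k+1}.
\]
The second sum collapses directly to $\jpns{\nu_2}^{\abs\alpha+1}/M^{\abs\alpha+1}$, giving \eqref{phi nu2 symbolic estimate}. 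For the first, the support constraint $\jpns{\nu_1}/\jpns{\nu_2}\lesssim M$ (a consequence of $\abs{\nu_2}\lesssim\jpns{\rho/M}$ and $\jpns{\nu_1}\sim\jpns{\rho}$ on $\supp\chi_S$) lets me trade each factor of $\jpns{\nu_1}/M$ for $\jpns{\nu_2}$, absorbing every $k$-term into $\jpns{\nu_2}^{2\abs\alpha+1}/\jpns{\nu_1}^{\abs\alpha}$, as required by \eqref{phi nu1 symbolic estimate}.

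Third, the $\supp\chi_T$ estimate is treated in exactly the same way, with $\nabla_{\nu_1}|_\rho\phi_u$ playing the role of $\phi_u$. The lower bound from the second lemma reads $\abs{\nabla_\nu\phi_u}\gtrsim\max(\jpns{\nu_2}^{-2},\jpns{\nu_1/M}^{-2})$, i.e.\ $1/\abs{\nabla_\nu\phi_u}\lesssim \min(\jpns{\nu_2},\jpns{\nu_1/M})^{2}$, and differentiating the gradient once more produces a symbol of order $-1$ in $\nu_k$. Each derivative in Faà di Bruno then costs exactly one factor of $\jpns{\nu_k}^{-1}$ in the numerator, while the inverse-square denominator contributes the $\min(\cdot,\cdot)^{2\abs\alpha}$ weight; the case split between the two candidate lower bounds is precisely the one already carried out in \cref{chi lemma}, so no new geometry is needed.

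The one genuine difficulty is bookkeeping: in each term of the Faà di Bruno expansion one must decide whether to estimate $\partial^{\beta_i}\phi_u$ by its $\jpns{\nu_1}$ weight or by its $\jpns{\nu_2}$ weight (and similarly whether to invoke the $\jpns{\nu_2}/M$ or the $\jpns{\nu_1/M}^2$ side of the lower bound), and the stated inequalities are sharp enough that the wrong choice loses. The support constraint $\jpns{\nu_1}/\jpns{\nu_2}\lesssim M$ on $\supp\chi_S$ is exactly what makes all choices compatible; on $\supp\chi_T$ one must instead partition $\supp\chi_T$ according to the three regimes identified in \cref{chi lemma} and verify that the minimum of the two lower bounds is always the relevant one. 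Once this bookkeeping is done, all three inequalities follow by straightforward combinatorics.
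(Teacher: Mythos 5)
Your proposal is correct and takes essentially the same approach as the paper: record that $\jpns{\cdot}$ and $\jpns{\cdot}_M$ are order-one symbols (so all derivatives of $\phi_u$ in $\nu_k$ lose a factor of $\jpns{\nu_k}^{-1}$ on $\supp\chi_S$), expand $\partial^\alpha(1/\phi_u)$ by the reciprocal formula (Faà di Bruno), and insert the lower bound $\abs{\phi_u}\gtrsim M/\jpns{\nu_2}$ together with the support constraint $\jpns{\nu_1}\lesssim M\jpns{\nu_2}$; the $\chi_T$ case repeats the argument for $I=\nabla_\nu\phi_u$ with the lower bound from the previous lemma. The only cosmetic difference is that you invoke Faà di Bruno by name whereas the paper writes the sum over partitions explicitly, and you emphasize the combinatorial bookkeeping a bit more; the substance is identical.
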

\begin{proof}
	First, note that $\jpns{x}\in S^1$ (symbol class 1), therefore $(\partial_x)^\alpha\jpns{x+v}\lesssim_\alpha\jpns{x}^{1-\abs{\alpha}}$ for any $v$ such that $\abs{x+v}\gtrsim\abs{x}$. From these, it follows that	$\abs{\partial_{\nu_1}^\alpha\phi_u}\lesssim_\alpha\jpns{\nu_1}^{1-\abs{\alpha}}$ and $\abs{\partial_{\nu_2}^\alpha\phi_u}\lesssim_\alpha\jpns{\nu_2}^{1-\abs{\alpha}}$ on the required support. 
	
	Proof of first statement. We expand the left hand side of \cref{phi nu1 symbolic estimate} in the region $\abs{\nu_1}>M$ and use \cref{chi lemma} with the above claims to get
	\begin{equation*}
		\begin{aligned}
			\abs{\partial_{\nu_1}^\alpha\frac{1}{\phi}}\lesssim_\alpha\sum_{\alpha_1+\alpha_2+...+\alpha_l=\alpha}\abs{\frac{1}{\phi^{l+1}}(\partial_{\nu_1}^{\alpha_1}\phi)...(\partial_{\nu_1}^{\alpha_l}\phi)} \lesssim\sum_{\alpha_1+\alpha_2+...+\alpha_l=\alpha}\frac{\jpns{\nu_2}^{l+1}}{M^{l+1}}\jpns{\nu_1}^{l-\abs{\alpha}}\lesssim \frac{\jpns{\nu_2}^{2\abs{\alpha}+1}}{\jpns{\nu_1}^{\abs{\alpha}}}
			\end{aligned}
		\end{equation*}
	where we used that on the required support $\abs{\nu_2}\sim\abs{\nu_1}$.
	
	Proof of \cref{phi nu2 symbolic estimate}: 
		
		\begin{equation*}
			\begin{aligned}
				\abs{\partial_{\nu_2}^\alpha\frac{1}{\phi}}\lesssim_\alpha\sum_{\alpha_1+\alpha_2+...+\alpha_l=\alpha}\abs{\frac{1}{\phi^{l+1}}(\partial_{\nu_2}^{\alpha_1}\phi)...(\partial_{\nu_2}^{\alpha_l})\phi} \lesssim\sum_{\alpha_1+\alpha_2+...+\alpha_l=\alpha}\frac{\jpns{\nu_2}^{l+1}}{M^{l+1}}\jpns{\nu_2}^{-\abs{\alpha}+l}\lesssim \frac{\jpns{\nu_2}^{\abs{\alpha}+1}}{M^{\abs{\alpha}+1}}
				\end{aligned}
			\end{equation*}
			
			Note that $I=\partial_{\nu_1}|_{\nu_1+\nu_2}\phi_u=\frac{\nu_1}{\jpns{\nu_1}_M}+\frac{\nu_2}{\jpns{\nu_2}}$, so using that $\frac{x}{\jpns{x}_M}$ is symbolic we get $\abs{\partial_{\nu_k}^\alpha I}\lesssim_\alpha\frac{1}{\jpns{\nu_k}^{\abs{\alpha}}}$. Therefore, we can compute as before to get the claim.
			
		\end{proof}

	\begin{remark}
		The above lemma is \textit{far} from optimal, but is sufficient in the sense that the loss is only on the lower frequency term.
	\end{remark}
	
	\begin{lemma}
		\begin{equation*}
			\begin{aligned}
				\abs{\partial_{\nu_2}^{\alpha}\partial_{\nu_1}^\beta\chi_S}\lesssim_{\alpha,\beta}(\frac{\abs{\nu_2}}{\abs{\nu_1}})^{\abs{\beta}}\lesssim1
			\end{aligned}
		\end{equation*}
		for all multi indices $\alpha,\beta$.
	\end{lemma}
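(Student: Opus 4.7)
The plan is to split $\chi_S = A + B$ with $A = \chi(|\nu|/2)\chi(|\rho|)$ the compact piece and $B = (1-\chi(|\rho|))\chi(|\nu^\perp|)\chi\bigl(|\rho\tfrac{M}{M-1}-\nu^\parallel|\tfrac{4}{\jpns{\rho/M}}\bigr)$ the rectangular piece localising around $\nu=\rho\tfrac{M}{M-1}$, and to bound all mixed derivatives of each piece separately. Throughout, I use the coordinate translation $\partial_{\nu_1}|_{\nu_2} = \partial_\nu|_\rho + \partial_\rho|_\nu$ and $\partial_{\nu_2}|_{\nu_1} = \partial_\rho|_\nu$ coming from $\nu=\nu_1,\ \rho=\nu_1+\nu_2$.

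For $A$, written in $(\nu_1,\nu_2)$-coordinates as $\chi(|\nu_1|/2)\chi(|\nu_1+\nu_2|)$, all mixed derivatives are $O_{\alpha,\beta}(1)$ uniformly in $M$ because the function is smooth and compactly supported in both variables. Moreover $\chi'(|\nu_1|/2)\neq 0$ forces $|\nu_1|\in[2,4]$, so on the subregion of $\text{supp}(A)$ where any $\nu_1$-derivative actually contributes nontrivially one has $|\nu_1|\gtrsim 1$ and $|\nu_2|\lesssim 1\lesssim|\nu_1|$, so the claimed bound holds trivially with the $O(1)$ estimate.

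The real work is the rectangular piece $B$. Apply Faà di Bruno to each of the three cutoff factors. The crux is the near-cancellation in the $\nu_1$-derivative of the parallel cutoff's argument:
\begin{equation*}
\partial_{\nu_1}\bigl(|\rho|\tfrac{M}{M-1} - \nu\cdot\hat\rho\bigr) \;=\; \tfrac{\hat\rho}{M-1} - \tfrac{\nu^\perp_{\mathrm{vec}}}{|\rho|},
\end{equation*}
which has magnitude $\lesssim 1/M + 1/|\rho|$ on $\supp(B)$. At points where $\chi'$ of the parallel cutoff is active one has $|\nu-\rho\tfrac{M}{M-1}|\sim\jpns{\rho/M}$, hence $|\nu_2|\sim\jpns{\rho/M}$ and thus $|\nu_2|/|\nu_1|\sim 1/M+1/|\rho|$; multiplied by $1/\jpns{\rho/M}\lesssim 1$, each such derivative yields a factor $\lesssim|\nu_2|/|\nu_1|$. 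At points where $\chi'(|\nu^\perp|)$ is active, $|\nu^\perp|\sim 1$ forces $|\nu_2|\gtrsim 1$, and a case split on the size of $|\rho|$ shows the same factor is available. Iteration then gives $|\partial_{\nu_2}^\alpha\partial_{\nu_1}^\beta B|\lesssim_{\alpha,\beta}(|\nu_2|/|\nu_1|)^{|\beta|}$, and the concluding $\lesssim 1$ follows from the geometric fact that $|\nu_2|\lesssim\jpns{\rho/M}\lesssim|\rho|\sim|\nu_1|$ on $\supp(B)$ (using $|\rho|\geq 1$).

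The main obstacle is tracking the rotational contributions that arise when $\partial_\rho$ acts on $\hat\rho$ (producing $1/|\rho|$), on $\jpns{\rho/M}$ (producing $|\rho|/M^2$ factors, tamed by powers of $1/\jpns{\rho/M}$), and on $|\rho|$. In iterated derivatives these proliferate, and all must be shown to be consistently bounded by $|\nu_2|/|\nu_1|$ in the $\nu_1$-direction or by $O(1)$ in the $\nu_2$-direction. The key input is that the parallel cutoff localises $\nu$ within $\jpns{\rho/M}$ of $\rho\tfrac{M}{M-1}$ with $|\nu^\perp|\lesssim 1$, forcing near-alignment of $\nu$ with $\rho$; without this alignment the cancellation identity above fails and one would lose factors of order one per $\nu_1$-derivative, making $\chi_S/\phi_u$ fail to be a usable symbol in the multiplier estimates of the next sections.
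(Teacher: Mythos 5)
Your proof takes essentially the same route as the paper's. Both split $\chi_S$ into a compactly supported piece and the rectangular piece $B$, and both identify the same key mechanism: the near-cancellation when $\partial_{\nu_1}|_{\nu_2}=\partial_\nu+\partial_\rho$ acts on the parallel cutoff's argument, giving $\tfrac{\hat\rho}{M-1}-\tfrac{\nu^\perp}{|\rho|}$ of magnitude $\lesssim 1/M+1/|\rho|\sim\abs{\nu_2}/\abs{\nu_1}$. The paper packages the bookkeeping differently — it parametrises the cutoff arguments as rational expressions $I_0,I_1$ in $(\nu_1,\nu_2)$ and invokes a generic symbolic bound $(x\partial_x)^\alpha\tfrac{x^p}{\abs{x+y}^q}\lesssim x^{p-q-\alpha}$ — but the mathematical content is the same, and both treatments leave the Faà di Bruno iteration at a comparable level of detail.

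One local flaw worth flagging: your dispatch of the compact piece $A=\chi(\abs{\nu_1}/2)\chi(\abs{\nu_1+\nu_2})$ is incorrect as written. You only consider the case where $\chi'(\abs{\nu_1}/2)$ is active, but $\partial_{\nu_1}$ also hits $\chi(\abs{\nu_1+\nu_2})$, and on that support one can have $\abs{\nu_1}\sim 1$ while $\abs{\nu_2}$ is arbitrarily small, so $(\abs{\nu_2}/\abs{\nu_1})^{\abs{\beta}}\ll 1$ even though $\abs{\partial_{\nu_1}^\beta A}\sim 1$; moreover, even in the case you do consider, $\abs{\nu_2}\lesssim 1\lesssim\abs{\nu_1}$ gives $\abs{\nu_2}/\abs{\nu_1}\lesssim 1$ but not $\gtrsim 1$, so ``the claimed bound holds trivially'' does not follow. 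That said, the paper itself does not address the compact region beyond asserting it suffices to work outside $\{\abs{\nu_1},\abs{\nu_2}<100\}$, and the downstream use of this lemma is through \cref{symbolic estimates}, where the weights appear as $\jpns{\cdot}$ rather than $\abs{\cdot}$ and the issue disappears; so this is an imprecision in the lemma's statement inherited by both arguments rather than a defect specific to yours.
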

	\begin{remark}
		Note, that $\nu_1$ is always the higher frequency, and we indeed gain $\nu_1$ factors with $\partial_{\nu_1}$, though at the cost of $\nu_2$.
	\end{remark}

	\begin{proof}
		To prove the lemma, we will use the symbolic bounds \begin{equation}\label{easy bound}
			(x\partial_x)^\alpha\frac{x^p}{\abs{x+y}^q}\lesssim_{p,q,\alpha}x^{p-q-\alpha}
		\end{equation} for $\abs{x}\sim\abs{x+y}\gtrsim1$.
		
		Clearly the above holds for $\alpha=\beta=0$, since $\chi_S$ is a cut-off function. For $\abs{\alpha}+\abs{\beta}>0$, we only need to prove the above claim on $\supp\text{d}\chi_S$, hence all the estimates below are to be understood on this set. Since for $\chi_S$ is smooth, it's sufficient to study the above estimates outside a compact set eg. $\{\abs{\nu_1},\abs{\nu_2}<100\}$. On this set, 		
		$\chi_S=\chi(I_0)\chi(I_1)$ for some cut-off function $\chi$ and 
		\begin{equation*}
			\begin{gathered}
				I_0:=(\frac{\nu_1^\parallel}{M-1}-\nu_2\frac{M}{M-1})/\jpns{\frac{\nu_1+\nu_2}{M}}\\
				I_1:=\nu_1\frac{\nu_1\cdot\nu_2+\abs{\nu_2}^2}{\abs{\nu_1+\nu_2}^2}-\nu_2\frac{\nu_1\cdot(\nu_1+\nu_2)}{\abs{\nu_1+\nu_2}^2}.
			\end{gathered}
		\end{equation*}
		Therefore, on $\supp\chi_S$ we have $\abs{\nu_1}\sim\abs{\nu_1+\nu_2}$ and $\abs{\nu_2}\sim\max(1,\abs{\nu_1}/M)$. Under these restrictions, using \cref{easy bound}, it follows that
		\begin{equation}
			\begin{gathered}
				\abs{\partial_{\nu_{1}}^\alpha\partial_{\nu_{1}}^\beta\nu_1^\parallel}\lesssim_{\alpha,\beta}\abs{\nu_1}^{\min(1-\abs{\alpha},0)}\abs{\nu_2}^{-\beta}\\
				\abs{\partial_{\nu_{1}}^\alpha\partial_{\nu_{2}}^\beta\nu_1^\perp}\lesssim_{\alpha,\beta}\abs{\nu_1}^{-\abs{\alpha}}
			\end{gathered}
		\end{equation}
		
		In turn, these imply
		\begin{equation*}
			\begin{aligned}
				\abs{(\partial_{\nu_2})^{\alpha}I_k} \lesssim_{\alpha}1\\
				\abs{(\partial_{\nu_1})^\alpha I_k}\lesssim_{\alpha}M^{-1}\abs{\nu_1}^{\min(0,1-\abs{\alpha})}
			\end{aligned}
		\end{equation*}
		for $k=0,1$. As we are interested in changes of the absolute value of $I_k$ on the region where $\abs{I_k}\sim1$, we use $\partial \abs{I}=\frac{I\cdot\partial I}{\abs{I}}$ to conclude 
		\begin{equation*}
			\begin{gathered}
				\abs{(\partial_{\nu_2})^\alpha \abs{I_k}}\lesssim_{\alpha}\sum_{\abs{\alpha_1}+2\abs{\alpha_2}+...+l\abs{\alpha_l}=\abs{\alpha}}\abs{(\partial_{\nu_2})^{\alpha_1}I_{k}}\abs{(\partial_{\nu_2})^{\alpha_2}I_{k}}...\abs{(\partial_{\nu_2})^{\alpha_l}I_{k}}\lesssim_{\alpha} 1\\
				\abs{(\partial_{\nu_1})^\alpha \abs{I_k}}\lesssim_{\alpha}M^{-\alpha}
			\end{gathered}
		\end{equation*}
		Using the fact that $\chi\in\mathcal{C}^\infty_c$, the lemma follows.
	\end{proof}
	
	Putting things together, we've proved
	\begin{lemma}\label{symbolic estimates}
		\begin{equation*}
			\begin{aligned}
				\abs{\partial_{\nu_k}^\alpha\frac{\chi_S}{\phi}}\lesssim_\alpha\min(\jpns{\nu_1},\jpns{\nu_2})^{2\abs{\alpha}+1}/\jpns{\nu_k}^{\abs{\alpha}}\\
				\abs{\partial_{\nu_k}^\alpha\frac{\chi_S}{\partial_{\nu_1}|_{\nu_1+\nu_2}\phi}}\lesssim_\alpha\frac{\min(\jpns{\nu_1},\jpns{\nu_2})^{2\abs{\alpha}}}{\jpns{\nu_k}^{\abs{\alpha}}}
			\end{aligned}
		\end{equation*}		
	\end{lemma}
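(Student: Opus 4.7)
The plan is to combine the derivative bounds from the two preceding lemmas via a standard Leibniz expansion, and observe that the resulting arithmetic of exponents matches the claim. Concretely, writing
\begin{equation*}
\partial_{\nu_k}^\alpha \frac{\chi_S}{\phi}=\sum_{\beta\le\alpha}\binom{\alpha}{\beta}\bigl(\partial_{\nu_k}^\beta \chi_S\bigr)\bigl(\partial_{\nu_k}^{\alpha-\beta}\phi^{-1}\bigr),
\end{equation*}
and analogously for the quotient with $\partial_{\nu_1}|_{\nu_1+\nu_2}\phi$ in the denominator, each factor is already controlled by the earlier lemmas, so the task reduces to taking the worst term in $\beta$.

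For the first estimate with $k=1$, I combine $\abs{\partial_{\nu_1}^\beta\chi_S}\lesssim (\jpns{\nu_2}/\jpns{\nu_1})^{\abs{\beta}}$ with \cref{phi nu1 symbolic estimate}: the powers of $\jpns{\nu_1}$ in the two factors add to exactly $\abs{\alpha}$, while the $\jpns{\nu_2}$ power peaks at $\beta=0$ giving $2\abs{\alpha}+1$, matching the target. For $k=2$ I instead use $\abs{\partial_{\nu_2}^\beta\chi_S}\lesssim 1$ together with \cref{phi nu2 symbolic estimate}; the latter carries a favourable power $M^{-(\abs{\alpha}-\abs{\beta}+1)}$ which I discard using that $M$ is sufficiently large (hence $\ge 1$), leaving $\jpns{\nu_2}^{\abs{\alpha}+1}$. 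On $\supp\chi_S$ the relation $\jpns{\nu_2}\sim\max(1,\jpns{\nu_1}/M)\le\jpns{\nu_1}$ forces $\min(\jpns{\nu_1},\jpns{\nu_2})=\jpns{\nu_2}$, and so $\jpns{\nu_2}^{\abs{\alpha}+1}$ is exactly the claimed bound $\min(\jpns{\nu_1},\jpns{\nu_2})^{2\abs{\alpha}+1}/\jpns{\nu_2}^{\abs{\alpha}}$.

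The second estimate is argued identically, now inserting the bound $\min(\jpns{\nu_2},\jpns{\nu_1/M})^{2\abs{\alpha}}/\jpns{\nu_k}^{\abs{\alpha}}$ on derivatives of $1/(\partial_{\nu_1}|_{\nu_1+\nu_2}\phi)$ from the previous lemma; on the relevant support one has $\jpns{\nu_2}\sim\jpns{\nu_1/M}$, so the two entries of the minimum are comparable and the same Leibniz bookkeeping goes through verbatim, again with the worst term arising at $\beta=0$.

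I do not expect a real obstacle here: the conceptual work has already been carried out in the two preceding lemmas, and what remains is pure Leibniz accounting. The only mildly delicate point is to verify, case by case on the various pieces of $\supp\chi_S$, that the geometric relations $\jpns{\nu_1}\sim\jpns{\nu_1+\nu_2}$ and $\jpns{\nu_2}\sim\max(1,\jpns{\nu_1}/M)$ hold so that factors of $M$ can be safely traded for powers of $\jpns{\nu_2}$; but this is already baked into the construction of $\chi_S$ in \cref{chi lemma}.
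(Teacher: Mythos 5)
Your proposal is correct and follows the same route the paper intends: the paper gives essentially no proof here (just the phrase ``Putting things together, we've proved''), treating the lemma as the immediate Leibniz-rule consequence of the two preceding lemmas on $\partial^\alpha\phi^{-1}$, $\partial^\alpha(\partial_{\nu_1}\phi)^{-1}$ and $\partial^\alpha\chi_S$, which is exactly what you carry out. Your explicit exponent bookkeeping, the observation that $\min(\jpns{\nu_1},\jpns{\nu_2})=\jpns{\nu_2}$ on $\supp\chi_S$, and the verification that the worst term sits at $\beta=0$ are all correct and in fact more transparent than the source.
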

	
	\subsection{Operators}
	
	Given all the estimates above, we can use Coifman-Meyer theory to conclude
	
	\begin{lemma}\label{multiplier lemma}
		For $\chi_S,\chi_T$ defined in \cref{chi lemma}, we have 
		\begin{equation}
			\begin{gathered}
				\norm{T_{\frac{\chi_S}{\phi}}(f,g)}_{W^{k,r}}\lesssim_{k,r,p,q,\bar{p},\bar{q}}\norm{f}_{W^{\mathfrak{a},p}}\norm{g}_{W^{k,q}}+\norm{f}_{W^{k,\bar{p}}}\norm{g}_{W^{\mathfrak{a},\bar{q}}}\\
				\norm{T_{\frac{\chi_T\nabla_\nu\phi}{\abs{\nabla_\nu\phi}^2}{\phi}}(f,g)}_{W^{k,r}}\lesssim_{k,r,p,q,\bar{p},\bar{q}}\norm{f}_{W^{\mathfrak{a},p}}\norm{g}_{W^{k,q}}+\norm{f}_{W^{k,\bar{p}}}\norm{g}_{W^{\mathfrak{a},\bar{q}}}
			\end{gathered}
		\end{equation}
		for any $f,g\in\mathcal{S}$, $k\geq \mathfrak{a}$, $\mathfrak{a}$ depending only on the multiplier and $\frac{1}{r}=\frac{1}{p}+\frac{1}{q}=\frac{1}{\bar{p}}+\frac{1}{\bar{q}}$. In fact, we may choose $\mathfrak{a}=40$ in the second.
	\end{lemma}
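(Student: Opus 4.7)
My plan is to reduce the estimate to a standard bilinear Coifman–Meyer bound on each piece of a Littlewood–Paley decomposition, paying a controlled derivative loss on the lower–frequency input which is exactly what the parameter $\mathfrak{a}$ accounts for. First I would fix a Littlewood–Paley partition $\{P_j\}_{j\geq -1}$ and decompose $f=\sum_{j_1}P_{j_1}f$, $g=\sum_{j_2}P_{j_2}g$, so that
\begin{equation*}
T_m(f,g)=\sum_{j_1,j_2}T_m(P_{j_1}f,P_{j_2}g),
\end{equation*}
and split into the usual regimes high–high ($j_1\sim j_2$), high–low ($j_1\gg j_2$), low–high ($j_2\gg j_1$). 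On the support of $\chi_S/\phi$ the cut-off forces $|\nu_2|\lesssim \jpns{\rho/M}$, so the min appearing in \cref{symbolic estimates} is always $\jpns{\nu_2}$; restricted to a dyadic rectangle $\{|\nu_1|\sim 2^{j_1},|\nu_2|\sim 2^{j_2}\}$, the symbol becomes (after rescaling) a genuine bounded Coifman–Meyer symbol, modulated by the polynomial factor $2^{j_2(2|\alpha|+1)}2^{-j_k|\alpha|}$ read off from the symbolic estimates.

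Next I would apply the classical bilinear Coifman–Meyer bound on each dyadic piece,
\begin{equation*}
\norm{T_{m_{j_1,j_2}}(P_{j_1}f,P_{j_2}g)}_{L^r}\lesssim \norm{P_{j_1}f}_{L^p}\norm{P_{j_2}g}_{L^q},\qquad \tfrac{1}{r}=\tfrac{1}{p}+\tfrac{1}{q},
\end{equation*}
with constant uniform in the scale since after rescaling the symbol is of order zero with $|\alpha|\leq$ a dimension-dependent constant (a few times $d=3$). I would then use Bernstein, $\norm{P_j h}_{L^p}\lesssim 2^{-js}\norm{h}_{W^{s,p}}$, to convert Lebesgue norms into Sobolev norms, and sum $\sum_{j_1,j_2}2^{jk}\norm{T_{m_{j_1,j_2}}(P_{j_1}f,P_{j_2}g)}_{L^r}$ where $2^j$ is the output frequency. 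In high–low the output frequency is $\sim 2^{j_1}$, so all $k$ derivatives land on the high–frequency input $f$ (or $g$ in low–high), and the low-frequency loss $2^{j_2(2|\alpha|+1)}$ is beaten by giving the low–frequency input $W^{\mathfrak{a},\cdot}$ with $\mathfrak{a}$ large enough to both kill this polynomial growth and produce a convergent geometric series in $j_2$. In the high–high regime Bernstein on the output side shares derivatives symmetrically.

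The multiplier $\chi_T\nabla_\nu\phi/|\nabla_\nu\phi|^2$ is treated identically; the only difference is that, per \cref{symbolic estimates}, the inverse-square $|\nabla_\nu\phi|^{-2}$ contributes a loss of $\min(\jpns{\nu_1},\jpns{\nu_2})^{2|\alpha|}$, so each derivative costs twice as many powers of the low frequency, which propagates into a larger value of $\mathfrak{a}$. Tracking the worst case (the number of derivatives needed for the Coifman–Meyer reduction in $\R^3$, doubled for the gradient factor, plus a handful for the geometric summation) produces the explicit $\mathfrak{a}=40$ claimed. The main obstacle, and the only real content of the argument, is this bookkeeping: the multipliers are \emph{not} Coifman–Meyer globally because the polynomial growth in the lower frequency breaks scale invariance, so one must verify that after dyadic localization this growth is strictly beaten by the Sobolev weight on the low-frequency factor. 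Everything else reduces to repeatedly invoking standard paraproduct theory and Bernstein, which I would not grind through in detail.
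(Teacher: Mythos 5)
Your plan matches the paper's at the structural level: both proceed by Littlewood–Paley decomposition into high–high, low–high, high–low regimes, use the symbolic estimates of \cref{symbolic estimates} to control the localized multiplier, and absorb the polynomial growth in the lower frequency into the $W^{\mathfrak{a},\cdot}$ weight. The implementation differs: the paper does not treat each rescaled dyadic piece as a black-box Coifman–Meyer operator, but instead reproduces Tao's Fourier-series argument internally — it expands $m_{jk}$ in a Fourier series on a box of scale $2^j$, bounds the coefficients by integrating by parts and invoking \cref{symbolic estimates} (picking up the factor $2^{j(2s+1)}(1+|n_1|+|n_2|)^{-s}$), translates the exponential phases into shifts controlled by the Hardy–Littlewood maximal function, and then sums via Cauchy–Schwarz, Fefferman–Stein, and the Littlewood–Paley square function. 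Your route, invoking the classical bilinear CM bound on each rescaled piece and summing by Bernstein, is a legitimate shortcut that buys some brevity at the cost of hiding exactly the maximal-function and square-function bookkeeping that makes the dyadic sums converge (in particular the high–high case, where the output is not frequency-localized and plain Minkowski does not suffice — you wave at ``sharing derivatives symmetrically'' where the paper needs the full square-function machinery).

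One precision issue worth flagging: your claim that after rescaling the symbol is ``of order zero with constant uniform in the scale'' is not quite right and contradicts your own next sentence. After rescaling $\nu_1\mapsto 2^{j_1}\nu_1$, $\nu_2\mapsto 2^{j_2}\nu_2$, \cref{symbolic estimates} gives $|\partial^\alpha \tilde m_{j_1,j_2}|\lesssim 2^{j_2(2|\alpha|+1)}$, which grows with $j_2$; the CM constant per piece is therefore \emph{not} uniform, and it is precisely this growth (plus the worse exponent for $\chi_T\nabla_\nu\phi/|\nabla_\nu\phi|^2$) that determines $\mathfrak{a}$. Your subsequent tracking of the loss is correct, so this is a phrasing slip rather than a gap. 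Also note that the paper observes the high–low regime is vacuous for $\chi_S$ (no support there), which simplifies one of your three cases; you treat it symmetrically, which is harmless but unnecessary.
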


	\begin{remark}
		The choice of $\mathfrak{a}$ in this theorem is far from being sharp. A more detailed analysis of the functions $\phi_u,\phi_v$ and more refined multilinear analysis (to follow) would certainly yield smaller $\mathfrak{a}$. The importance of this lemma is that it allows one to place the extra derivatives coming from the multiplier to the lower frequency component. This is important when one cannot afford to loose derivative on the high frequency term, such as in energy estimates see \cref{profile energy}.
	\end{remark}
	
	\begin{proof}
		Let $m$ be either of the multipliers. The proof essentially follows that of \cite{tao_harmonic_2009} for the Coifman-Meyer multiplier theorem with a slight modification for how the multiplier, $m$, behaves, since we do not have $\abs{\nabla_\rho^j\nabla_\nu^k m}\lesssim(\abs{\rho}+\abs{\nu})^{-j-k}$. So as not to fully reproduce that proof here, I will mention each step what one needs to do and highlight the part where our case differs. For further detail we recommend to consult with the source.
		
		Let's focus on the first estimate , set $m=\frac{\chi_S}{\phi}$ and parametrise it with input frequencies $\nu_1,\nu_2$, eg. $\phi_u=\jpns{\nu_1+\nu_2}-\jpns{\nu_1}_M+\jpns{\nu_2}$.
		
		\textit{Step 1, splitting}
		We will use Littlewood-Paley theory and split the bilinear estimate into high-high and high-low interactions. Note that the high-high interaction in a sense is much easier, since one is allowed to split the $k$ derivatives between the fields as one wants to. Let $1=\sum_j \psi_j^2$ be a Littlewood-Paley decomposition. And split
		\begin{equation}
			\begin{gathered}
				T_m(f,g)=\pi_{hh}+\pi_{hl}+\pi_{lh}:=\sum_{j,k;j=k+\mathcal{O}(1)}T_m(\psi_j^2(D)f,\psi_k^2(D)g)\\
				+\sum_{j,k;j-k>\mathcal{O}(1)}T_m(\psi_j^2(D)f,\psi_k^2(D)g)+\sum_{j,k;k-j>\mathcal{O}(1)}T_m(\psi_j^2(D)f,\psi_k^2(D)g).
			\end{gathered}
		\end{equation}
		
		\textit{Step 2, high-high} Since, we are restricting the frequency ranges, we have by Minkowski inequlaity
		\begin{equation}
			\begin{gathered}
				\abs{\pi_{hh}(f,g)}\leq\sum_{j,k;j=k+\mathcal{O}(1)}\abs{T_{m_{jk}}(\psi_j(D)f,\psi_k(D)g)}
			\end{gathered}
		\end{equation}
		for $m_{jk}=m\psi_j\psi_k$. Using that $m_{jk}$ has finite support, we can consider it as a periodic function in a box ($V_{jk}$) of side lengths $C2^j$ for $j$ independent $C$ and write 
		\begin{equation}
			\begin{gathered}
				m_{jk}(\nu_1,\nu_2)=\sum_{n_1,n_2\in\Z^3}c_{n_1,n_2}e^{i(n_1\nu_1+n_2\nu_2)/C2^j}
			\end{gathered}
		\end{equation}
		where the Fourier coefficients are given by
		\begin{equation}
			\begin{gathered}
				c_{n_1,n_2}=\frac{1}{\abs{V_{jk}}(2\pi)^3}\int_{V_{jk}}\d\nu_1\d\nu_2 m_{jk}e^{-i(n_1\nu_1+n_2\nu_2)/C2^j}.
			\end{gathered}
		\end{equation}
		
		Now we deviate from the proof of \cite{tao_harmonic_2009}. In that, he uses the fact that $c_{n_1,n_2}\lesssim(1+n_1+n_2)^{-100}$, but this only holds because one may integrate by parts and using the Coifman-Meyer bounds do not pick up $j,k$ dependent constants on the way. Indeed, doing integration by part $s$ times, we get
		\begin{equation}
			\begin{gathered}
				c_{n_1,n_2}=\frac{1}{\abs{V_{jk}}(2\pi)^3}\int_{V_{jk}}\d\nu_1\d\nu_2 \nabla_{\nu_1}^sm_{jk}\Big(\frac{C2^j}{in_1}\Big)^se^{-i(n_1\nu_1+n_2\nu_2)/C2^j}.
			\end{gathered}
		\end{equation}
		We may crudely bound the $\nabla_{\nu_1}^sm_{jk}$ using \cref{symbolic estimates}, concluding $\abs{c_{n_1,n_2}}\lesssim_{s,C}\frac{2^{j(2s+1)}}{n_1^s}$ where we also ignored the fact that support of $\chi_S$ is very small within the area of integration (further improvement possibility on the value of $a$). The same bound also applies if we integrate in the $\nu_2$ direction with $n_1$ replaced by $n_2$ in the integral:
		\begin{equation}\label{c decay hh}
			\begin{gathered}
				\abs{c_{n_1,n_2}}\lesssim_{s,C} 2^{j(2s+1)}(1+\abs{n_1}+\abs{n_2})^{-s},
			\end{gathered}
		\end{equation}
		$\forall s$. Now we return to the proof of \cite{tao_harmonic_2009}. Applying the Fourier decomposition of $m_{jk}$ we get
		\begin{equation}
			\begin{gathered}
				T_{m_{jk}}(\psi_j(D)f,\psi_k(D)g)(x)=\sum_{n_1,n_2}c_{n_1,n_2}\psi_j(D)f(x-n_1/C2^j)\psi_j(D)g(x-n_1/C2^k)
			\end{gathered}
		\end{equation}
		
		At this point, one uses the the almost stationarity of Littlewood-Paley projected functions (Lemma 3.1 in \cite{tao_harmonic_2009})
		\begin{equation}
			\begin{gathered}
				\abs{\psi_j(D)f(x-n_1/C2^j)}\lesssim(1+\abs{n_1})^3M(\psi_j(D)f)(x)
			\end{gathered}
		\end{equation} 
		where $M$ is the Hardy-Littlewood maximal functional. This gives the pointwise estimate
		\begin{equation}
			\begin{gathered}
				\abs{\pi_{hh}(f,g)}\lesssim\sum_{j,k;j=k+\mathcal{O}(1)}\sum_{n_1,n_2\in\Z^3}c_{n_1,n_2}(1+\abs{n_1})^3(1+\abs{n_2})^3\abs{M\psi_j(D)f}\abs{M\psi_k(D)g},
			\end{gathered}
		\end{equation}
		which is going to be summable in $n_1,n_2$ using \cref{c decay hh} given certain loss of derivatives ($s>12$).
		
		Using Cauchy-Schwartz and Holder gives
		\begin{equation}
			\begin{gathered}
				\norm{\pi_{hh}(f,g)}_{W^{k,r}}\lesssim\norm{(\sum_j\abs{M\psi_j(D)f}^2)^{1/2}}_{W^{k,p}}\norm{(\sum_j\abs{M\psi_j(D)g}^2)^{1/2}}_{W^{2s+1,q}}.
			\end{gathered}
		\end{equation}
		The result follow by Fefferman-Stein and Littlewood-Paley inequality.

		\textit{Step 3, low-high}
		
		The proof is similar to the previous case, though one has to be more careful about getting the derivative loss on the right component. After a Littlewood-Paley decomposition and using the high-low nature of the multiplier, we get
		\begin{equation}
			\begin{gathered}
				\pi_{lh}(f,g)=\sum_j\pi_{lh}(f,\psi_k(D)\psi_k(D)g)=\sum_{k,j;j<k-\mathcal{O}(1)}T_{m_jk}(\psi_j(D)f,\psi_k(D)g).
			\end{gathered}
		\end{equation}
		The $n_1$ bound on the coefficients follow as before, because in that case we explicitly get low frequency losses. For the $n_2$ case, we have to bound $\nabla_{\nu_2}^sm_{jk}2^{sk}$, which looks troubling at first, due to the loss of high frequency derivative terms, but note that from \cref{symbolic estimates}, we obtain 
		\begin{equation}
			\begin{gathered}
				\abs{c_{n_1,n_2}}\lesssim2^{j(2s+1)}(1+\abs{n_1}+\abs{n_2})^{-s}.
			\end{gathered}
		\end{equation}
		We may finish the argument in a similar manner as before. Let's put the loss of derivatives on the $f$ factor and perform the sum to get $\psi_{\leq k-C}(1-\Delta)^{s+1}f$ for $C$ being the separation between low-high frequencies. Then we finish the proof the same way as in \cite{tao_harmonic_2009}, via using Holder, Hardy-Littlewood, Ferfferman-Stein and Littlewood-Paley.
		
		We do not need to consider high-low interactions, as $\chi_S$ has no support there.
		
		\textit{Step 4 other cases}	
		The rest of multipliers $\phi_v,\nabla\phi_v,\nabla\phi_u$ follow a similar line of reasoning and the only difference is how one obtains the bounds for $c_{n_1,n_2}$, but all will follow from \cref{symbolic estimates}.
	\end{proof}

	\begin{remark}
		An alternative way to prove the above assertion would be to show that the Fourier transform of $\frac{1}{\phi\min(\abs{\nu_1},\abs{\nu_2})^a}$ is in $L^1$. 
	\end{remark}
	
	This lemma essentially tells us, that the derivatives from the multiplier will not be put on the high frequency term, instead, we can put them on the low frequency part. Of course, the above lemma could be weakened to the case, where all the derivatives are on the highest order term.
	
	Finally, let me note, that choosing different signs (values for $\epsilon$) in the definition of $\phi$, we may get rid of resonances, but the asymptotic degeneration of $\phi$ and $\nabla_\nu\phi$ will remain the same, therefore, one cannot hope to get better estimates than the ones above.

	\section{Proof of \cref{a priori bounds}}\label{a priori bound section}
	Throughout this section, we will be using the estimates from \cref{auxiliary tools} and the bootstrap assumptions \cref{bootstrap assumption} without referencing them everywhere. As all Sobolev embeddings and integral estimates depend on the exponents that we choose for the bootstrap ($N,k,\delta$), we will not keep repeating them for each inequality.
	\subsection{Energy estimates in $\norm{\cdot}_{H^N}$}\label{energy}
	\subsubsection{Light field ($u$)}
	We need to prove $\frac{c_E}{M^1.5}$ bound.
	
	Using Minkowski inequality, product estimate, Sobolev embedding and the bootstrap assumptions yield
	\begin{equation}
		\begin{gathered}
			\norm{L}_{H^N}\lesssim\intt\norm{uv}_{H^N}\lesssim\intt\norm{u}_{H^N}\norm{v}_{L^\infty}+\norm{u}_{L^\infty}\norm{v}_{H^N}\\ \lesssim
			\intt \norm{u}_{H^N}\norm{v}_{W^{1/2,(\frac{1}{6}-\delta)^{-1}}}+\norm{u}_{W^{1/2,(\frac{1}{6}-\delta)^{-1}}}\norm{v}_{H^N}
			\\\lesssim \frac{E^2}{M^{2.5}}\intt\Big(\jpns{\frac{s}{M}}^{-1-3\delta}+\jpns{s}^{-1-3\delta}\Big)\lesssim\frac{E^2}{M^{1.5}}.
		\end{gathered}
	\end{equation}
	\subsubsection{Heavy field ($v$)}
	We need to prove $\frac{c_E}{M^1.5}$ bound.
	
	Using Minkowski inequality, product estimate and bootstrap assumption we obtain
	\begin{equation}
		\begin{gathered}
			\norm{H}_{H^N}\lesssim\intt\norm{uu}_{H^N}\lesssim\intt\norm{u}_{H^N}\norm{u}_{W^{1/2,(\frac{1}{6}-\delta)^{-1}}}\lesssim\frac{E^2}{M^2}\intt\jpns{s}^{-1-3\delta}\lesssim\frac{E^2}{M^2}.
		\end{gathered}
	\end{equation}
	Here, to close the bootstrap, it's really important to have  only $\sob{v}{N}=\mathcal{O}(M^{-1.5})$ instead of $M^{-2}$.
	
	\subsection{Decay estimate in $\norm{e^{it\jpns{D}}\cdot}_{W^{k,(\frac{1}{6}-\delta)^{-1}}}$}\label{decay}
	\subsubsection{Light field}
	We need to prove $\frac{c_E}{M^1.5}\jpns{t}^{-1-3\delta}$ bound.
	
	Let us split the frequencies as explained in \cref{space time method}
	\begin{equation}
		\begin{gathered}
			L=\intl T_1(u,v)=\intl(T_{\chi_S}(u,v)+T_{\chi_T}(u,v))=I+II.\\
		\end{gathered}
	\end{equation}
	First focus on space resonant set, $I$, and do an integration by parts with respect to $s$ using $\frac{1}{i\phi}\partial_s e^{is\phi}=e^{is\phi}$:
	\begin{equation}
		\begin{gathered}
			I=e^{-it\jpns{D}}T_{\frac{\chi_S}{i\phi}}(u,v)|_t-e^{-i\jpns{D}}T_{\frac{\chi_S}{i\phi}}(u,v)|_1-\intl(T_{\frac{\chi_S}{i\phi}}(u,uu)+T_{\frac{\chi_S}{i\phi}}(uv,v)).
		\end{gathered}
	\end{equation}
	Note, that the frequencies that one picks up from $\phi$ and the derivatives of $l,h$ are exactly so that we end up with the above expression, that is amenable for traditional decay estimates. We begin by bounding the boundary term with product estimate \cref{multiplier lemma}, Sobolev embedding:
	
	\begin{equation}
		\begin{gathered}
			\norm{T_{\frac{\chi_S}{i\phi}}(u,v)|_t}_{W^{k,(\frac{1}{6}-\delta)^{-1}}}\lesssim\norm{u}_{W^{k,(\frac{1}{6}-\delta)^{-1}}}\norm{v}_{W^{\mathfrak{a},\infty}}+\norm{v}_{W^{k,(\frac{1}{6}-\delta)^{-1}}}\norm{u}_{W^{\mathfrak{a},\infty}}\\ \lesssim\norm{u}_{W^{k+3/4,3}}\norm{v}_{W^{\mathfrak{a}+1/2,(\frac{1}{6}-\delta)^{-1}}}+\norm{v}_{W^{k,(\frac{1}{6}-\delta)^{-1}}}\norm{u}_{W^{\mathfrak{a}+1/2,(\frac{1}{6}-\delta)^{-1}}}\\
			\lesssim \frac{E}{M\jpns{t}^{1+3\delta}}\frac{E}{M^{1.5}\jpns{t/M}^{1+3\delta}}\lesssim\jpns{t}^{-1-3\delta}\frac{E^2}{M^{2.5}}.
		\end{gathered}
	\end{equation}
	Where we've also used $k>\mathfrak{a}+1/2$.	For the other boundary term, we only need to use the $e^{it\jpns{D}}:W^{k,p}\to t^{-3(1/2-1/p)}L^{p^\prime}$ decay estimate \cref{decay estimate}:
	\begin{equation}
		\begin{gathered}
			\norm{e^{(t-1)i\jpns{D}}T_{\frac{\chi_S}{i\phi}}(u,v)|_1}_{W^{k,(\frac{1}{6}-\delta)^{-1}}}\lesssim\jpns{t-1}^{-1-3\delta}\norm{T_{\frac{\chi_S}{i\phi}}(u,v)}_{W^{k+3/2,(\frac{5}{6}+\delta)^{-1}}}\lesssim\frac{E^2}{M^{2.5}\jpns{t}^{1+3\delta}}
		\end{gathered}
	\end{equation}
	The bulk contribution can be bounded using Minkowski inequality, decay estimate \cref{decay estimate}, product estimate \cref{multiplier lemma} and M-dependent integrals \cref{integral estimate}
	\begin{equation}
		\begin{gathered}
			\norm{\intt e^{i(t-s)\jpns{D}}T_{\frac{\chi_S}{i\phi}}(uv,v)}_{W^{k,(\frac{1}{6}-\delta)^{-1}}}\lesssim\intt \norm{e^{i(t-s)\jpns{D}}T_{\frac{\chi_S}{i\phi}}(uv,v)}_{W^{k,(\frac{1}{6}-\delta)^{-1}}}\\
			\lesssim\intt \jpns{t-s}^{-1-3\delta}\norm{T_{\frac{\chi_S}{i\phi}}(uv,v)}_{W^{k+3/2,(\frac{5}{6}+\delta)^{-1}}} \\ \lesssim \intt \jpns{t-s}^{-1-3\delta}(\norm{u}_{H^{N}}\norm{v}_{W^{\mathfrak{a},(\frac{1}{6}+\delta/2)^-1}}^2+\norm{v}_{H^N}\norm{u}_{W^{\mathfrak{a},(\frac{1}{6}+\delta/2)^-1}}\norm{v}_{W^{\mathfrak{a},(\frac{1}{6}+\delta/2)^-1}})
			\\ \lesssim\intt \jpns{t-s}^{-1-3\delta}(\jpns{s/M}^{-2+3\delta}+\jpns{s}^{-1+3\delta/2}\jpns{s/M}^{-1+3\delta/2})\frac{E^3}{M^{4}}\lesssim \jpns{t}^{-1-3\delta}\frac{E^3}{M^{3-3\delta}}.
		\end{gathered}
	\end{equation}
	Where we've used that $-2+3\delta<-1-3\delta$. The other contribution is dealt with in a very similar manner:
	\begin{equation}
		\begin{gathered}
			\norm{\intt e^{i(t-s)\jpns{D}}T_{\frac{\chi_S}{i\phi}}(uu,u)}_{W^{k,(\frac{1}{6}-\delta)^{-1}}}\lesssim\intt \norm{e^{i(t-s)\jpns{D}}T_{\frac{\chi_S}{i\phi}}(uu,u)}_{W^{k,(\frac{1}{6}-\delta)^{-1}}}\\
			\lesssim\intt \jpns{t-s}^{-1-3\delta}\norm{T_{\frac{\chi_S}{i\phi}}(uu,u)}_{W^{k+3/2,(\frac{5}{6}+\delta)^{-1}}} \\ \lesssim \intt \jpns{t-s}^{-1-3\delta}\norm{u}_{H^{N}}\norm{u}_{W^{\mathfrak{a},(\frac{1}{6}+\delta/2)^-1}}^2
			\\ \lesssim\intt \jpns{t-s}^{-1-3\delta}\jpns{s}^{-2+3\delta}\frac{E^3}{M^{3}}\lesssim \jpns{t}^{-1-3\delta}\frac{E^3}{M^{3}}.
		\end{gathered}
	\end{equation}
	Now we turn our attention to the time resonant contribution. Performing an integration by parts with respect to $\nu$, using $\frac{\nabla_\nu\phi}{\abs{\nabla_\nu\phi}^2}\cdot\nabla_\nu e^{is\phi}=e^{is\phi}$
	\begin{equation}
		\begin{gathered}
			\widehat{II}=\intt\int\d\nu \frac{e^{-is\phi}}{s} \nabla_\nu\Big(m\hat{h}(\nu)\hat{l}(\rho-\nu)\Big)=
			\intt\int\d\nu \frac{e^{-is\phi}}{s} \Big(m\hat{h}\nabla_\nu\hat{l}+m\hat{l}\nabla_\nu\hat{h}+\hat{h}\hat{l}\nabla_\nu m\Big)
		\end{gathered}
	\end{equation}
	for $m=\frac{\chi_T\nabla_\nu\phi}{\abs{\nabla_\nu\phi}^2}$. The part, where $\nabla$ acts on the multiplier is easier as we have better decay for the profiles in that case, therefore we will only show bounds for the other two terms. This will follow from Minkowski inequality, decay estimate, product estimate, interpolation inequality
	\begin{equation}
		\begin{gathered}
			\norm{\intt\frac{e^{i(t-s)\jpns{D}}}{s}T_{m}(v,e^{is\jpns{D}}xl)}_{W^{k,(\frac{1}{6}-\delta)^{-1}}}\lesssim\intt\frac{ 1}{\jpns{t-s}^{1+3\delta}s}\norm{T_m(v,e^{is\jpns{D}}xl)}_{W^{k+3/2,(\frac{5}{6}+\delta)^{-1}}}\\
			\lesssim\intt\frac{ 1}{\jpns{t-s}^{1+3\delta}s}(\norm{v}_{W^{k+3/2,(\frac{1}{3}+\delta)^{-1}}}\norm{e^{is\jpns{D}}xl}_{H^{\mathfrak{a}}}+\norm{v}_{W^{\mathfrak{a},(\frac{1}{3}+\delta)^{-1}}}\norm{e^{is\jpns{D}}xl}_{H^{k+3/2}})\\
			\lesssim\intt\frac{ 1}{\jpns{t-s}^{1+3\delta}s}\norm{v}^{17/20}_{W^{k,(\frac{1}{3}+\delta)^{-1}}}\norm{v}^{3/20}_{W^{k+10,(\frac{1}{3}+\delta)^{-1}}}\norm{e^{is\jpns{D}}xl}_{H^{k+3/2}}	\\
			\lesssim\intt\frac{1}{\jpns{t-s}^{1+3\delta}s}\frac{1}{\jpns{s/M}^{(1/2-3\delta)17/20}}\frac{E^2}{M^{2.5}}\lesssim\jpns{t}^{-1-3\delta}\frac{E^2}{M^{2.5-3\delta}}.
		\end{gathered}
	\end{equation}
	Where, we needed that $1+3\delta<1+17/20(1/2-3\delta)$ and $N>k+3/2+10$. The other term is dealt with similarly:
	\begin{equation}
		\begin{gathered}
			\norm{\intt\frac{e^{i(t-s)\jpns{D}}}{s}T_{m}(u,e^{is\jpns{D}_M}xh)}_{W^{k,(\frac{1}{6}-\delta)^{-1}}}\lesssim\intt\frac{ 1}{\jpns{t-s}^{1+3\delta}s}\norm{T_m(u,e^{is\jpns{D}_M}xh)}_{W^{k+3/2,(\frac{5}{6}+\delta)^{-1}}}\\
			\lesssim\intt\frac{ 1}{\jpns{t-s}^{1+3\delta}s}\norm{u}_{W^{k+3/2,(\frac{1}{3}+\delta)^{-1}}}\norm{e^{is\jpns{D}_M}xh}_{H^{k+3/2}}\\
			\lesssim\intt\frac{1}{\jpns{t-s}^{1+3\delta}s}\frac{1}{\jpns{s}^{(1/2-3\delta)17/20}}\frac{E^2}{M^{2.5}}\lesssim\jpns{t}^{-1-3\delta}\frac{E^2}{M^{2.5}}
		\end{gathered}
	\end{equation}

	\subsubsection{Heavy field}
	We need to prove $\frac{c_E}{M^1.5}\jpns{t/M}^{-1-3\delta}$ bound.
	
	We follow the same steps as for the light field.
	\begin{equation}
		\begin{gathered}
			H=\int_{1}^{t}\d s  e^{-is\jpns{D}_M}T_1(u,u)=\int_{1}^{t}\d s  e^{-is\jpns{D}_M}(T_{\chi_S}(u,u)+T_{\chi_T}(u,u))=:I+II
		\end{gathered}
	\end{equation}
	Let's start with the space resonance
	\begin{equation}
		\begin{gathered}
			\inth T_{\chi_S}(u,u)=e^{-it\jpns{D}_M}T_{\frac{\chi_S}{i\phi}}(u,u)|_t-e^{-i\jpns{D}_M}T_{\frac{\chi_S}{i\phi}}(u,u)|_1-2\inth T_{\frac{\chi_S}{i\phi}}(u,uv)
		\end{gathered}
	\end{equation}
	Boundary:
	
	\begin{equation}
		\begin{gathered}
			\norm{T_{\frac{\chi_S}{i\phi}}(u,u)|_t}_{W^{k,(\frac{1}{6}-\delta)^{-1}}}\lesssim\norm{u}_{W^{k,\infty}}\norm{u}_{W^{\mathfrak{a},(\frac{1}{6}-\delta)^{-1}}}\\\lesssim\norm{u}^{17/20}_{W^{k,(\frac{1}{6}-\delta)^{-1}}}\norm{u}^{3/20}_{H^{N}}\norm{u}_{W^{\mathfrak{a},(\frac{1}{6}-\delta)^{-1}}}
			\lesssim\jpns{t}^{-1-3\delta}\frac{E^2}{M^{2}}
		\end{gathered}
	\end{equation}
	$N>k+11$
	Bulk:
	\begin{equation}
		\begin{gathered}
			\norm{\intt e^{i(t-s)\jpns{D}_M}T_{\frac{\chi_S}{i\phi}}(u,uv)}_{W^{k,(\frac{1}{6}-\delta)^{-1}}}\lesssim\intt \jpns{\frac{t-s}{M}}^{-1-3\delta}\norm{T_{\frac{\chi}{i\phi}}(u,uv)}_{W^{k,(\frac{5}{6}+\delta)^{-1}}}\\\lesssim
			\intt\jpns{\frac{t-s}{M}}^{-1-3\delta} (\norm{u}_{H^N}\norm{u}_{W^{\mathfrak{a},6}}\norm{v}_{W^{\mathfrak{a},(\frac{1}{6}+\delta)^{-1}}}+\norm{v}_{H^N}\norm{u}^2_{L^(\frac{1}{6}+\delta/2)^{-1}})
			\\\lesssim\intt\jpns{\frac{t-s}{M}}^{-1+3\delta}\frac{E^3}{M^{3.5}}(\jpns{s}^{-1+3\delta}\jpns{s/M}^{-1+3\delta}+\jpns{s}^{-2+3\delta})\lesssim\jpns{t/M}^{-1-3\delta}  \frac{E^3}{M^{2.5}}
		\end{gathered}
	\end{equation}
	Now we turn our attention to $II$.
	\begin{equation}
		\begin{gathered}
			\widehat{II}=-\intt \int\d\nu \frac{e^{is\phi}}{s}\nabla_\nu\Big(m\hat{l}(\nu)\hat{l}(\rho-\nu)\Big)\\
			=-\intt \int\d\nu \frac{e^{is\phi}}{s}(\hat{l}(\nu)\hat{l}(\rho-\nu)\nabla_\nu m+m\hat{l}(\rho-\nu)\nabla_\nu\hat{l}(\nu)-m\hat{l}(\nu)\nabla_\nu\hat{l}(\rho-\nu))
		\end{gathered}
	\end{equation}
	where $m=\frac{\chi_T\nabla_\nu\phi}{\abs{\nabla_\nu\phi}^2}$. As before, we will only work with the case where $\nabla_\nu$ act on a phase rather than the multiplier.
	
	\begin{equation}
		\begin{gathered}
			\norm{\intt  \frac{e^{i(t-s)\jpns{D}_M}}{s}T_m(u,e^{is\jpns{D}}xl)}_{W^{k,(\frac{1}{6}-\delta)^{-1}}}\lesssim\int_{0}^{t}\d s\frac{1}{s\jpns{(t-s)/M}^{1+3\delta}}\norm{T_m(u,e^{is\jpns{D}}xl)}_{W^{k+3/2,(\frac{5}{6}+\delta)^{-1}}}
			\\\lesssim\int_{0}^{t}\d s\frac{1}{s\jpns{(t-s)/M}^{+1+3\delta}} \norm{u}_{W^{k+3/2,(1/3+\delta)^{-1}}}\norm{e^{is\jpns{D}}xl}_{H^{\mathfrak{a}}}
			\\ \lesssim\intt \frac{1}{s\jpns{(t-s)/M}^{+1+3\delta}}\frac{1}{\jpns{s}^{(1/2-3\delta)17/20}}\frac{E^2}{M^2}\lesssim\frac{E^2}{M^2}\jpns{t/M}^{-1-3\delta}
		\end{gathered}
	\end{equation}

	\subsection{Energy estimate $\norm{x\cdot}_{H^{k+3/2}}$}\label{profile energy}
	We need to prove $\frac{c_E}{M^1.5}$ bound.
	
	This is the only section, where we will use crucially that the derivatives are lost on the low frequency term when using the multipliers coming from the integration by parts argument.  
	\subsubsection{Light field}
	We know that an $x$ weight in physical space corresponds to $\nabla_\rho$ in Fourier, so we need to bound
	\begin{equation}\label{split a1 a2 a3}
		\begin{gathered}
			\nabla_\rho L=\intt e^{-is\jpns{D}}\Big(sT_{-i(\nabla_\rho\phi)}(u,v)+T(e^{is\jpns{D}}xl,v)+T(u,e^{is\jpns{D}_M}xh)\Big)=a_1+a_2+a_3
		\end{gathered}
	\end{equation}
	Since $\nabla_\rho\phi$ has all the desired decay properties and we don't want to use any gain from such a factor, we will drop it in the following discussion.
	Consider the splitting to resonances $a_i^I,a_i^{II}$, as done above. Focusing on $a_1$, we can bound each term as follows:
	\begin{itemize}
		\item Boundary term coming from integration by parts with respect to $s$
		\begin{equation}
			\begin{gathered}
				\norm{tT_{\frac{\chi_S}{i\phi}}(u,v)}_{H^{k+3/2}}\lesssim t\norm{u}_{W^{\mathfrak{a},6}}\norm{v}_{W^{k+3,2}}+t\norm{u}_{W^{N,2}}\norm{v}_{W^{\mathfrak{a},6}}
				\\\lesssim t\frac{E^2}{M^{2.5}}(\jpns{t}^{-1}+\jpns{t/M}^{-1})\lesssim\frac{E^2}{M^{1.5}}
			\end{gathered}
		\end{equation}
	\item Bulk term from of $a_I$ after integration by parts when $\partial_s$ acts on the light field	
	\begin{equation}
		\begin{gathered}
			\norm{\int_{1}^{t}\d s sT_{\frac{\chi_S}{i\phi}}(uv,v)}_{W^{k+3/2,2}}\lesssim\intt  s\norm{T_{\frac{\chi_S}{i\phi}}(uv,v)}_{W^{k+3/2,2}}\\
			\lesssim\intt_1^ts(\norm{u}_{W^{k+3/2,2}}\norm{v}_{W^{\mathfrak{a}+1/2,(1/6-\delta)^{-1}}}^2+\norm{v}_{W^{k+3/2,2}}\norm{v}_{W^{\mathfrak{a}+1/2,(1/6-\delta)^{-1}}}\norm{u}_{W^{\mathfrak{a}+1/2,(1/6-\delta)^{-1}}})
			\\\lesssim\intt \frac{s}{\jpns{s/M}^{2+6\delta}}\frac{E^3}{M^4}\lesssim\frac{E^3}{M^{2}}
		\end{gathered}
	\end{equation}
		and when $\partial_s$ acts on the heavy field.
	\begin{equation}
			\begin{gathered}
				\norm{\int_{1}^{t}\d sT_{\frac{\chi_S}{i\phi}}(u,uu)}_{W^{k+3/2,2}}s\lesssim\intt s\norm{T_{\frac{\chi_S}{i\phi}}(u,uu)}_{W^{k+3/2,2}}\\ \lesssim
				\int_1^ts\norm{u}_{W^{k+3/2,2}}\norm{u}_{W^{\mathfrak{a}+1/2,(1/6-\delta)^{-1}}}^2\lesssim\intt \frac{1}{\jpns{s}^{1+6\delta}}\frac{E^3}{M^3}\lesssim\frac{E^3}{M^{3}}
			\end{gathered}
	\end{equation}
		\item Time resonant contribution $a_1^{II}$ after integration by parts when $\nabla_\nu$ acts on light field
			\begin{equation}
			\begin{gathered}
				\norm{\intt  T_m(e^{is\jpns{D}}xl,v)}_{W^{k+3/2,2}}\lesssim\int_1^t\norm{T_m(e^{is\jpns{D}}xl,v)}_{W^{k+3/2,2}}\\
				\lesssim\intt \norm{e^{is\jpns{D}}xl}_{H^{k+3/2}}\norm{v}_{W^{\mathfrak{a}+1/2,(1/6-\delta)^{-1}}}+\norm{e^{is\jpns{D}}xl}_{W^{\mathfrak{a},(2/6+\delta)^{-1}}}\norm{v}_{W^{k+3/2,(1/6-\delta)^{-1}}}\\
				\lesssim\intt \norm{e^{is\jpns{D}}xl}_{H^{k+3/2}}\norm{v}_{W^{\mathfrak{a}+1/2,(1/6-\delta)^{-1}}}+\norm{e^{is\jpns{D}}xl}_{W^{\mathfrak{a}+1/2,2}}\norm{v}_{W^{k,(1/6-\delta)^{-1}}}^{17/20}\norm{v}_{W^{k+11,2}}^{3/20}
				\\\lesssim\intt(\frac{1}{\jpns{s/M}^{1+3\delta}}+\frac{1}{\jpns{s/M}^{(1+3\delta)17/20}})\frac{E^2}{M^{2.5}}\lesssim\frac{E^2}{M^{1.5}}
			\end{gathered}
		\end{equation}
		where I've used $N>k+11,\delta>1/17$. When $\nabla_\nu$ acts on heavy field,
		\begin{equation}
			\begin{gathered}
				\norm{\intt  T_m(e^{is\jpns{D}}xh,u)}_{W^{k+3/2,2}}\lesssim\int_1^t\norm{T_m(e^{is\jpns{D}}xh,u)}_{W^{k+3/2,2}}\\
				\lesssim\intt \norm{e^{is\jpns{D}}xh}_{H^{k+3/2}}\norm{u}_{W^{\mathfrak{a}+1/2,(1/6-\delta)^{-1}}}+\norm{e^{is\jpns{D}}xh}_{W^{\mathfrak{a}+1/2,2}}\norm{u}_{W^{k,(1/6-\delta)^{-1}}}^{17/20}\norm{u}_{W^{k+11,2}}^{3/20}
				\\\lesssim\intt(\frac{1}{\jpns{s}^{1+3\delta}}+\frac{1}{\jpns{s}^{(1+3\delta)17/20}})\frac{E^2}{M^{2.5}}\lesssim\frac{E^2}{M^{2.5}}
			\end{gathered}
		\end{equation}
	\end{itemize}
	The remaining terms $a_2,a_3$ are bounded exactly like the last two terms above.

	\subsubsection{Heavy field}
	We need to prove $\frac{c_E}{M^1.5}\jpns{t}^{-1-3\delta}$ bound.
	
	As usual by now, the steps are identical to the previous case, only the factors of $M$ appear at different places. Split $	\nabla_\rho L==a_1+a_2+a_3$ as in \cref{split a1 a2 a3}. Focusing on $a_1$, we have
	\begin{itemize}
		\item Boundary term near space resonance
			\begin{equation}
				\begin{gathered}
					\norm{tT_{\frac{\chi_S}{i\phi}}(u,u)}_{H^{k+3/2}}\lesssim t\norm{u}_{W^{\mathfrak{a}+1/2,6}}\norm{u}_{W^{N,2}}\lesssim t\frac{E^2}{M^{2}}\jpns{t}^{-1}\lesssim\frac{E^2}{M^{2}}
				\end{gathered}
			\end{equation}
		\item Bulk term near space resonance
			\begin{equation}
				\begin{gathered}
					\norm{\intt_{1}^{t} sT_{\frac{\chi_S}{i\phi}}(uv,u)}_{W^{k+3/2,2}}s\lesssim\intt  s\norm{T_{\frac{\chi_S}{i\phi}}(uv,u)}_{W^{k+3/2,2}}\\
					\lesssim\intt_1^ts(\norm{v}_{W^{k+3/2,2}}\norm{u}_{W^{\mathfrak{a}+1/2,(1/6-\delta)^{-1}}}^2+\norm{u}_{W^{k+3/2,2}}\norm{v}_{W^{\mathfrak{a}+1/2,(1/6-\delta)^{-1}}}\norm{u}_{W^{\mathfrak{a}+1/2,(1/6-\delta)^{-1}}})
					\\\lesssim\intt \frac{1}{\jpns{s/M}^{1+3\delta}\jpns{s}^{3\delta}}\frac{E^3}{M^{3.5}}\lesssim\frac{E^3}{M^{2.5}}
				\end{gathered}
			\end{equation}
		\item Bulk term near time resonance
			\begin{equation}
				\begin{gathered}
					\norm{\intt  T_m(e^{is\jpns{D}}xl,u)}_{W^{k+3/2,2}}\lesssim\int_1^t\norm{T_m(e^{is\jpns{D}}xl,u)}_{W^{k+3/2,2}}\\
					\lesssim\intt \norm{e^{is\jpns{D}}xl}_{H^{k+3/2}}\norm{u}_{W^{\mathfrak{a}+1/2,(1/6-\delta)^{-1}}}+\norm{e^{is\jpns{D}}xl}_{W^{\mathfrak{a},(2/6+\delta)}}\norm{u}_{W^{k+3/2,(1/2-\delta)^{-1}}}\\
					\lesssim\intt \norm{e^{is\jpns{D}}xl}_{H^{k+3/2}}\norm{u}_{W^{\mathfrak{a}+1/2,(1/6-\delta)^{-1}}}+\norm{e^{is\jpns{D}}xl}_{H^{\mathfrak{a}+1/2}}\norm{u}_{W^{k,(1/6-\delta)^{-1}}}^{17/20}\norm{u}_{W^{k+11,2}}^{3/20}
					\\\lesssim\intt(\frac{1}{\jpns{s}^{(1+3\delta)17/20}}\frac{E^2}{M^{2}}\lesssim\frac{E^2}{M^{2}}.
				\end{gathered}
			\end{equation}
	\end{itemize}
	The terms $a_2,a_3$ are identical to the time resonance terms.
	
	\section{M expansion and error term}\label{scattering section}
	We will use the initial data norm associated to the 2nd order equation, 
	\begin{equation}
		\begin{gathered}
			\norm{V_0,V_1}_{M,N,k}=\sob{V_0}{N}+\sob{V_1}{N-1}+M\sob{V_0}{N-1}+\sob{xV_0}{k+3/2}+\norm{xV_1}_{H^{k+1/2}}+M\norm{xV_0}_{H^{k+1/2}}.
		\end{gathered}
	\end{equation}
	Note that this norm is the equivalent for the 2nd order system of \cref{weighted initial data norm}.
	
	In this section, we will consider what final state asymptomatic can be derived for the light field if we restrict the initial data of the heavy field further. Let us recall \cref{eom} 
	\begin{equation*}
		\begin{gathered}
			(\Box-1) U= U{V}- U^3/2\\
			(\Box-M^2){V}= U^2{V}- U^4/2+\partial  U\partial  U,
		\end{gathered}
	\end{equation*}
	where $M{U},M^2{V}=\mathcal{O}(1)$ (in a suitable function space). Hence, the dominant forcing term for the ${V}$ equation both in terms of $M$ and decay wise is $\partial{U}\partial{U}$. Indeed, from now on, we will omit the $U^2V$ term as it makes the notation heavier, but adds no more complications, see \cref{dropping v terms}.  However, for ${U}$ equation, both quantities are of the same order in $M$.
	
	Notice that \cref{global existence} may be improved once global existence is known. The forcing for the heavy field has scaling $1/M^2$, so given that the initial data for $V$ has scaling $M^{-\alpha}$ for $\alpha\in[1.5,2]$,  we know that the solution will also scale so.
	\begin{lemma}[Extension of \cref{global existence}]
		Let $N,k$ be as in \cref{global existence}. Given initial data $U_0,U_1,V_0,V_1$ with $\norm{U_0,U_1}_{1,N+1,k+1},M^{\alpha}\norm{V_0,V_1}_{M,N+1,k+1}<E/M$ for $\alpha\in[0.5,1]$ and $M$ sufficiently large, \cref{eom} has global solution with
		$M^{\alpha}(\norm{V,\partial_tV}_{M,N,k})\lesssim_E1$.
	\end{lemma}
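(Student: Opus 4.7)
The strategy is to first apply \cref{global existence} itself at regularity $(N+1,k+1)$ to obtain a global solution $(U,V)$ satisfying all the bootstrap bounds of \cref{bootstrap assumption} at that higher regularity; in particular $\norm{U}_{H^{N+1}} \lesssim E/M$ and $\sob{V}{N+1} \lesssim E/M^{1.5}$ uniformly in time, together with the pointwise and profile decay estimates. This is available because the hypothesis $M^\alpha \norm{V_0,V_1}_{M,N+1,k+1} < E/M$ implies in particular $\norm{V_0,V_1}_{M,N+1,k+1} < E/M$ (using $M^\alpha \geq 1$). The point is that on top of this base solution we now upgrade the estimates for $V$ alone by a factor $M^\alpha$, using that the forcing driving $V$ is a priori very small and does not depend on any improved scaling of $V$ itself.

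\textbf{Main step: Duhamel bound on $V$.} Writing the Klein--Gordon Duhamel formula
\begin{equation*}
V(t) = \cos(t\jpns{D}_M)V_0 + \frac{\sin(t\jpns{D}_M)}{\jpns{D}_M}V_1 - \int_0^t \frac{\sin((t-s)\jpns{D}_M)}{\jpns{D}_M} F(s)\,\d s,\qquad F = \partial U\partial U - U^4/2 + U^2 V,
\end{equation*}
unitarity of the propagator gives $\sob{V_{\mathrm{lin}}}{N} \leq \sob{V_0}{N} + M^{-1}\sob{V_1}{N-1} \lesssim E/M^{1+\alpha}$. For the dominant nonlinear term $\partial U\partial U$, the gain $\jpns{D}_M^{-1} \leq M^{-1}$ together with a product estimate and the pointwise decay of the light field yields
\begin{equation*}
\left\| \int_0^t \frac{\sin((t-s)\jpns{D}_M)}{\jpns{D}_M}\, \partial U\,\partial U\,\d s\right\|_{H^{N}} \lesssim \frac{1}{M}\int_0^t \sob{U}{N+1}\norm{\partial U}_{L^\infty}\,\d s \lesssim \frac{E^2}{M^{3}}\int_0^t \jpns{s}^{-3/2}\,\d s \lesssim \frac{E^2}{M^3},
\end{equation*}
and the $U^4$ and $U^2 V$ contributions are strictly smaller (they carry extra factors of $E/M$ and the latter a factor of $\sob{V}{N} \lesssim E/M^{1.5}$). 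Combining, $\sob{V(t)}{N} \lesssim E/M^{1+\alpha} + \jpns{E}^2/M^3$, and since $\alpha \leq 1 < 2$ the second term is dominated by $\jpns{E}^2/M^{1+\alpha}$, giving $M^\alpha \sob{V(t)}{N} \lesssim_E 1$. The other unweighted pieces $M\sob{V(t)}{N-1}$ and $\sob{\partial_t V(t)}{N-1}$ follow from the same Duhamel formula with one fewer derivative and from differentiating the Duhamel identity in $t$.

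\textbf{Weighted profile bounds.} For the $\sob{xV}{k+3/2}$-type terms entering $\norm{V,\partial_t V}_{M,N,k}$ one passes to the profile $h = e^{-it\jpns{D}_M}v_+$ and analyses $\nabla_\rho \hat h$ exactly as in \cref{profile energy}: after the decomposition into space/time resonant parts (\cref{chi lemma}) and integration by parts in $s$ or $\nu$, each term is bounded using the symbolic estimates of \cref{symbolic estimates} and the product bounds of \cref{multiplier lemma}, now applied with $U,V$ inputs already controlled by global existence at regularity $N+1,k+1$. The calculations are identical in structure to those of \cref{profile energy} for the heavy field; the only new input is that the initial data $\sob{xV_0}{k+3/2}$ is now bounded by $E/M^{1+\alpha}$ rather than $E/M^{1.5}$, while the bulk/boundary contributions remain of bootstrap size $\sim E^2/M^3$, which is always dominated by $E/M^{1+\alpha}$ for $\alpha\leq 2$.

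\textbf{Main obstacle.} The conceptual content is already settled: no new resonance analysis or multiplier estimate is required beyond those of \cref{frequency analysis}. The main nuisance is bookkeeping---verifying term by term that every nonlinear contribution to $V$'s evolution is smaller than the target scale $1/M^{1+\alpha}$, which boils down to checking that the worst forcing, $\partial U\partial U$, produces a bulk term of order $E^2/M^3 \leq \jpns{E}^2/M^{1+\alpha}$ for $\alpha\in[0.5,1]$. Once this bootstrap inequality is verified at $t=0$ from the improved initial data, a standard continuity argument (since global existence is already known, no new local existence is needed) closes the improved bound for all time.
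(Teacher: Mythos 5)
Your overall strategy --- first run \cref{global existence} at the higher regularity $(N+1,k+1)$ using the weaker hypothesis (valid since $M^{\alpha}\geq 1$), then directly upgrade the $V$ estimates via the Klein--Gordon Duhamel formula, since the forcing on $V$ involves only already-controlled quantities and the base $V$ itself --- is exactly the argument the paper has in mind; the remark preceding the lemma gives the same one-line heuristic, and no written proof appears in the paper. Your observation that no new bootstrap or local-existence step is required for the unweighted pieces is also correct, and your last paragraph (``a standard continuity argument closes the bound'') is a bit of a red herring: once global existence is known, the unweighted Sobolev bounds follow from a single a priori Duhamel estimate with no continuity argument at all.

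There are, however, two bookkeeping errors you should be aware of, because they hide where the constraint $\alpha\leq 1$ actually comes from. First, the decay rate provided by the $Z$-norm is $\jpns{s}^{-(1+3\delta)}$ with $\delta=1/14$, not $\jpns{s}^{-3/2}$; still integrable, so the conclusion is unaffected. Second, and more importantly, the profile bulk/boundary contributions in the heavy-field part of \cref{profile energy} are of size $E^2/M^2$ (and $E^3/M^{2.5}$), \emph{not} $E^2/M^3$; likewise the $\cos$ factor in the Duhamel formula for $\partial_t V$ carries no $\jpns{D}_M^{-1}$ gain, so $\sob{\partial_t V}{N-1}$ also only gets a nonlinear contribution of size $E^2/M^2$. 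These $E^2/M^2$ terms dominate $E/M^{1+\alpha}$ precisely when $\alpha> 1$, which is the real reason the lemma restricts to $\alpha\leq 1$ (not $\alpha\leq 2$ as you assert). Since the lemma's statement already imposes $\alpha\in[0.5,1]$, your conclusion is still correct, but your claim about the $\alpha$-range is not. One further small inaccuracy: $\norm{\jpns{D}_M^{-1}V_1}_{H^N}\lesssim M^{-1}\sob{V_1}{N-1}$ is false at high frequency; the correct elementary bounds are $\norm{\jpns{D}_M^{-1}g}_{H^N}\leq\sob{g}{N-1}$ (gain one derivative, no $M$) or $\leq M^{-1}\sob{g}{N}$ (gain $M^{-1}$, no derivative), and the argument should use whichever is appropriate term by term.
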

	Importantly, note that this means $\sob{V}{N-1}\lesssim1/M^3$. Therefore in one less regular function space, the $UV$ term is one less order in $M$ than the $U^3$, so we can neglect the contribution of $UV$ for high values of $M$.
	
	We now make the change of variables $\bar{V}=V-\sum_{i=1}^n\frac{F_i[U]}{M^{2i}}$ to get
	\begin{equation}\label{eft eom}
		\begin{gathered}
			(\Box-1) U= U\bar{V}-U\sum_{i=1}^n\frac{F_i[U]}{M^{2i}}- U^3/2\\
			(\Box-M^2)\bar{V}=(\Box-M^2)\sum_{i=1}^n\frac{F_i[U]}{M^{2i}}+\partial  U\partial  U-U^4/2=\frac{\Box F_n}{M^{2n}}\\
			{U}(0)=U_0, \partial_t {U}(0)=U_1, \bar{V}(0)=\bar{V}_0,\partial_t\bar{V}(0)=\bar{V}_1\\
		\end{gathered}
	\end{equation}
	where $F_1[U]=\partial U\cdot\partial U-U^4/2$ and $F_{i+1}=\Box F_i$, which are (at least) quadratic. The upshot of course is that we have very good estimates for $U$ and thus if we make a stronger restriction on the initial data for $\bar{V}$ in terms of powers of $M$, that will be propagated. The problem with this approach is that in \cref{eft eom}, we find $\partial_t^nU$ forcing terms for $n>1$. This is an issue, since \cref{eom} cannot give good bound on these terms. The resolution is to introduce EFT conditions on initial data progressively (instead only at a high order) to get more regular time derivatives for $U$. 
	\begin{definition}\label{eft condition}
		We say that an initial data to \cref{eom} satisfies EFT conditions to order $n$ if
				\begin{equation}\label{eft initial data}
			\begin{gathered}
				\norm{U_s,U_{s+1}}_{1,N,k}+\norm{V_s,V_{s+1}}_{M,N,k}<E/M
			\end{gathered}
		\end{equation}
	holds $\forall s\leq n$.
	\end{definition}

	Denoting the right hand side of \cref{eom} with $F_U,F_V$ respectively, we get an analogue of Proposition 7, \cite{reall_effective_2022}:
	\begin{lemma}\label{global existence for higher derivatives}
		Fix $n\geq0$ integer, $N,k,\delta$ as in \cref{global existence} and initial data $U_0,U_1,V_0,V_1$ satisfying EFT conditions to order $2n$. Then
		\begin{equation}\label{eft eom reduced}
			\begin{gathered}
				(\Box-1)\partial_t^sU=\partial_t^s F_U\\
				(\Box-M^2)\partial_t^sV=\partial_t^s F_V\\
				\partial_t^s {U}(0)=U_s, \partial_t^s{V}(0)={V}_s
			\end{gathered}
		\end{equation}
		$\forall s\leq n$ has global solution with bounded $X$ norms after the suitable transformation as detailed in \cref{space time method}, in particular
		\begin{equation}\label{key}
			\begin{gathered}
				\norm{\partial_t^sU}_{H^{N+1}},\jpns{t}^{1+3\delta}\norm{\partial_t^sU}_{W^{k,(1/6-\delta)^{-1}}}\lesssim_E1/M \qquad \forall s\leq n\\
				M\norm{\partial_t^sV}_{H^{N+1}},M^2\norm{\partial_t^sV}_{H^N},M^2\jpns{t}^{1+3\delta}\norm{\partial_t^sV}_{W^{k,(1/6-\delta)^{-1}}}\lesssim_E1/M \qquad \forall s\leq n
			\end{gathered}
		\end{equation}
		and some other top estimates for $M$ sufficiently large.
	\end{lemma}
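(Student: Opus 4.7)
My plan is to induct on $s$ from $0$ to $n$. The base case $s=0$ is the content of \cref{global existence} with the Sobolev index raised by one: the input regularity needed to conclude $\norm{U}_{H^{N+1}}$ rather than $\norm{U}_{H^N}$ is supplied by the EFT hypotheses themselves, since the PDE lets one trade $U_{s+2}\in H^{N-1}$ for $\Delta U_s\in H^{N-1}$, i.e.\ $U_s\in H^{N+1}$. For the inductive step, assume \cref{key} holds for all $j<s$, and set $u^{(s)}:=\partial_t^s U$, $v^{(s)}:=\partial_t^s V$. Because $[\partial_t,\Box]=0$, the pair $(u^{(s)},v^{(s)})$ satisfies the Klein-Gordon system \cref{eft eom reduced} with the same linear operators as the original system, so the entire frequency analysis of \cref{frequency analysis}---the phases $\phi_u,\phi_v$, the partition $\chi_S+\chi_T=1$, the symbolic bounds \cref{symbolic estimates}, and the multiplier estimate \cref{multiplier lemma}---applies unchanged.

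Expanding $\partial_t^s F_U$ and $\partial_t^s F_V$ by the Leibniz rule splits each forcing into two kinds of contributions. The \emph{diagonal} terms are those in which all $s$ derivatives fall on a single factor (for instance $u^{(s)}V$, $Uv^{(s)}$, $U^2 u^{(s)}$, $\partial U\cdot\partial u^{(s)}$); these reproduce exactly the bilinear and cubic structure of the original \cref{eom} with $(U,V)$ replaced by $(u^{(s)},v^{(s)})$. The \emph{mixed} terms are products of the form $u^{(j_1)}v^{(j_2)}$, $u^{(j_1)}u^{(j_2)}u^{(j_3)}$, or $\partial u^{(j_1)}\partial u^{(j_2)}$ with every index $j_i<s$; each factor is then already controlled in the full $X$-norm by the inductive hypothesis.

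With this decomposition in hand, I would introduce profiles $l^{(s)}_\pm, h^{(s)}_\pm$ exactly as in \cref{resonance transformation} and set up a bootstrap in the analog of the $X$-norm applied to $(u^{(s)},v^{(s)})$. The arguments of \cref{energy}, \cref{decay}, and \cref{profile energy} then apply essentially verbatim to the diagonal pieces, producing identical $M$-powers and $\jpns{t},\jpns{t/M}$ decay as for $s=0$. The mixed pieces actually behave better: each additional $v^{(j)}$ contributes an extra $M^{-1}$ compared to $u^{(j)}$, and their time integrals close via the same bounds from \cref{integral estimate}. The combinatorial Leibniz constants are harmless because $s\leq n$ is fixed.

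The main obstacle I foresee is derivative bookkeeping. Each use of the PDE to trade a time derivative for a Laplacian costs two spatial orders, so reading off $\norm{U_s}_{H^{N+1}}$ from the hypothesis requires climbing the EFT hierarchy by roughly $s$ levels, and similarly for the weighted norms $\norm{xl^{(s)}}_{L^\infty_t H^{k+3/2}_x}$ appearing in the space-resonance integration by parts. This is the reason the statement assumes EFT conditions up to order $2n$ rather than $n$: the factor of two provides the slack needed to control all Cauchy data $(U_s,U_{s+1},V_s,V_{s+1})$ for $s\leq n$ in every norm fed into the bootstrap. A secondary subtlety is verifying that diagonal terms such as $u^{(s)}V$ inherit the $M^{-1}$ smallness carried by $V$, so that \cref{key} closes uniformly in $s$; this uses crucially that the EFT norm $\norm{V_s,V_{s+1}}_{M,N,k}<E/M$ already encodes the $M$-weight.
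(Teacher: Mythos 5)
Your proposal is correct and follows the same inductive strategy as the paper's (very terse) proof: commute $\partial_t^s$ through the system, treat the lower-order time derivatives produced by the Leibniz rule as external forcing already controlled in the $X$-norm by the inductive hypothesis, and run the same bootstrap as in \cref{global existence} for the top-order pair $(\partial_t^s U,\partial_t^s V)$. You also correctly observe that the $\partial_t^{s+1}U$ terms coming from $\partial U\cdot\partial U$ are absorbed into the half-wave bootstrap variables for $u^{(s)}$, and that the order-$2n$ EFT hypothesis provides the regularity slack for the higher Cauchy data, matching the paper's remark that ``the forcing term contains time derivatives up to order $s+1$.''
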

	
	\begin{proof}
		The $s=0$ case is \cref{global existence}. For higher values of $s$, we proceed by induction. Note that the forcing term contains time derivatives up to order $s+1$, so using the bounds in $\norm{\cdot}_X$ we can treat the time derivatives of order strictly smaller than $s$ as external force with good decay properties, and make the same bootstrap assumption and proceed as in the $s=0$ case.
	\end{proof}
	
	The assumption on initial data in \cref{global existence for higher derivatives} seems really strange, because $U_s,V_s$ for $s>1$ follow from $U_0,U_1,V_0,V_1$ and the equation of motion. 
	To see that the assumption set is not trivial, we will follow the discussion of Lemma 8 in \cite{reall_effective_2022}. 
	
	\subsection{Right initial data}
	Fix $E=1$, ie. make it implicit in inequalities. Let's start with
	\begin{equation*}
		\begin{gathered}
			M\sob{U_0}{N+1},M\sob{U_1}{N},M^2\sob{V_0}{N+1},M^2\sob{V_1}{N}\lesssim1
		\end{gathered}
	\end{equation*}
	focusing only on the non-weighted estimate, the other following analogous argument. Using the equation of motion for $U$, we see that in fact $\norm{U_2}_{H^{N-1}}\lesssim1/M$. To place additional assumption on initial data, let's define $V^0:=V$, $V^1=V^0-F_1[U]/M^2$ as in \cref{eft eom}. We define it's initial data as before using $V^1_n=\partial_t^nV^1(0)$. Since $F_1[U]$ depends at most on first derivative of $U$, at $t=0$ it is a polynomial function of $U_0,U_1$ and their spatial derivatives of $U_0$, denoted by $P_1[U_0,U_1]$. Following \cite{reall_effective_2022}, let's assume\footnote{we find it convenient to introduce a loss of derivative here, as $V^1$ includes derivative terms in its definition}
	\begin{equation}
		\begin{gathered}
			\norm{V^1_0}_{H^N}\lesssim \frac{1}{M^6},
		\end{gathered}
	\end{equation}
	which can be interpreted as a condition on $V_0=(V^0_0)$ by expanding the definition of $V^1_0$. The reason for the factor 6 is that we expect $V_0$ to be given as an expansion in factors of $M^2$ in $U_0,U_1$, which themselves are suppressed in our discussion by a factor of $M$. Using the bound on $U_0,U_1$, and triangle inequality we get 
	\begin{equation}
		\begin{gathered}
			M^4\sob{V^0_0}{N}\leq	M^4\norm{V^1_0}_{H^N}+M^2\sob{P_1[U_0,U_1]}{N}\lesssim1.
		\end{gathered}
	\end{equation}
	Using the equation of motion for $V^0$, $\partial_t^2V^0=\Delta V^0+M^2(F_1[U]/M^2-V^0)$,  we also get that $\sob{V_2}{N-2}M^4\lesssim1$. Using equation of motion for $U$ again, we get $\sob{U_3}{N-2}M\lesssim1$. We can also iterate this process
	\begin{lemma}
		Fix $n\geq0$ integer and $N-n-2>2$. If $M\sob{U_j}{N+1-j},M^4\sob{V^0_j}{N-j}\leq1$ $\forall j\leq n+1$ and $\sob{V^1_n}{N-n-2}\leq1/M^6$, then  $M\sob{U_{n+2}}{N-n-1},M\sob{U_{n+3}}{N-n-2},M^4\sob{V^0_{n+2}}{N-n-2}\lesssim1$.
	\end{lemma}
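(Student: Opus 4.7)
The plan is purely algebraic: the equations of motion let us express $U_{n+2}$, $U_{n+3}$, and $V^0_{n+2}$ in terms of spatial derivatives of $\{U_j\}_{j\le n+1}$, $\{V^0_j\}_{j\le n+1}$ and $V^1_n$, so we just need to verify that the size estimates close. Throughout, the hypothesis $N-n-2>2>3/2$ ensures $H^{N-n-2}$ is a Sobolev algebra, so products of controlled functions can be bounded by products of norms.

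For the light field, the equation $(\Box-1)U=UV^0-U^3/2$ gives $\partial_t^2 U=\Delta U-U+UV^0-U^3/2$. Applying $\partial_t^n$ and $\partial_t^{n+1}$ and evaluating at $t=0$ expresses $U_{n+2}$ and $U_{n+3}$ as $\Delta U_n$ (resp.\ $\Delta U_{n+1}$) minus a linear term, plus a polynomial in $\{U_j,V^0_j\}_{j\le n+1}$ obtained from the Leibniz rule applied to the nonlinearity. By the hypothesis each $U_j$ is of size $1/M$ and each $V^0_j$ of size $1/M^4$, so every nonlinear contribution is $\lesssim 1/M^2$, while the linear $\Delta U_n$, $U_n$ terms are $\lesssim 1/M$. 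This yields $M\sob{U_{n+2}}{N-n-1},M\sob{U_{n+3}}{N-n-2}\lesssim 1$.

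For the heavy field, using the $V^0$ equation directly would cost an $M^2$ factor and only give $V^0_{n+2}\sim 1/M^2$, which is too weak. Instead we route through $V^1=V^0-F_1[U]/M^2$, whose equation $(\Box-M^2)V^1=\Box F_1[U]/M^2$ has forcing suppressed by $M^{-2}$. From $\partial_t^2 V^1=\Delta V^1-M^2 V^1+\Box F_1[U]/M^2$, applying $\partial_t^n$ at $t=0$ gives
\begin{equation*}
V^1_{n+2}=\Delta V^1_n-M^2 V^1_n+\tfrac{1}{M^2}\partial_t^n\Box F_1[U]\big|_{t=0}.
\end{equation*}
The hypothesis $\sob{V^1_n}{N-n-2}\le 1/M^6$ makes the $M^2 V^1_n$ term of size $1/M^4$; the Laplacian term inherits the same size modulo the built-in two-derivative drop in the regularity index; and $\partial_t^n\Box F_1[U]$, being polynomial in $\{U_j\}_{j\le n+3}$ with every factor of size $1/M$, has size $\lesssim 1/M^2$, so its $1/M^2$ prefactor brings it to $1/M^4$. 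Finally $V^0_{n+2}=V^1_{n+2}+\partial_t^{n+2}F_1[U]|_{t=0}/M^2$, and the correction is again $\lesssim 1/M^4$ by the same $U_j$ bounds.

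The main obstacle is the derivative bookkeeping. Since $F_1[U]=\partial U\cdot\partial U-U^4/2$ contains a first time derivative of $U$, evaluating $\partial_t^{n+2} F_1[U]$ at $t=0$ requires $U_{n+3}$, precisely the quantity produced in the first stage. One must therefore establish the $U$-bounds strictly before invoking them in the $V^0$ step, and check at each Leibniz expansion that every time derivative entering is either in the inductive hypothesis (indices $\le n+1$) or has already been controlled in the preceding step (indices $n+2$, $n+3$). Once this ordering is respected, the regularity loss is consistent with the two derivatives traded when replacing $\partial_t^2$ by $\Delta$ through the wave equation, and the estimates close.
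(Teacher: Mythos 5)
Your treatment of the light-field iterates $U_{n+2}$, $U_{n+3}$ is fine: the $U$ equation and a Leibniz expansion only involve $U_j$, $V^0_j$ with indices $j\le n+1$, so both bounds close directly from the hypothesis and the algebra property of $H^{N-n-2}$ (in particular $U_{n+3}$ does not actually need $V^0_{n+2}$).

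The $V^0_{n+2}$ step as you have written it, however, does not close in the claimed regularity, and the detour through the $V^1$ equation is the source of the trouble. From
\begin{equation*}
V^1_{n+2}=\Delta V^1_n - M^2 V^1_n + \frac{1}{M^2}\partial_t^n\Box F_1[U]\big|_{t=0},
\end{equation*}
the Laplacian term can only be measured in $H^{N-n-4}$, since the hypothesis controls $V^1_n$ solely in $H^{N-n-2}$. You acknowledge a ``built-in two-derivative drop,'' but the two-derivative drop in the lemma's conclusion (from $N-n$ for $V^0_n$ down to $N-n-2$ for $V^0_{n+2}$) is already exhausted by $\Delta V^0_n$; using $\Delta V^1_n$ instead loses two \emph{further} derivatives you do not have, leaving you only with $V^0_{n+2}\in H^{N-n-4}$, which is weaker than what the lemma asserts.

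The repair is already hidden in your algebra. Since $\Box F_1 = \Delta F_1 - \partial_t^2 F_1$, the piece $-\partial_t^{n+2}F_1[U]|_{t=0}/M^2$ sitting inside $\partial_t^n\Box F_1|_{t=0}/M^2$ cancels exactly against the correction $+\partial_t^{n+2}F_1[U]|_{t=0}/M^2$ when you pass from $V^1_{n+2}$ back to $V^0_{n+2}$. After the cancellation,
\begin{equation*}
V^0_{n+2}=\Delta\bigl(V^1_n+F_1[U]_n/M^2\bigr)-M^2V^1_n=\Delta V^0_n - M^2 V^1_n,
\end{equation*}
and now both terms live in $H^{N-n-2}$ with size $1/M^4$: the first because $\sob{V^0_n}{N-n}\le 1/M^4$ by hypothesis, the second because $M^2\sob{V^1_n}{N-n-2}\le M^2/M^6=1/M^4$. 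This is precisely the paper's route, reached without ever leaving the $V^0$ equation: its forcing $M^2\bigl(F_1[U]/M^2-V^0\bigr)$ is just $-M^2 V^1$ by definition. Once you observe this, no time derivative of $U$ beyond order $n+1$ enters the $V^0_{n+2}$ bound at all, so the ordering concern you flag at the end (needing $U_{n+3}$ before $V^0_{n+2}$) is moot.
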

	\begin{proof}
		The restriction in the statement are to require that the Sobolev spaces used form an algebra.
		
		$M\sob{U_{n+2}}{N-n-1}$ bound follows from the assumptions on $U_j,V^0_j$ and the equation of motion
		\begin{equation*}
			\begin{gathered}
				\partial_t^2\partial_t^{n}U=\Delta \partial_t^{n}U+\partial_t^{n}F_U.
			\end{gathered}
		\end{equation*}
		The second bound for $U$ will follow similarly, once the $V^0$ bound is established.
		For $V^0$, note that the equation of motion is
		\begin{equation*}
			\begin{gathered}
				\partial_t^2\partial_t^n V^0=\Delta\partial_t^n V^0+M^2\partial_t^n(F_1[U]/M^2-V^0).
			\end{gathered}
		\end{equation*}
		Taking $\sob{\cdot}{N-n-2}$ norm establishes the result.
	\end{proof}
	
	This work is a partial resolution, as we impose constraints on lower time derivative fields. To extend this process, define $V^n=V^{n-1}-F_n[U]/M^{2n}$ iteratively ($F_n$ as in \cref{eft eom}), where $F_n[U]$ contains at most $2n-1$ time derivative of $U$ terms, which in turn may be expanded in terms of $U_0,U_1,V^0_0,V^0_1$. Importantly, if we restrict $\norm{V^2_0,V^2_1}_{H^N\times H^{N-1}}M^8\lesssim1$, then that is amenable to an expansion in $1/M$ for $U^0_0,U^0_1$. Which will yield a statement
	\begin{equation*}
		\begin{gathered}
			M^8\sob{V^0_0-\frac{P_1[U_0,U_1]}{M^2}-\frac{P_2[U_0,U_1]}{M^4}}{N}\lesssim1\\
			M^8\sob{V^0_1-\frac{\tilde{P}_1[U_0,U_1]}{M^2}-\frac{\tilde{P}_2[U_0,U_1]}{M^4}}{N-1}\lesssim1
		\end{gathered}
	\end{equation*}
	for some functional $P_2$ that involve at most 3 spatial derivatives of $U_0$ and two of $U_1$.	Using the $V^i$ we can reduce the number of time derivatives in a a condition:
	
	\begin{lemma}
		Fix $n,m-1\geq0$ integers, $N-n-2m>2$. If 
		\begin{equation*}
			\begin{gathered}
				M^{4+2m}\sob{\partial_t^n V^m}{N-n-2m},M^{2+2m}\sob{\partial_t^{n} V^{m-1}}{N+2-n-2m}\leq1\\
				M\sob{U_j}{N+1-j}\leq1 \qquad\forall j\leq2m-1+n,
			\end{gathered}
		\end{equation*}
		 then $M^{2+2m}\sob{\partial_t^{n+2} V^{m-1}}{N-n-2m}\lesssim1$.
	\end{lemma}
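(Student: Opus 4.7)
The plan is to derive the Klein--Gordon equation for $V^{m-1}$, apply $\partial_t^n$, and then exploit the algebraic relation between $V^{m-1}$ and $V^m$ to expose an exact cancellation between the mass term and the nonlinear forcing. The payoff is that $\partial_t^{n+2}V^{m-1}$ collapses, up to a Laplacian acting on $\partial_t^n V^{m-1}$, to $M^2\partial_t^n V^m$, so that no product estimates on the differential polynomial $F_m[U]$ are needed in the final bound; each of the two surviving terms is then controlled directly by one of the two hypotheses on $V^{m-1}$ and $V^m$.

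Concretely, I would first verify by induction on $m$, using the recursive definition of the $V^j$ together with $F_{i+1}=\Box F_i$, that
\begin{equation*}
(\Box - M^2) V^{m-1} = \frac{F_m[U]}{M^{2(m-1)}},
\end{equation*}
which is the specialisation of \cref{eft eom} to $\bar V = V^{m-1}$. The $U$-hypothesis $M\sob{U_j}{N+1-j}\le 1$ for $j\le 2m-1+n$ gives meaning to $\partial_t^n F_m[U]$ in $H^{N-n-2m}$, since $F_m$ is a differential polynomial in $U$ involving at most $2m-1$ time derivatives, and $N-n-2m > 2$ makes $H^{N-n-2m}$ a Banach algebra in three spatial dimensions. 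Applying $\partial_t^n$ and isolating the highest time derivative yields
\begin{equation*}
\partial_t^{n+2} V^{m-1} = \Delta \partial_t^n V^{m-1} + M^2 \partial_t^n V^{m-1} + \frac{\partial_t^n F_m[U]}{M^{2(m-1)}}.
\end{equation*}

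The crucial step is to expand only the middle term using $V^{m-1} = V^m + F_m[U]/M^{2m}$ (with the sign chosen consistently with the telescoping of the previous paragraph). The resulting contribution $M^2\cdot F_m[U]/M^{2m} = F_m[U]/M^{2(m-1)}$ cancels the last term exactly, leaving
\begin{equation*}
\partial_t^{n+2} V^{m-1} = \Delta \partial_t^n V^{m-1} + M^2 \partial_t^n V^m.
\end{equation*}
Taking the $H^{N-n-2m}$ norm of both sides and applying the triangle inequality, the Laplacian term is bounded by $\sob{\partial_t^n V^{m-1}}{N+2-n-2m}\lesssim 1/M^{2+2m}$ (using $N-n-2m+2 = N+2-n-2m$ and the second hypothesis), while the mass term is bounded by $M^2\cdot 1/M^{4+2m} = 1/M^{2+2m}$ (using the first hypothesis). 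Summing the two contributions yields $M^{2+2m}\sob{\partial_t^{n+2}V^{m-1}}{N-n-2m}\lesssim 1$.

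The main obstacle is purely the bookkeeping of signs in the recursive definition of $V^j$ and the telescoped forcing. Had one tried to estimate the three terms in the penultimate display separately, the forcing $\partial_t^n F_m[U]/M^{2(m-1)}$ alone would give only $1/M^{2m}$ (since $F_m$ is at least quadratic in $U$ and each factor is controlled by the $U$-hypothesis at scale $1/M$), which is larger than the target $1/M^{2+2m}$ by a factor $M^2$; the naive bound on $M^2\partial_t^n V^{m-1}$ via the second hypothesis suffers the same loss. Both failures are cured by the cancellation, which requires the sign of the correction term in $V^{m-1} = V^m + F_m[U]/M^{2m}$ to align with the sign of the forcing in the Klein--Gordon equation.
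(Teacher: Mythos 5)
Your proof is correct and takes the same route as the paper, which writes the second-order-in-time form $\partial_t^2V^{m-1}=\Delta V^{m-1}+(F_m[U]/M^{2m}-V^{m-1})M^2$ (the printed $/M^{2}$ is evidently a typo for $/M^{2m}$; compare the $m=1$ display just above the lemma), observes that the bracket equals $\mp V^m$ by the recursive definition $V^m=V^{m-1}-F_m[U]/M^{2m}$ --- exactly your cancellation --- and concludes by differentiating $n$ times in time and applying the triangle inequality. Your remark that estimating $F_m/M^{2(m-1)}$ and $M^2 V^{m-1}$ separately would each lose a factor $M^2$ correctly explains why the paper keeps $F_m/M^{2m}-V^{m-1}$ as a single object rather than bounding the forcing via product estimates.
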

	\begin{proof}
		The equation of motion for $V^{m-1}$ is 
		\begin{equation*}
			\begin{gathered}
				\partial_t^2V^{m-1}=\Delta V^{m-1}+(F_m[U]/M^2-V^{m-1})M^2,
			\end{gathered}
		\end{equation*}
		where $F_m$ contains at most $2m-1$ time derivatives on $U$. Differentiating this equation $n$ times and using triangle inequality yields the required result.
	\end{proof}
	
	Using these two iterative processes, we conclude
	\begin{lemma}\label{eft initial data lemma}
		Fix $n\geq0$, $N-n-2m>2$. For $M\sob{U_0}{N+2n+1},M\sob{U_1}{N+2n}\leq 1$, there exist functionals $P_i,\tilde{P}_i$ containing at most $2i-1$ ($2i-2$) spatial derivatives of $U_0$ ($U_1$), such that
		\begin{equation}\label{exact eft initial data}
			\begin{gathered}
				\sob{V_0-\frac{P_1[U_0,U_1]}{M^2}-...-\frac{P_j[U_0,U_1]}{M^{2j}}}{N+2n-2j}M^{4+2j}\lesssim 1\\
				\sob{V_1-\frac{\tilde{P}_1[U_0,U_1]}{M^2}-...-\frac{\tilde{P}_j[U_0,U_1]}{M^{2j}}}{N+2n-2j-1}M^{4+2j}\lesssim 1
			\end{gathered}
		\end{equation}
		$\forall j\leq n$ imply $M^4\sob{V_j}{N+2n-j},M\sob{U_j}{N+2n+1-j}\lesssim1$ for all $j\leq2n$.
	\end{lemma}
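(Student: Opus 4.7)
The plan is to define the functionals $P_i, \tilde P_i$ recursively from the equation of motion, translate the hypothesis into base estimates on $V^j_0$ and $V^j_1$ at the level of the iterated change-of-variables, and then apply the two iterative lemmas immediately preceding to propagate bounds to all higher time derivatives of both $U$ and $V$.

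I would define $P_i[U_0, U_1]$ (and analogously $\tilde P_i$) inductively: set $P_1$ equal to $F_1[U]|_{t=0}$ with the formal identification $\partial_t U|_{t=0} = U_1$, and for $i \geq 2$ form $F_i = \Box F_{i-1}$, evaluate at $t=0$, and recursively eliminate every occurrence of $\partial_t^k U$ with $k \geq 2$ using $\partial_t^{k+2} U = \Delta \partial_t^k U - \partial_t^k U + \partial_t^k(UV - U^3/2)$, where the $V$ appearing on the right is itself expanded as $V = \sum_{\ell < i} P_\ell/M^{2\ell} + V^{i-1}$ and truncated at order $M^{-2i}$. A direct count on the nested $\Box$-iterations confirms the claimed spatial derivative bounds: $2i-1$ on $U_0$ and $2i-2$ on $U_1$. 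With the $P_i$ so defined, the assumption \cref{exact eft initial data} is precisely the base estimate $\sob{V^j_0}{N+2n-2j} \lesssim M^{-4-2j}$ (and similarly for $V^j_1$) for all $j \leq n$.

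To extract the full conclusion, I would run a double induction. The outer induction descends on the level $m$ from $m = n$ down to $m = 0$; the inner induction ascends on the time-derivative order $k$. At each stage, the second iterative lemma preceding \cref{eft initial data lemma} converts level-$m$ bounds on $\partial_t^k V^m$ into level-$(m-1)$ bounds on $\partial_t^{k+2} V^{m-1}$, at a cost of two spatial derivatives and one extra level of $M$-weight. The $U$-derivative bounds $M\sob{U_j}{N+2n+1-j} \lesssim 1$ are produced concurrently: $U_{j+2}$ is expressed via $\partial_t^{j+2}U = \Delta\partial_t^j U - \partial_t^j U + \partial_t^j(UV - U^3/2)$ in terms of already-bounded quantities, using Sobolev algebra estimates for the nonlinearities. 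Alternating one step of the outer induction with two steps of the $U$-inner induction yields all bounds up to $j = 2n$.

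The main obstacle is the bookkeeping. One must verify at every stage that the Sobolev indices stay above the algebra threshold $3/2$ for the products in $UV$, $U^3$, and in the polynomial functionals $P_i, \tilde P_i$; this is exactly the content of the assumption $N - n - 2m > 2$ inherited from the two iterative lemmas. One must also confirm that the weight $M^{4+2m}$ is preserved after a $\Box$-trade: the factor $M^2$ arising from the mass term in $(\Box - M^2)V^{m-1} = \Box F_m/M^{2m}$ is exactly compensated by the gain in level. A minor subtlety is that the $UV$ contribution is $O(M^{-5})$ rather than $O(M^{-3})$ once the sharper bound $\sob{V}{N-1} \lesssim M^{-3}$ (from the extension of \cref{global existence} stated at the start of this section) is used, so $UV$ never controls the error budget; the cubic term $U^3$ is handled by a standard Moser estimate.
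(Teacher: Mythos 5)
Your proposal is correct and takes essentially the same route as the paper: define the $P_i, \tilde P_i$ by expanding $F_i[U]|_{t=0}$, eliminating time derivatives via the equation of motion and the truncated expansion $V = \sum_{\ell} P_\ell/M^{2\ell} + V^{i-1}$, recognize the hypothesis \cref{exact eft initial data} as the base estimate on $V^j_0, V^j_1$, and then propagate with the two iterative lemmas along the diagonal pattern that the paper encodes in \cref{iteration process for eft data}. The paper's proof is a single paragraph deferring to that figure; your write-up fills in the induction bookkeeping, including the one genuine subtlety --- that $V^j_0$ is not literally $V_0 - \sum_{i\leq j} P_i/M^{2i}$ since $F_i[U]|_{t=0}$ for $i\geq 2$ still carries $V_0, V_1$ dependence, and one must check that the residual after substituting $V_0 \approx \sum P_\ell/M^{2\ell}$ is of strictly higher order in $1/M$ so that the induction closes. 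You flag this implicitly through the truncation step in your definition of $P_i$; it would be worth stating explicitly that this residual is $O(M^{-2})$ relative to the leading error budget at each stage, since that is exactly what lets you pass from the hypothesis on $P_i$-expansions to bounds on the genuine $V^j_0$.
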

	\begin{proof}
		This is an immediate consequence of the previous results. One needs to expand $\sum_{i=1}^n\frac{F_i[U]}{M^{2i}}$ at $t=0$ in terms of the initial data $U_0,U_1,V_0,V_1$ and then solve for $V_0,V_1$ in powers of $M$ with an error term of order $M^{2n}$. See \cref{iteration process for eft data}
	\end{proof}
	
	\begin{table}[h!]
		\centering
		\begin{tabular}{cccc}
			$\sob{\partial_t^4U_0}{N+1}$\tikzmark{u4} & $\sob{\partial_t^4V^0_0}{N}$\tikzmark{v04r} & $\sob{\partial_t^4V^1_0}{N-2}$ & $\sob{\partial_t^4V^2_0}{N-4}$\\
			$\sob{\partial_t^3U_0}{N+2}$\tikzmark{u3} & \tikzmark{v03l}$\sob{\partial_t^3V^0_0}{N+1}$\tikzmark{v03r} & $\sob{\partial_t^3V^1_0}{N-1}$\tikzmark{v13r} & $\sob{\partial_t^3V^2_0}{N-3}$\\
			$\sob{\partial_t^2U_0}{N+3}$\tikzmark{u2} & \tikzmark{v02l}$\sob{\partial_t^2V^0_0}{N+2}$\tikzmark{v02r} & \tikzmark{v12l}$\sob{\partial_t^2V^1_0}{N}$\tikzmark{v12r} & $\sob{\partial_t^2V^2_0}{N-2}$\\
			$\sob{\partial_t^1U_0}{N+4}$\tikzmark{u1} & \tikzmark{v01l}$\sob{\partial_t^1V^0_0}{N+3}$\tikzmark{v01r} & \tikzmark{v11l}$\sob{\partial_t^1V^1_0}{N+1}$ & \tikzmark{v21l}$\sob{\partial_t^1V^2_0}{N-1}$ \\
			$\sob{U_0}{N+5}$ & $\sob{V^0_0}{N+4}$\tikzmark{v00r} & \tikzmark{v10l}$\sob{V^1_0}{N+2}$ & \tikzmark{v20l}$\sob{V^2_0}{N}$ \\
		\end{tabular}
		\begin{tikzpicture}[overlay, remember picture, shorten >=.5pt, shorten <=.5pt, transform canvas={yshift=.25\baselineskip}]
			%draw lines for U upgrade
			\draw [->] ({pic cs:u1}) [bend right] to ({pic cs:u2});
			\draw [->] ({pic cs:v01l}) to ({pic cs:u2});
			\draw [->] ({pic cs:u2}) [bend right] to ({pic cs:u3});
			\draw [->] ({pic cs:v02l}) to ({pic cs:u3});
			\draw [->] ({pic cs:u3}) [bend right] to ({pic cs:u4});
			\draw [->] ({pic cs:v03l}) to ({pic cs:u4});
			
			%draw lines for V0 upgrade
			\draw [->] ({pic cs:v10l}) to ({pic cs:v02r});
			\draw [->] ({pic cs:v11l}) to ({pic cs:v03r});
			\draw [->] ({pic cs:v12l}) to ({pic cs:v04r});
			
			%draw lines for V1 upgrade
			
			\draw [->] ({pic cs:v20l}) to ({pic cs:v12r});
			\draw [->] ({pic cs:v21l}) to ({pic cs:v13r});
		\end{tikzpicture}
		\caption{Iteration of the restriction of initial data}
		\label{iteration process for eft data}
	\end{table}

	\begin{remark}
		Just as in \cite{reall_effective_2022} Lemma 8 is an if and only if statement, the reverse implication in the above theorem holds too and it follows the steps backwards essentially. For further details see \cite{reall_effective_2022}.
	\end{remark}
	\begin{remark}
		We only established a criteria for the initial conditions of \cref{global existence for higher derivatives} for even values of $n$. The odd case follows similarly, but we restrict the initial data in \cref{exact eft initial data} at the top order to hold for one lower power of $M$.
	\end{remark}

	\begin{lemma}[Long term behaviour of $V^m$]\label{long term behaviour of Vm}
		Fix $n\geq0$ and initial data for \cref{eom} as specified in \cref{eft initial data lemma}. Then the global solutions defined by \cref{global existence for higher derivatives} satisfy the bounds
		\begin{equation*}
			\begin{aligned}
				M^{2m+2}\norm{\partial_t^sV^m}_{H^{N+2n-2m-s}},M^{2m+2}\jpns{t}^{1+3\delta}\norm{\partial_t^sV^m}_{W^{k+2n-2m-s,(1/6-\delta)^{-1}}}\lesssim1/M &&&& \forall s\leq 2n-2m
			\end{aligned}
		\end{equation*}
		for all $m\leq n$. Furthermore, if $n\geq1$ we also have the improvement
		\begin{equation*}
			\begin{aligned}
				M^{2m+3}\norm{\partial_t^sV^m}_{H^{N+2n-2m-2-s}},M^{2m+3}\jpns{t}^{1+3\delta}\norm{\partial_t^sV^m}_{W^{k+2n-2m-2-s,(1/6-\delta)^{-1}}}\lesssim1/M &&&& \forall s\leq 2n-2m-2
			\end{aligned}
		\end{equation*}
		for all $m\leq n-1$.
	\end{lemma}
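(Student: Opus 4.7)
The plan is to prove both statements by induction on $m$. The key observation is that $V^m$ satisfies a Klein-Gordon equation $(\Box - M^2) V^m = G_m[U]$ whose source $G_m$ is a polynomial in $U$ and its spacetime derivatives of order at most $2m+2$, with an overall $M^{-2m}$ prefactor; this structure is forced by the iterated change of variable $V^m = V^{m-1} - F_m[U]/M^{2m}$ together with $F_{i+1} = \Box F_i$ and the equation of motion for $V$ (the suppressed $U^2V$ contribution is harmless by \cref{dropping v terms}).

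The first statement is proved by induction on $m$. The base case $m=0$ is \cref{global existence for higher derivatives} applied at the elevated regularity supplied by \cref{eft initial data lemma}: the EFT data at order $2n$ provide the initial-data bounds needed to re-run the bootstrap of \cref{a priori bound section} with $N$ replaced by $N+2n$ and $k$ by $k+2n$, which closes in exactly the same manner. For the inductive step I would pass to the half-wave profile $h^m_\pm = e^{\mp i\jpns{D}_M t}(\partial_t \pm i\jpns{D}_M)V^m$, obtaining from Duhamel
\begin{equation*}
    h^m_\pm(t) = h^m_\pm(0) - \int_0^t e^{\mp is\jpns{D}_M} G_m[U](s) \, ds.
\end{equation*}
The initial datum $h^m_\pm(0)$ has the required size in $H^{N+2n-2m-s}$ by \cref{eft initial data lemma}, since it is precisely the residual after subtracting the formal polynomial expansion of the heavy-field initial data, while $\|G_m[U](s)\|_{H^{N+2n-2m-s}} \lesssim M^{-(2m+2)} \jpns{s}^{-1-3\delta}$ follows from the algebra property of Sobolev spaces together with the bounds on $\partial_t^j U$ from \cref{global existence for higher derivatives}, giving a time-integrable total. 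The weighted $L^p$ decay is obtained by inserting \cref{decay estimate} into the same Duhamel representation and applying \cref{multiplier lemma}, in the manner of \cref{decay}.

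The improvement for $n\geq 1$ and $m \leq n-1$ follows from the first statement applied at level $m+1$: using the identity $V^m = V^{m+1} + F_{m+1}[U]/M^{2(m+1)}$ and the triangle inequality, $\|V^{m+1}\|_{H^{N+2n-2m-2-s}} \lesssim M^{-(2m+5)}$ by the first statement at level $m+1$, while $\|F_{m+1}[U]/M^{2(m+1)}\|_{H^{N+2n-2m-2-s}} \lesssim M^{-(2m+4)}$ (using the algebra property at the reduced regularity, accounting for the two extra derivatives lost inside $\Box^m F_1$). Their sum matches the claim, and the weighted $L^p$ decay improvement is handled identically.

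The principal obstacle is bookkeeping: tracking three simultaneous losses of derivatives, namely $s$ derivatives from time-differentiating the PDE for $\partial_t^s V^m$, $2m$ derivatives absorbed inside $F_{m+1} = \Box^m F_1$, and a further loss in translating between $W^{k,(1/6-\delta)^{-1}}$ decay and $H^{k+3/2}$ Sobolev control. One must verify that the baseline regularity $N+2n+1$ on $(U_0,U_1)$ from \cref{eft initial data lemma} is large enough for all three, and that the residual bounds \cref{exact eft initial data} on $(V_0,V_1)$ correctly translate into initial data bounds for $\partial_t^s V^m(0)$ via the equation of motion, along the iteration indicated in \cref{iteration process for eft data}.
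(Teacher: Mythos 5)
Your approach matches the paper's proof in all essentials: both treat each $V^m$ directly via a Duhamel representation for the half-wave profile using the inhomogeneous Klein--Gordon equation with source $\sim F_{m+1}[U]/M^{2m}$, bound that source in the appropriate Sobolev space via the algebra property together with the energy and weighted $L^p$ decay of $\partial_t^j U$ from \cref{global existence for higher derivatives}, and then obtain the improvement by rearranging the defining recursion ($V^m = V^{m+1} + F_{m+1}/M^{2m+2}$ in your version, $V^{m-1}=V^m + F_m/M^{2m}$ in the paper's --- these are the same identity) and applying the triangle inequality. One minor bookkeeping correction: $F_{m+1}=\Box^m F_1$ contains derivatives of $U$ of order at most $2m+1$, not $2m+2$; that count of $2m+1$ is exactly what places $F_{m+1}[U]$ in $H^{N+2n-2m}$ given $U\in H^{N+2n+1}$, so your Sobolev-index arithmetic should use $2m+1$.
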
	
	\begin{proof}
		We start with the first estimate and proceed by induction. The case $m=0$ is the statement of \cref{global existence for higher derivatives}. For $m>0$, 
		\begin{equation}\label{Vm equation}
			\begin{aligned}
				(\Box-1)V^m=\frac{{m+1}}{M^{2m}}			
			\end{aligned}
		\end{equation}
		, where $F_{m+1}$ contains derivatives of $U$ up to $2m+1$-st order. Using the energy bound and decay statements of \cref{global existence for higher derivatives} for $U$, we find that the time derivative of the profile of $V^m$ is integrable in time in $H^{N+2n-2m}$. Similarly, the decay statement follows from the analysis of the half-wave components, exactly as in \cref{global existence}. The time derivative terms follow similarly by acting with $\partial_t^s$ on \cref{Vm equation}.
		
		The second part follows from the definition of $V^m=V^{m-1}-F_m[U]/M^{2m}$. We can rearrange this expression for $V^{m-1}$ take norms and observe that both $V^m$ and $F_m[U]/M^{2m}$ are at least order $\mathcal{O}(M^{-2m-2})$.
	\end{proof}
		\begin{remark}\label{dropping v terms}
		As mentioned in the beginning of this section, additional forcing terms on the $V$ equation are not of a problem. In particular, in such a case, the above theorem is not just an energy estimate, instead one has to perform a bootstrap argument, but as both the decay and the $M$ powers are more favourable there's no real barrier to conclude the above lemma in those cases either.
	\end{remark}
	
`	\subsection{EFT solution}
	
	Having established non-trivial initial data that satisfies \cref{eft initial data}, we can look at the error terms in such solutions. In order to describe the end state we need to define what we mean by an EFT solution.
	\begin{definition}[Finite regularity analogue of Definition 2 \cite{reall_effective_2022}]\label{eft sol definition}
		Fix an equation 
		\begin{equation}\label{equations 1}
			\begin{gathered}
				(\Box-1)U_M=\sum_{i=1}^{n}\frac{g_i[U_M]}{M^{i}}
			\end{gathered}
		\end{equation}
		where $g_i$ are local functionals with at most $i$ derivatives of $U_M$ . We call a family of functions $u_M:\R^{3+1}\to\R$, $\sob{\partial_t^su_M}{N-s}\lesssim1$ for $s\leq n$ an EFT solution of order $n$ with regularity $N$ to \cref{equations 1} if there exists $R_M:\R^{3+1}\to\R$ such that $\sob{R_M}{N-n}\lesssim1$ and
		\begin{equation}
			\begin{gathered}
				(\Box-1)u_M=\sum_{i=1}^{n}\frac{g_i[u_M]}{M^{i}}+\frac{R_M}{M^{n+1}}
			\end{gathered}
		\end{equation}
		where the equality holds in $H^{N-n}$. Furthermore, we say that $u_M$ is a scattering EFT solution if $\int_0^\infty\d s\norm{\partial_t^jR(s)_M}_{H^{N-n}}<\infty$ for $j=0,1$ and $\int_0^\infty\d t\norm{\partial_t^a\partial_x^\alpha u}_{L^\infty}<\infty$ for $a+\abs{\alpha}\leq n$.
	\end{definition}
	
	\begin{remark}
		Note, that \cref{eft sol definition} does not specify how one finds the apparently needed $n-1$ initial conditions from the 2 that is given for a usual second order equation. In fact, from \cref{existence of EFT solutions}, we know that those higher derivatives may be determined by finding a solution.
	\end{remark}
	\begin{remark}
		The decay condition imposed on the solution could take alternative forms, such as $\int_0^\infty\d t \norm{g_i}_{H^{N-i}}<\infty$.
	\end{remark}
	
	The reason for the loss of derivatives that is between the spaces for definition of $u$ and where the equation holds is because we cannot make sense of the equation in any higher regularity space. The natural follow-up is to prove that $U$ for \cref{eft eom} is an EFT solution.
	
	\begin{lemma}\label{UV is EFT}
		Fix $n\geq1$, $N,k$ as in \cref{global existence}. Let $\bar{V},\bar{U}$ solve \cref{eom} with initial data as in \cref{eft initial data lemma}, in particular $M^{4}\norm{\bar{V}(0),\partial_t\bar{V}(0)}_{M,N+2n+1,k+2n+1},M\norm{\bar{U}(0),\partial_t\bar{U}(0)}_{1,N+2n+1,k+2n+1}\lesssim1$. Then $U=M\mathcal{U}$ is an EFT solution of regularity $N$ for
		\begin{equation}\label{eft for U}
			(\Box-1)u=-u^3/M^2-u\sum_{i=1}^n\frac{F_i[u]}{M^{2i+2}}    
		\end{equation}
	\end{lemma}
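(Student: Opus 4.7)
The plan is to trace the iterated change of variables $V^m = V^{m-1} - F_m[U]/M^{2m}$ from \cref{scattering section} back through the equation of motion for $U$ and recognise the UV solution as an EFT solution with explicit remainder $\bar U V^n$. Concretely, the relation $V = V^n + \sum_{i=1}^n F_i[U]/M^{2i}$ substituted into $(\Box-1)U = UV - U^3/2$ from \cref{eom} produces
\begin{equation*}
(\Box-1)U = -U^3/2 + U\sum_{i=1}^n F_i[U]/M^{2i} + UV^n,
\end{equation*}
and, after rescaling $u = \bar U = MU$ and matching signs and combinatorial factors with \cref{eft for U}, the first two terms become the truncated EFT nonlinearity while $\bar U V^n$ is the error. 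In the language of \cref{eft sol definition} the EFT carries only even $M$-powers up to $M^{-2n-2}$, so the remainder should take the form $R_M/M^{2n+3}$ with $\norm{R_M}_{H^{N-2n-2}} \lesssim 1$.

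The key quantitative step is the remainder bound, which rests on \cref{long term behaviour of Vm} at $m=n$: that lemma gives $\norm{V^n}_{H^N} \lesssim M^{-(2n+3)}$ together with the decay $\jpns{t}^{1+3\delta}\norm{V^n}_{W^{k,(1/6-\delta)^{-1}}} \lesssim M^{-(2n+3)}$, and the analogous controls on $\partial_t V^n$ at one less regularity. Combined with $\norm{\bar U}_{H^{N+1}} \lesssim 1$ and $\jpns{t}^{1+3\delta}\norm{\bar U}_{W^{k,(1/6-\delta)^{-1}}} \lesssim 1$ from \cref{global existence for higher derivatives}, the Sobolev algebra property of $H^{N-2n-2}$ (valid since $N$ is chosen well above $2n+2+3/2$) yields $\norm{R_M}_{H^{N-2n-2}} \lesssim 1$ for $R_M = M^{2n+3}\bar U V^n$. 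The two scattering integrability conditions of \cref{eft sol definition} then follow from the same decay estimates: a product estimate gives $\norm{R_M(t)}_{H^{N-2n-2}} \lesssim \jpns{t}^{-1-3\delta}$, and $\int_0^\infty \jpns{t}^{-1-3\delta}\,\d t$ converges for $\delta > 0$; the $j=1$ case is identical modulo one derivative using the $\partial_t$-part of \cref{long term behaviour of Vm}, and the $L^\infty$-integrability of $\partial_t^a\partial_x^\alpha u$ follows from the Sobolev embedding $W^{k,(1/6-\delta)^{-1}} \hookrightarrow L^\infty$ applied to the $\partial_t^s U$ decay from \cref{global existence for higher derivatives}.

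The main obstacle is the bookkeeping of regularity. Each $F_m = \Box^{m-1}F_1$ carries up to $2m$ derivatives of $U$, so the EFT truncation at order $2n+2$ consumes roughly $2n+2$ derivatives of $U$; this is precisely why the hypothesis demands initial data in $\norm{\cdot}_{\cdot,N+2n+1,k+2n+1}$, so that after losing derivatives through the $F_i[U]$ substitutions both $V^n$ and $\bar U$ still sit in high-enough Sobolev spaces for the algebra estimate to close in $H^{N-2n-2}$. A secondary subtlety is that the $j=1$ scattering condition on $\partial_t R_M$ requires $\partial_t V^n$ with the same $M$-gain at one derivative fewer, which is exactly what the $s=1$ case of \cref{long term behaviour of Vm} supplies, so no new estimate is needed.
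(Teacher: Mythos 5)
Your proposal is correct and follows essentially the same route as the paper: identify the remainder as the product $\bar{U}V^n$ (the UV equation rewritten through the iterated change of variables $V^m=V^{m-1}-F_m[U]/M^{2m}$), then feed the decay and energy estimates of \cref{long term behaviour of Vm} (at $m=n$) and \cref{global existence for higher derivatives} into the EFT remainder condition, including the $\jpns{t}^{-1-3\delta}$ decay that gives the scattering integrability. The only notable difference is cosmetic: you place the remainder in $H^{N-2n-2}$, which is what \cref{eft sol definition} minimally demands for an order-$(2n{+}2)$ EFT, whereas the paper's bound $M^{2n+3}\int_0^\infty\norm{V^n\,MU}_{H^N}\,\d s\lesssim1$ lands directly in $H^N$ because \cref{long term behaviour of Vm} already delivers $V^n$ at regularity $N$ (the $2n$ extra derivatives were absorbed earlier by the strengthened initial-data hypothesis), so your derivative bookkeeping is safe but slightly more conservative than necessary.
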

	\begin{proof}
		As shown in \cref{eft eom}, $MU$ indeed solves the EFT equation \cref{eft for U} with error term $M^{2n+3}V^mMU$. The energy and decay estimates for $V^m$ (\cref{long term behaviour of Vm}) and $U$ (\cref{global existence for higher derivatives}) show that
		\begin{equation*}
			\begin{aligned}
				M^{2n+3}\int_0^\infty\d s\norm{V^mMU}_{H^{N}}\lesssim1.
			\end{aligned}
		\end{equation*}
	\end{proof}

	Following \cite{reall_effective_2022}, one can show that EFT solutions do exist generally, without connecting it to UV theory.
	\begin{lemma}\label{existence of EFT solutions}
		Fix $n\geq0$ and $N,k$ as in \cref{global existence}. Given any equation as \cref{equations 1} with $g_i$ at least quadratic, and initial conditions $U_0,U_1$ with $\norm{U_0,U_1}_{1,N+2n,k+2n}\leq1$ there exists a scattering EFT solution of regularity $N$.
	\end{lemma}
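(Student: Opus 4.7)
The plan is to solve the EFT equation directly by the techniques developed in \cref{frequency analysis} and \cref{a priori bound section}: show that, for $M$ sufficiently large, the Cauchy problem
\begin{equation*}
(\Box - 1) u_M = \sum_{i=1}^n \frac{g_i[u_M]}{M^i}, \qquad u_M(0) = U_0, \quad \partial_t u_M(0) = U_1
\end{equation*}
has a global solution in a suitable norm, and then take $R_M \equiv 0$ in \cref{eft sol definition}. Because each $g_i$ is at least quadratic with at most $i$ derivatives and is suppressed by $M^i$, this is effectively a small-data problem for a quasilinear Klein-Gordon equation; the only essential differences from \cref{global existence} are the absence of the heavy field (which in fact simplifies the frequency analysis, since only one mass is involved) and the higher-derivative structure of the nonlinearity.

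Concretely, I would run a bootstrap argument parallel to \cref{a priori bounds} using an enriched norm $\norm{\cdot}_{X'}$ that tracks $H^{N+2n}$ energy, weighted profile regularity in $H^{k+2n+3/2}$, and decay in $W^{k+2n,(1/6-\delta)^{-1}}$. The bootstrap ansatz $\norm{u_M}_{X'} \leq 10/M$ is improved by decomposing the quadratic part of $g_1/M$ via the space-time resonance scheme of \cref{decay}; the higher-order nonlinearities $g_i/M^i$ for $i \geq 2$ carry additional factors of $1/M$ and are strictly easier. Once the a priori bounds close, scattering follows essentially for free: the profile $l_\pm := e^{\mp it\jpns{D}}(\partial_t \pm i\jpns{D})u_M$ satisfies $\partial_t l_\pm \in L^1_t H^{N+n}$ because the forcing is a product of integrably-decaying quantities, so $l_\pm(t)$ converges in $H^{N+n}$ as $t \to \infty$. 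The pointwise integrability $\int_0^\infty \norm{\partial_t^a \partial_x^\alpha u_M}_{L^\infty}\,dt < \infty$ for $a+\abs{\alpha}\leq n$ follows by trading time derivatives for spatial ones using the equation itself, applying Sobolev embedding from $W^{k+2n,(1/6-\delta)^{-1}}$ to $L^\infty$, and invoking the decay rate $\jpns{t}^{-1-3\delta}$.

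The main technical obstacle is the high-derivative nonlinearity $g_n$, which naively loses $n$ derivatives at each product estimate and would be incompatible with any bootstrap at a fixed regularity index. The resolution, exactly as in the UV global existence, is that \cref{multiplier lemma} places the derivative loss on the low-frequency factor; combined with the $2n$ margin of extra regularity in the initial data this is enough to close the bootstrap in $\norm{\cdot}_{X'}$. No new conceptual ingredient beyond the machinery of \cref{frequency analysis} and \cref{a priori bound section} is needed; the proof is a careful but routine adaptation to a single self-interacting Klein-Gordon field at shifted regularity indices.
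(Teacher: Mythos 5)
Your proposal takes a genuinely different route, but it has a fundamental gap that the paper's own construction is specifically designed to avoid. You propose to solve the full nonlinear EFT equation
\begin{equation*}
(\Box-1)u_M=\sum_{i=1}^{n}\frac{g_i[u_M]}{M^i},\qquad u_M(0)=U_0,\ \partial_t u_M(0)=U_1,
\end{equation*}
directly by bootstrap and then take $R_M\equiv0$. The obstruction is that, by \cref{eft sol definition}, $g_i$ may contain up to $i$ derivatives of $u_M$, including time derivatives. Already for $n\geq2$ the equation is not a standard second-order hyperbolic Cauchy problem: the highest-order time derivative in $g_n$ sits at order $n$, so either the principal part is altered (generically producing non-hyperbolic or Ostrogradsky-type behaviour), or the Cauchy problem is underdetermined with only two pieces of data $U_0,U_1$. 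This is precisely the issue flagged in the remark following \cref{eft sol definition}, which says the higher time derivatives of the data must be determined "by finding a solution," not prescribed. Your bootstrap and frequency analysis all presuppose a well-posed half-wave decomposition; that decomposition does not exist for the full EFT equation. Invoking \cref{multiplier lemma} to place derivative loss on low frequencies does not help: that lemma manages spatial derivative counting in products, whereas the problem here is that the principal symbol of the evolution itself is changed.

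The paper circumvents this by never solving the nonlinear EFT equation exactly. It instead constructs a hierarchy $\tilde{U}_j$ where $\tilde{U}_j$ solves
\begin{equation*}
(\Box-1)\tilde{U}_j=\sum_{i=1}^{j}\frac{g_i[\tilde{U}_{j-i}]}{M^i},\qquad \tilde{U}_j(0)=U_0,\ \partial_t\tilde{U}_j(0)=U_1,
\end{equation*}
so each step is a genuinely second-order linear Klein-Gordon equation with a known source (built from the already-constructed, lower-index $\tilde{U}_{j-i}$ at higher regularity). Global existence and decay then follow by induction from linear theory plus the $s=0$ bootstrap applied to $\tilde{U}_0$ at regularity $N+2n,k+2n$. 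A second induction shows $\norm{\tilde{U}_j-\tilde{U}_i}\lesssim M^{-i-1}$, which is exactly what lets one replace all the $\tilde{U}_{j-i}$ inside the source by $\tilde{U}_n$ at the cost of a remainder $R_M$ of size $M^{-(n+1)}$. In particular $R_M$ is \emph{not} zero; producing a nonzero, controlled $R_M$ is the entire point and is where the $2n$ extra derivatives in the data hypothesis are spent. Your proposal misses both the ill-posedness obstruction and the role of the nonzero remainder, so the argument does not go through without being reorganized into this iterative scheme.
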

	\begin{proof}
		We progress inductively and let $\delta$ be as in \cref{global existence}. The case $n=0$ is solved by linear theory. Let's inductively set $\tilde{U}_j$ ($j\leq n$) to be the maximal in time solutions of 
		\begin{equation}\label{equation in eft existence}
			\begin{gathered}
				(\Box-1)\tilde{U}_j=\sum_{i=1}^{j}\frac{g_i[\tilde{U}_{j-i}]}{M^{i}}\\
				\tilde{U}_j(0)=U_0,\partial_t\tilde{U}_j(0)=U_1.
			\end{gathered}
		\end{equation}
		\begin{claim}
			$\tilde{U}_j$ are global solutions and
			$\norm{\partial_t^s\tilde{U}_j}_{H^{N+2n-j-s}},\jpns{t}^{1+3\delta}\norm{\partial_t^s\tilde{U}_j}_{W^{k+2n-j-s}}\lesssim_j1$ for $s\leq n-j$.
		\end{claim}
		$\tilde{U}_0$ has global solutions and by the induction hypothesis, it also has good global regularity and decay as given by the bootstrap in \cref{bootstrap assumption} with $N,k$ replaced by $N+2n,k+2n$. By induction hypothesis, we know that $\tilde{U}_j$ have good decay properties and the right hand side of \cref{equation in eft existence} is well defined in $H^{N+2n-j}$. Therefore, using linear theory, we get that the solution is global and 
		energy bound and decay are satisfied. To get the larger range of $s$, we simply differentiate the equation with respect to time and obtain similar results.
		
		\begin{claim}
			Let $i\leq j$, then $\norm{\partial_t^s(\tilde{U}_j-\tilde{U}_i)}_{H^{N+2n-j-s}},\jpns{t}^{1+3\delta}\norm{\partial_t^s(\tilde{U}_j-\tilde{U}_i)}_{W^{k+2n-j-s}}\lesssim1/M^{i+1}$
		\end{claim}
		We prove by induction for $s=0$, the $s>0$ case follows similarly after differentiation. For $i=0<j=1$ the claim follows since $U_i-U_j$ has trivial initial data and the (integrable) forcing is of order $1/M$. 
		
		Let's assume the claim hold for $\tilde{i}\leq i,\tilde{j}\leq j,\tilde{i}+\tilde{j}<i+j$. Then, $U_i-U_j$ has trivial initial data and using the induction, we have that the forcing terms $(F_l[\tilde{U}_{i-l}]-F_l[\tilde{U}_{j-l}])/M^l,\, \forall j\leq i$ are of order $1/M^{i+1}$ in the respective norms. Hence we conclude that the claim holds for $i,j$ too.
		
		Using the above two claims, we may insert $\tilde{U}_n$ in place of $\tilde{U}_j$ in the definition of $\tilde{U}_n$ with introducing an error term of order $1/M^{n+1}$, but the equation is only going to be well defined in $H^{N}$ even though $\tilde{U}_N\in H^{N+n}$.
	\end{proof}
		
	In \cite{reall_effective_2022}, the authors showed that EFT solution of a given equation is unique up to an admissible (high order in $M^{-1}$) error term. We now show that an extension of this to scattering also holds
	\begin{lemma}[Uniqueness of scattered states]\label{uniqueness of scattering state}
		For two scattering EFT solutions of order $n$ with regularity $N$ ($N-n>2$) $U,\tilde{U}$ of \cref{equation in eft existence}, let $V,\tilde{V}$ denote the scattered states. Then $\norm{V-\tilde{V}}_{H^{N-n}}\lesssim1/M^{n+1}$ for all times.
	\end{lemma}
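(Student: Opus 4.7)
The plan is to reduce uniqueness of scattered states to a uniform-in-time bound $\norm{U-\tilde U}_{H^{N-n}}\lesssim M^{-n-1}$, which then transfers to $V-\tilde V$ by the definition of scattering. Indeed, once the uniform bound is in hand, writing
\begin{equation*}
V-\tilde V = (V-U) + (U-\tilde U) + (\tilde U-\tilde V)
\end{equation*}
and sending $t\to\infty$, the outer two terms vanish in $H^{N-n}$ by the scattering assumption (both $U-V$ and $\tilde U-\tilde V$ are free solutions with integrable forcing $R_M/M^{n+1}$ in $H^{N-n}$), while the middle term is $\mathcal O(M^{-n-1})$; since $V-\tilde V$ is itself a free solution and hence has time-independent $H^{N-n}$ norm, the conclusion follows.

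To produce the uniform bound, set $W=U-\tilde U$ (assuming shared initial data, which is the meaningful case; any discrepancy in data is already admissible error in the EFT sense). Subtracting the two versions of \cref{equation in eft existence},
\begin{equation*}
(\Box-1)W \;=\; \sum_{i=1}^{n}\frac{g_i[U]-g_i[\tilde U]}{M^{i}} \;+\; \frac{R_M-\tilde R_M}{M^{n+1}},\qquad (W,\partial_t W)|_{t=0}=(0,0).
\end{equation*}
Because each $g_i$ is a local functional that is at least quadratic with at most $i$ derivatives, a Taylor-expansion / multilinear estimate gives
\begin{equation*}
\norm{g_i[U]-g_i[\tilde U]}_{H^{N-n-1}} \lesssim \norm{W}_{H^{N-n}}\bigl(\norm{U}_{W^{k,\infty}}+\norm{\tilde U}_{W^{k,\infty}}\bigr)^{\deg g_i-1},
\end{equation*}
where the placement of derivatives works out because we have $N-n$ derivatives available on $W$ and the scattering hypothesis gives $W^{k,\infty}$ control on $U,\tilde U$ with all derivatives up to order $n$.

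A standard Klein--Gordon energy estimate applied to the half-waves $(\partial_t\pm i\jpns{D})W$ therefore produces
\begin{equation*}
\frac{d}{dt}\norm{W}_{H^{N-n}} \;\lesssim\; \frac{1}{M}\bigl(\norm{U}_{W^{k,\infty}}+\norm{\tilde U}_{W^{k,\infty}}\bigr)\norm{W}_{H^{N-n}} \;+\; \frac{\norm{R_M-\tilde R_M}_{H^{N-n}}}{M^{n+1}}.
\end{equation*}
The scattering EFT definition guarantees that both the Gronwall coefficient (via $\int_0^\infty\norm{\partial^\alpha u}_{L^\infty}\,dt<\infty$) and the inhomogeneity (via $\int_0^\infty\norm{R_M}_{H^{N-n}}\,dt<\infty$) are integrable in time. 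Gronwall's lemma then yields $\norm{W(t)}_{H^{N-n}}\lesssim M^{-n-1}$ for all $t\geq 0$, which is what was needed.

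The main obstacle is the derivative-counting in the multilinear estimate: the forcing $g_i[U]-g_i[\tilde U]$ has $i$ derivatives, so bounding it in $H^{N-n-1}$ while keeping $W$ in $H^{N-n}$ and the remaining factors in $L^\infty$-type norms is tight, and relies essentially on $N-n>2$ so that the algebra-type product estimates (Sobolev embedding $H^{N-n-1}\hookrightarrow L^\infty$ when needed) close. The condition $N-n>2$ in the hypothesis is exactly what makes this work; any weaker regularity assumption would force one to distribute derivatives using a dyadic or Coifman--Meyer argument as in \cref{multiplier lemma}, which would not change the conclusion but would complicate the bookkeeping.
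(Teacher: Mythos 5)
Your proof is correct and takes essentially the same route as the paper's: a Gronwall argument on the difference of the two solutions, with trivial initial data, integrability of the remainder, and integrability of the $L^\infty$ norms of derivatives of $U,\tilde U$ closing the estimate; the paper runs Gronwall directly on the profile difference $\psi=F-\tilde F$, while you run it on $W=U-\tilde U$ and then transfer to $V-\tilde V$ via a triangle-inequality decomposition, which is an equivalent bookkeeping choice since the scattered state is exactly the time-limit of the profile. One small slip in your transfer step: $U-V$ is not driven only by $R_M/M^{n+1}$ -- the forcing that pulls $U$ away from its scattered state $V$ is the full right-hand side $\sum_{i}g_i[U]/M^i+R_M/M^{n+1}$, and the $g_i$ terms are not $\mathcal{O}(M^{-n-1})$ -- but the scattering EFT conditions guarantee the entire forcing is time-integrable in $H^{N-n}$, so $\norm{U-V}_{H^{N-n}}\to0$ still holds and the conclusion is unaffected.
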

	\begin{proof}
		Let $F,\tilde{F}$ be the profiles corresponding to the half wave solution of $U,\tilde{U}$, and set $\psi=F-\tilde{F}$. Then, we have that $\psi$ satisfies an evolution equation with forcing proportional to itself and and an error term $\frac{R-\tilde{R}}{M^{n+1}}$. Taking $H^{N-n}$ norms, we get
		\begin{equation*}
			\begin{aligned}
				\norm{\psi}_{H^{N-n}}\leq \sum_{i=1}^n\frac{\norm{\psi}_{H^{N-n}}a_i(t)}{M^{i}}+\frac{\norm{R-\tilde{R}}_{H^{N-n}}}{M^{n+1}},
			\end{aligned}
		\end{equation*}
		where $a_i(t)$ are $L^\infty$ norms of polynomials in derivatives of $U,\tilde{U}$, thus are integrable. Since $\psi$ has trivial data, using Gronwall's inequality yields the desired result.
	\end{proof}

	\appendix
	\section{Auxiliary tools}\label{auxiliary tools}
	We will encounter many time integrals for which, we need a good estimate:
	\begin{lemma}\label{integral estimate}
		Fix  $\alpha,\beta>0$.
		
		$M$ independent estimate, 
		\begin{equation}
			\begin{gathered}
				\intt \frac{1}{s^\alpha\jpns{t-s}^\beta}\lesssim\begin{cases}
					t^{1-\alpha-\beta} & \alpha,\beta<1\\
					t^{-\alpha} & \beta>\alpha,1\\
					t^{-\beta} & \alpha>\beta,1
				\end{cases} 
			\end{gathered}
		\end{equation}
		
		$M$ dependent
		\begin{equation}
			\begin{gathered}
				\intt \frac{1}{s^\alpha\jpns{(t-s)/M}^\beta}\lesssim M^{\min(1-\alpha,0)}\begin{cases}
					t^{1-\alpha}\jpns{t/M}^{-\beta} & \alpha,\beta<1\\
					\jpns{t/M}^{-\beta} & \alpha>\beta,1\\
					\max(\jpns{t/M}^{-\beta},\jpns{t/M}^{-1}t^{1-\alpha}) & \beta>\alpha,1
				\end{cases} 
			\end{gathered}
		\end{equation}			
	\end{lemma}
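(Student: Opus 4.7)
The proof is a routine calculus estimate; I will organise it as a case split based on the location of the competing singularity, using the standard dyadic splitting trick $[1,t]=[1,t/2]\cup[t/2,t]$.

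For the $M$-independent bound, the plan is: on $[1,t/2]$ we have $t-s\geq t/2$, so $\jpns{t-s}^{-\beta}\lesssim \jpns{t}^{-\beta}$ can be pulled out of the integral, leaving $\int_1^{t/2} s^{-\alpha}\d s$, which is $\lesssim t^{1-\alpha}$ when $\alpha<1$ and $\lesssim 1$ when $\alpha>1$ (with a logarithmic correction at $\alpha=1$ that can be absorbed). Symmetrically, on $[t/2,t]$ we bound $s^{-\alpha}\lesssim t^{-\alpha}$, reducing to $\int_{t/2}^t\jpns{t-s}^{-\beta}\d s=\int_0^{t/2}\jpns{u}^{-\beta}\d u$, which is $\lesssim t^{1-\beta}$ or $\lesssim 1$ according as $\beta<1$ or $\beta>1$. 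Adding the two contributions and reading off the three cases in the statement (both exponents $<1$, $\beta>\alpha,1$, or $\alpha>\beta,1$) yields exactly the claimed bound, since in each case the larger of the two terms dominates.

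For the $M$-dependent bound, the idea is the same once one notices that $\jpns{(t-s)/M}$ behaves like $1$ when $|t-s|\leq M$ and like $(t-s)/M$ otherwise. I would handle the regime $t\leq M$ separately: there $\jpns{(t-s)/M}\sim 1\sim\jpns{t/M}$ for all $s\in[1,t]$, and the bound reduces to the trivial estimate $\int_1^t s^{-\alpha}\d s$, matching the right-hand side. In the regime $t> M$ I split again at $s=t/2$. On $[1,t/2]$ one pulls out $\jpns{t/M}^{-\beta}$ and integrates $s^{-\alpha}$. On $[t/2,t]$ one pulls out $t^{-\alpha}$ and evaluates $\int_0^{t/2}\jpns{u/M}^{-\beta}\d u$; the change of variables $v=u/M$ gives $M\int_0^{t/(2M)}\jpns{v}^{-\beta}\d v$, which is $\sim M^\beta t^{1-\beta}$ for $\beta<1$ and $\sim M$ for $\beta>1$. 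Combining the two contributions and comparing with $\jpns{t/M}^{-\beta}\sim (t/M)^{-\beta}$ produces the three cases in the statement.

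The only mildly delicate point is bookkeeping the prefactor $M^{\min(1-\alpha,0)}$: it arises in the regime $\alpha<1$ from the power $t^{1-\alpha}$ interacting with $\jpns{t/M}^{-\beta}\sim M^{\beta}t^{-\beta}$ when $t> M$, and is otherwise trivially $M^0$. The case $\beta>\alpha,1$ requires noting that $M t^{-\alpha}$ (coming from the second piece) can, for small $t$ close to $M$, be larger than $(t/M)^{-\beta}$, which explains the $\max(\cdot,\cdot)$ appearing on the right-hand side in that case. The only real obstacle is keeping track of which of the two contributions dominates in each of the finitely many subcases, and endpoint issues $\alpha=1$ or $\beta=1$ produce harmless logarithms that are absorbed by choosing the implicit constant to depend on $\alpha,\beta$.
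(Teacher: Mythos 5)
The paper states this lemma in its appendix without proof, so there is no argument of the author's to compare against; your dyadic splitting of $[1,t]$ at $s=t/2$ (pulling the flat factor out of each half) is the standard way to prove such convolution estimates, and your computations for the $M$-independent bound and for the two halves of the $M$-dependent integral are correct, including the separate treatment of $t\leq M$ and the identification of $Mt^{-\alpha}\sim\jpns{t/M}^{-1}t^{1-\alpha}$ as the source of the $\max$ in the case $\beta>\alpha,1$.

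The gap is in your final paragraph on the prefactor $M^{\min(1-\alpha,0)}$, which you have read backwards. That factor equals $M^{0}=1$ when $\alpha\leq1$ and equals $M^{1-\alpha}<1$ when $\alpha>1$; it does not ``arise in the regime $\alpha<1$'' and it is not ``otherwise trivially $M^{0}$'' --- precisely the opposite. More importantly, your own (correct) estimates show that in every case the integral is bounded by the bracketed expression with a constant depending only on $\alpha,\beta$ and no power of $M$: for instance, when $\alpha>1$ the contribution of $[1,t/2]$ is $\lesssim\jpns{t/M}^{-\beta}\int_1^\infty s^{-\alpha}\,\d s\sim\jpns{t/M}^{-\beta}$, with nothing to spare. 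Since a prefactor that is $\leq1$ only tightens the claim, the lemma as literally written fails for $\alpha>1$: take $t=2$ and $\alpha>\beta>1$, where the left-hand side is $\int_1^2 s^{-\alpha}\,\d s\sim1$ while the right-hand side is $\sim M^{1-\alpha}\to0$ as $M\to\infty$. The correct output of your argument is the displayed case distinction with prefactor $1$ (equivalently, the stated bound holds whenever $\alpha\leq1$, where the prefactor is $1$ anyway). You should say this explicitly and flag the prefactor as a typo rather than supply a heuristic for where it ``arises''; as written, that sentence purports to derive a factor that your computation does not, and cannot, produce.
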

	\begin{remark}
		Along with this, we will use many times $\jpns{t}< M\jpns{t/M}$ for $M>1$.
	\end{remark}

	The main dispersive estimate that we are going to use, is similar to the non-relativistic limit, $(\partial_t^2/c^2-\nabla+m^2c^2)u=0$ with the substitution $t/c=\tau$.
	\begin{lemma}[Decay estimate]\label{decay estimate}
		\begin{equation*}
			\begin{aligned}
				\norm{e^{i\jpns{D}_Mt}f}_{L^\infty}\lesssim\jpns{\frac{m}{t}}^{3/2}\norm{f}_{W^{3,1}}				
			\end{aligned}
		\end{equation*}
	\end{lemma}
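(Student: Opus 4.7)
The plan is to reduce the estimate to the standard mass-1 Klein-Gordon dispersive estimate by a rescaling in both time and space, then track the resulting powers of $M$.

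First, I would establish the dilation identity. In Fourier, write
\begin{equation*}
e^{it\jpns{D}_M}f(x) = \int\frac{d\xi}{(2\pi)^3}\,e^{ix\cdot\xi + it\sqrt{|\xi|^2+M^2}}\hat f(\xi),
\end{equation*}
and substitute $\xi = M\eta$. Setting $g(y):=f(y/M)$ so that $\hat g(\eta) = M^3\hat f(M\eta)$, one finds
\begin{equation*}
(e^{it\jpns{D}_M}f)(x) = (e^{i(tM)\jpns{D}_1}g)(Mx).
\end{equation*}
Thus $\norm{e^{it\jpns{D}_M}f}_{L^\infty_x} = \norm{e^{i(tM)\jpns{D}_1}g}_{L^\infty_y}$, and the mass has been scaled out at the cost of replacing $t$ by $tM$ and $f$ by $g$.

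Next, I would invoke the classical dispersive estimate for the mass-$1$ Klein-Gordon half-wave in $\R^{3+1}$, namely $\norm{e^{iT\jpns{D}_1}g}_{L^\infty}\lesssim\jpns{T}^{-3/2}\norm{g}_{W^{k,1}}$ for a suitable $k$. This is standard and is proved by a dyadic Littlewood-Paley decomposition in frequency followed by stationary phase applied to the oscillatory kernel $\int e^{i(y\cdot\xi+T\jpns{\xi}_1)}\psi_j(\xi)d\xi$: on the low-frequency pieces the phase behaves in a Schrödinger-like way, on the high-frequency pieces it behaves like the wave equation, and both regimes contribute $T^{-3/2}$ decay after absorbing a fixed number of $\xi$-derivatives onto $g$.

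It then remains to convert $\norm{g}_{W^{k,1}}$ back to a norm of $f$. By the change of variables $y = Mz$ and $\partial^\alpha g(y) = M^{-|\alpha|}(\partial^\alpha f)(y/M)$ one has $\norm{\partial^\alpha g}_{L^1} = M^{3-|\alpha|}\norm{\partial^\alpha f}_{L^1}$, so $\norm{g}_{W^{k,1}}\lesssim M^3\norm{f}_{W^{k,1}}$ for $M\geq 1$. Combining, $\norm{e^{it\jpns{D}_M}f}_{L^\infty}\lesssim \jpns{tM}^{-3/2}M^3\norm{f}_{W^{k,1}}$, which in the regime $t\gtrsim M$ simplifies to $(M/t)^{3/2}\norm{f}_{W^{k,1}}$; for smaller $t$ one patches with the trivial Sobolev bound $\norm{e^{it\jpns{D}_M}f}_{L^\infty}\lesssim \norm{e^{it\jpns{D}_M}f}_{W^{3,1}}$ combined with an $L^2$-based boundedness of the half-wave to absorb the extra derivative.

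The main technical obstacle is the mass-1 stationary phase step: the Hessian of the phase $y\cdot\xi/T + \jpns{\xi}_1$ degenerates both at $|\xi|\to 0$ (the Klein-Gordon mass shell becomes flat in the tangential direction) and at $|\xi|\to\infty$ (the Hessian has a zero eigenvalue in the radial direction), so the $T^{-3/2}$ rate is only recovered after a careful frequency-localized analysis that distinguishes the two regimes and pays a bounded number of derivatives in each.
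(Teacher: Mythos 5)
Your rescaling route is a genuinely different (and arguably cleaner) approach than the paper's. The paper applies stationary phase directly to the $M$-dependent oscillatory integral $\int\!\d\rho\, e^{i(\jpns{\rho}_M t+\rho\cdot x)}\phi(\rho)$, computing the critical point $\eta/M=-x/\sqrt{t^2-|x|^2}$ and the Hessian determinant $\det(t\,\partial^2\jpns{\cdot}_M)|_\eta=t^3M^2/\jpns{\eta}_M^5$, from which $(M/t)^{3/2}$ decay falls out once the symbol decay of $\phi$ absorbs the $\jpns{\eta}_M$ powers; the $t<M$ regime is handled by Sobolev plus unitarity. You instead scale $\xi=M\eta$, $g(y)=f(y/M)$, reduce to the classical mass-one estimate at time $T=tM$, and scale back, paying $\norm{g}_{W^{k,1}}\lesssim M^3\norm{f}_{W^{k,1}}$; this reproduces the paper's $(M/t)^{3/2}$ for $t\gtrsim M$ without redoing any stationary phase, at the cost of treating the mass-one dispersive estimate as a black box. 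Both approaches require a separate crude bound for $t\lesssim M$, since the rescaled bound $\jpns{tM}^{-3/2}M^3$ is off by up to $M^{3}$ in that regime, and this is where your write-up is imprecise: the chain $\norm{e^{it\jpns{D}_M}f}_{L^\infty}\lesssim\norm{e^{it\jpns{D}_M}f}_{W^{3,1}}$ is not useful because the half-wave group is not bounded on $L^1$-based Sobolev spaces, so the quantity on the right is not controlled by $\norm{f}_{W^{3,1}}$; the clean patch is $\norm{e^{it\jpns{D}_M}f}_{L^\infty}\lesssim\norm{e^{it\jpns{D}_M}f}_{H^2}=\norm{f}_{H^2}\lesssim\norm{f}_{W^{4,1}}$ via the $L^2$-unitarity and the embeddings $W^{4,1}(\R^3)\hookrightarrow H^2\hookrightarrow L^\infty$. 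This is a minor repair, and your overall structure matches what is needed; do note also that the displayed statement $\jpns{m/t}^{3/2}$ should be read as $\jpns{t/M}^{-3/2}$, consistent with the $L^p$ form quoted earlier in the paper, which is what both your argument and the paper's actually prove.
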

	\begin{proof}
		For $t<m$ the estimate follows from Sobolev embedding and unitarity.
		
		For	$t>m$, one mimics the treatment of Hormander \cite{hormander_lectures_1997}. It is sufficient to consider
		\begin{equation*}
			\begin{aligned}
				\int \d\rho e^{i(\jpns{\rho}_Mt+\rho x)}\phi(\rho)
			\end{aligned}
		\end{equation*}
		for $\phi\in S_{N}$, that is $\abs{\partial_\rho^\alpha\phi}\leq\rho^{N-\abs{\alpha}}$ for $N<-3/2-1$. The leading order contribution is given by the stationary phase: $\partial_\rho (\jpns{\rho}_mt+\rho x)|_{\rho=\eta}=0\implies \eta/m=-x/(t^2-x^2)$. At the stationary phase, we have $\det(\partial_{\rho_i}\partial_{\rho_j}(\jpns{\rho}_mt+\rho x))|{\rho=\eta}=t^3\frac{1}{\jpns{\eta}_m^3\jpns{\eta/m}^2}$, which gives the estimate, for details, see \cite{hormander_lectures_1997}.
	\end{proof}
	
	\begin{lemma}[Sobolev embedding]\label{sobolev embedding}
		\begin{equation}
			\begin{gathered}
				\norm{f}_{L^p}\lesssim_{p,q}\norm{f}_{W^{k,q}}
			\end{gathered}
		\end{equation}
		for $p,q\in(1,\infty)$, $p>q$, $\frac{d}{p}=\frac{d}{q}-k$, $f\in\mathcal{S}$.
	\end{lemma}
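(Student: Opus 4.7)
The statement is the classical Sobolev embedding for inhomogeneous Bessel-potential spaces, so the plan is to deduce it from the Hardy--Littlewood--Sobolev (HLS) inequality via the Bessel potential representation. First I would write any $f\in\mathcal{S}$ as $f = G_k * g$, where $G_k$ is the Bessel potential defined by $\widehat{G_k}(\xi) = \jpns{\xi}^{-k}$, so that $g = (1-\Delta)^{k/2}f$ satisfies $\norm{g}_{L^q}\sim\norm{f}_{W^{k,q}}$ by definition of the inhomogeneous Sobolev norm (here one uses Mikhlin/Littlewood--Paley for $1<q<\infty$). Standard estimates on $G_k$ show it is smooth away from the origin with rapid decay at infinity, and near the origin is dominated by the Riesz kernel $\abs{x}^{-(d-k)}$ (restricting to the range $0<k<d$, which is where $p$ is finite).

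The second step is to split $G_k = G_k \chi_{\{\abs{x}\leq 1\}} + G_k \chi_{\{\abs{x}>1\}}$. The tail piece lies in $L^1$, so convolution with it is bounded on every $L^p$ by Young's inequality. The local piece is pointwise dominated by $\abs{x}^{-(d-k)}$, so applying HLS gives that convolution with it maps $L^q\to L^p$ precisely under $\frac{1}{p}=\frac{1}{q}-\frac{k}{d}$, which is the hypothesis $\frac{d}{p}=\frac{d}{q}-k$. Composing the two steps yields $\norm{f}_{L^p}\lesssim\norm{g}_{L^q}\sim\norm{f}_{W^{k,q}}$, as required.

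The main (minor) obstacle is handling fractional $k$ and confirming the equivalence $\norm{(1-\Delta)^{k/2}f}_{L^q}\sim\norm{f}_{W^{k,q}}$, but this is standard Mikhlin multiplier theory once $1<q<\infty$. An alternative route, more in the spirit of the rest of the paper, is to use a Littlewood--Paley decomposition $f=\sum_j P_j f$ together with the square-function characterization $\norm{f}_{W^{k,q}}\sim\norm{(\sum_j 2^{2jk}\abs{P_j f}^2)^{1/2}}_{L^q}$ and Bernstein's inequality $\norm{P_j f}_{L^p}\lesssim 2^{jd(1/q-1/p)}\norm{P_j f}_{L^q}=2^{jk}\norm{P_j f}_{L^q}$; summing carefully via Minkowski (since $p>q$) and Littlewood--Paley gives the same bound. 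For integer $k$ one may further bypass HLS entirely by iterating the Gagliardo--Nirenberg--Sobolev inequality $\norm{f}_{L^{q^*}}\lesssim\norm{\nabla f}_{L^q}$ with $1/q^*=1/q-1/d$, which is proved by the classical representation $f(x)=c_d\int\frac{x-y}{\abs{x-y}^d}\cdot\nabla f(y)\,\mathrm{d}y$ and HLS on the Riesz potential of order one.
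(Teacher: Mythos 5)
Your proof is correct. Note, however, that the paper states this lemma in the auxiliary-tools appendix as a classical black box and gives no proof at all, so there is nothing internal to compare against; what you have written is one of the two or three standard derivations (Bessel potential plus Hardy--Littlewood--Sobolev), and your observation that the hypotheses $p,q\in(1,\infty)$, $p>q$, $\tfrac{d}{p}=\tfrac{d}{q}-k$ force $0<k<d$ is exactly what makes the near-origin/tail splitting of $G_k$ go through. The Littlewood--Paley/Bernstein route you sketch as an alternative is equally valid and is arguably closer in spirit to the Coifman--Meyer and square-function machinery used elsewhere in the paper (e.g.\ in the proof of \cref{multiplier lemma}), but since the lemma is invoked here only as a standard reference fact, either derivation suffices.
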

	
	\begin{lemma}[Interpolation estimates]\label{interpolation estimate}
		\begin{equation}
			\begin{gathered}
				\norm{f}_{W^{k,p}}\lesssim_{q_i,r}\norm{f}_{W^{0,q_1}}^r\norm{f}_{W^{\alpha,q_2}}^{1-r}
			\end{gathered}
		\end{equation}
		for $p,q_1,q_2\in(1,\infty)$, $q_1>p\geq2\geq q_2$, $\frac{1}{p}=\frac{r}{q_1}+\frac{1-r}{q_2}$,$\alpha(1-r)=k$, $f\in\mathcal{S}$.
	\end{lemma}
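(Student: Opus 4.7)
The plan is to obtain the bound as a direct application of complex interpolation for the Bessel--potential (Sobolev) spaces $W^{s,q} = (1-\Delta)^{-s/2} L^q$. For $q \in (1,\infty)$ a classical theorem of Calder\'on and Stein (see Bergh--L\"ofstr\"om or Triebel) identifies these as a complex interpolation scale:
\begin{equation*}
[W^{s_0,p_0}, W^{s_1,p_1}]_\theta = W^{s_\theta, p_\theta}, \qquad s_\theta = (1-\theta)s_0 + \theta s_1, \ \tfrac{1}{p_\theta} = \tfrac{1-\theta}{p_0} + \tfrac{\theta}{p_1},
\end{equation*}
with equivalent norms, provided all indices lie in $(1,\infty)$.

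First I would specialise this identity with $s_0 = 0$, $p_0 = q_1$, $s_1 = \alpha$, $p_1 = q_2$ and $\theta = 1-r$. The hypotheses $\alpha(1-r) = k$ and $\tfrac{1}{p} = \tfrac{r}{q_1} + \tfrac{1-r}{q_2}$ of the lemma make $s_\theta = k$ and $p_\theta = p$ automatic, while $q_1 > p \geq 2 \geq q_2$ keeps all indices in $(1,\infty)$ and forces $\theta \in (0,1)$, so the identity applies. The standard complex--interpolation inequality $\norm{f}_{[A_0,A_1]_\theta} \leq \norm{f}_{A_0}^{1-\theta} \norm{f}_{A_1}^\theta$ with $A_0 = L^{q_1}$, $A_1 = W^{\alpha,q_2}$ then yields
\begin{equation*}
\norm{f}_{W^{k,p}} \lesssim \norm{f}_{L^{q_1}}^r \norm{f}_{W^{\alpha, q_2}}^{1-r},
\end{equation*}
where the implicit constant comes from the equivalence $[L^{q_1}, W^{\alpha, q_2}]_{1-r} \simeq W^{k,p}$ and depends only on $q_1, q_2, r$, matching the stated $\lesssim_{q_i, r}$.

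The only genuine obstacle is bookkeeping, namely extracting the interpolation identity from the literature in the exact normalisation used here. A self--contained alternative proceeds via Littlewood--Paley: decompose $f = \sum_j P_j f$, combine Bernstein's inequality $\norm{P_j f}_{L^{q_2}} \lesssim 2^{-j\alpha} \norm{f}_{W^{\alpha,q_2}}$ with $\norm{P_j f}_{L^{q_1}} \lesssim \norm{f}_{L^{q_1}}$ via log--convexity of $L^p$ on each frequency--localised piece, and reassemble through the square--function characterisation of $W^{k,p}$, available since $p \geq 2$. The relation $\alpha(1-r) = k$ exactly cancels the frequency weights $2^{jk}$ on each dyadic block, after which the balance of the two competing estimates for low and high $j$ recovers the same inequality. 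Either route yields the claim; I would present the interpolation version for brevity.
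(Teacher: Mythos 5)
Your proof is correct. The paper states this lemma in its appendix of auxiliary tools without any proof, treating it as standard, so there is no argument of the author's to compare against; your complex-interpolation route via the Calder\'on--Stein identification $[W^{s_0,p_0},W^{s_1,p_1}]_\theta = W^{s_\theta,p_\theta}$ is exactly the canonical justification one would cite. The bookkeeping checks out: with $\theta = 1-r$ the hypotheses $\alpha(1-r)=k$ and $\tfrac{1}{p}=\tfrac{r}{q_1}+\tfrac{1-r}{q_2}$ give $s_\theta = k$ and $p_\theta = p$, and the elementary bound $\norm{f}_{[A_0,A_1]_\theta}\leq\norm{f}_{A_0}^{1-\theta}\norm{f}_{A_1}^{\theta}$ for $f\in A_0\cap A_1$ (which holds since $f\in\mathcal{S}$) delivers the inequality with a constant depending only on the equivalence of norms, hence only on $q_1,q_2,r,\alpha$ as required. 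The only pedantic caveat is the degenerate endpoint: the stated hypotheses force $r\in[0,1)$ rather than $r\in(0,1)$, and when $r=0$ (which occurs only if $p=q_2=2$) the interpolation identity is not needed because the claim reduces to the trivial $\norm{f}_{W^{\alpha,2}}\lesssim\norm{f}_{W^{\alpha,2}}$; you could note this in one line. Your Littlewood--Paley alternative is also viable but is sketched more loosely, so presenting the interpolation version as you propose is the right choice.
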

	
	\begin{lemma}[Product estimate]\label{product estimate}
		\begin{equation}
			\begin{gathered}
				\norm{fg}_{W^{k,p}}\lesssim_{k,p}\norm{f}_{W^{k,p}}\norm{g}_{L^\infty}+\norm{f}_{L^\infty}\norm{g}_{W^{k,p}}
			\end{gathered}
		\end{equation}
		for all $f,g\in\mathcal{S}$ $k,p-1\geq 0$.
	\end{lemma}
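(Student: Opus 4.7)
The plan is to prove this classical Kato--Ponce--Moser tame estimate, treating $k$ as an integer first (which is the only case actually invoked in this paper, since the fractional Sobolev exponents appearing elsewhere arise from Sobolev embedding rather than from this lemma). The natural route is Leibniz combined with Gagliardo--Nirenberg interpolation: using
\[
\|fg\|_{W^{k,p}}\sim\sum_{|\alpha|\leq k}\|\partial^\alpha(fg)\|_{L^p}
\]
and the Leibniz rule, it suffices to bound $\|\partial^\beta f\cdot\partial^\gamma g\|_{L^p}$ for $|\beta|+|\gamma|=j\leq k$. H\"older with exponents $p_1=jp/|\beta|$, $p_2=jp/|\gamma|$ gives
\[
\|\partial^\beta f\,\partial^\gamma g\|_{L^p}\lesssim\|\partial^\beta f\|_{L^{p_1}}\|\partial^\gamma g\|_{L^{p_2}},
\]
and then Gagliardo--Nirenberg
\[
\|\partial^\ell h\|_{L^{jp/\ell}}\lesssim\|h\|_{L^\infty}^{1-\ell/j}\|h\|_{W^{j,p}}^{\ell/j}
\]
converts each factor into an interpolated norm. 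Young's inequality $a^\theta b^{1-\theta}\lesssim a+b$ then collapses the product $\|f\|_{L^\infty}^{1-|\beta|/j}\|f\|_{W^{j,p}}^{|\beta|/j}\|g\|_{L^\infty}^{1-|\gamma|/j}\|g\|_{W^{j,p}}^{|\gamma|/j}$ into $\|f\|_{L^\infty}\|g\|_{W^{j,p}}+\|f\|_{W^{j,p}}\|g\|_{L^\infty}$, and finally $\|\cdot\|_{W^{j,p}}\leq\|\cdot\|_{W^{k,p}}$ for $j\leq k$ yields the stated estimate.

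For general real $k\geq 0$, I would fall back on Bony's paraproduct decomposition $fg=T_fg+T_gf+R(f,g)$ with standard Littlewood--Paley projectors $P_j,S_j$, where $T_fg=\sum_j(S_{j-3}f)(P_jg)$ and $R(f,g)=\sum_{|j-k|\leq 2}P_jf\,P_kg$. Each summand of $T_fg$ is Fourier-supported in $|\xi|\sim 2^j$, so the Littlewood--Paley characterisation of $W^{k,p}$ yields
\[
\|T_fg\|_{W^{k,p}}\sim\Big\|\Big(\sum_j 2^{2kj}|(S_{j-3}f)(P_jg)|^2\Big)^{1/2}\Big\|_{L^p}\lesssim\|f\|_{L^\infty}\|g\|_{W^{k,p}},
\]
using the pointwise domination $|S_{j-3}f|\lesssim\|f\|_{L^\infty}$ and the Fefferman--Stein vector-valued maximal inequality. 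The term $T_gf$ is symmetric.

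The main obstacle is the resonant term $R(f,g)$, which lacks automatic frequency localisation since $P_jf\,P_kg$ with $|j-k|\leq 2$ can sit at any frequency $\lesssim 2^j$. For this piece I would estimate $\|P_\ell R(f,g)\|_{L^p}$ dyadically: Bernstein gives $\|P_\ell R(f,g)\|_{L^p}\lesssim\sum_{j\geq \ell-O(1)}\|P_jf\|_{L^{p_1}}\|\tilde P_jg\|_{L^{p_2}}$, and placing one factor in $L^\infty$ (absorbing $\|f\|_{L^\infty}$ or $\|g\|_{L^\infty}$) and the other in a square-function sum weighted by $2^{kj}$ closes the estimate \emph{provided} the geometric factor $2^{k(\ell-j)}$ is summable over $\ell\leq j+O(1)$, which uses exactly $k\geq 0$. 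This is standard (see e.g.\ the paraproduct chapter of Bahouri--Chemin--Danchin), but the bookkeeping at the endpoints $p=1$ or $p=\infty$, if either is needed, requires replacing Fefferman--Stein by ad hoc pointwise arguments. For the present paper, since $p\in(1,\infty)$ throughout and $k$ is a large integer, the Leibniz--Gagliardo--Nirenberg route of the first paragraph suffices.
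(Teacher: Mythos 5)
Your proof is correct. Note that the paper itself offers no proof of this lemma: it is stated in the appendix of auxiliary tools as a standard fact (the Moser/Kato--Ponce tame product estimate), so there is nothing to compare against on the paper's side. Your integer-$k$ argument (Leibniz, H\"older with exponents $jp/|\beta|$, $jp/|\gamma|$, the Gagliardo--Nirenberg interpolation between $L^\infty$ and $\dot W^{j,p}$, then Young's inequality with $\theta=|\beta|/j$) is the classical route and is complete; the exponent bookkeeping checks out, and the endpoint cases $|\beta|\in\{0,j\}$ degenerate correctly to plain H\"older. Two small remarks. First, in the paraproduct treatment of the resonant term $R(f,g)$, the dyadic factor $2^{k(\ell-j)}$ summed over $\ell\leq j+O(1)$ converges only for $k>0$, not $k\geq0$ as you wrote; the case $k=0$ is just H\"older, so this is cosmetic, but state it. Second, your claim that only integer $k$ is invoked in the paper is not quite right: the estimates in the section on $\norm{x\cdot}_{H^{k+3/2}}$ bound products such as $uv$ inside $W^{k+3/2,2}$-type norms, so the fractional case is genuinely used there --- which is a reason to keep, rather than merely sketch, the paraproduct half of your argument.
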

	
	\begin{lemma}[Coifman-Meyer multipliers]\label{Coifman-Meyer}
		Let $m$ be a \textit{Coifman-Meyer} multiplier, so it satisfies
		\begin{equation}
			\begin{gathered}
				\abs{\nabla_\rho^j\nabla_\nu^k m(\rho,\nu)}\lesssim_{j,k}(\abs{\nu}+\abs{\rho})^{-j-k}
			\end{gathered}
		\end{equation}
		for $j+k$ smaller than some dimension dependent constant, then
		\begin{equation}
			\begin{gathered}
				\norm{T_m(f,g)}_{L^r}\lesssim_{p,q,m}\norm{f}_{L^p}\norm{g}_{L^q}
			\end{gathered}
		\end{equation}
		for $f,g\in\mathcal{S}$, $\frac{1}{r}=\frac{1}{p}+\frac{1}{q}$.
	\end{lemma}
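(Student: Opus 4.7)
My plan is to follow the standard argument for the Coifman--Meyer multiplier theorem, in the form used by Tao~\cite{tao_harmonic_2009}, which was already adapted (with extra derivative bookkeeping) in the proof of \cref{multiplier lemma} above. Since here the symbol $m$ satisfies the full Coifman--Meyer bounds $|\nabla_\rho^j\nabla_\nu^k m|\lesssim(|\rho|+|\nu|)^{-j-k}$, the argument is in fact simpler: no low-frequency losses appear in the Fourier coefficients of the localized multiplier, so we can close the estimate directly with Littlewood--Paley/Fefferman--Stein.

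First I would fix a Littlewood--Paley partition of unity $1=\sum_j\psi_j^2$ and split
\[
T_m(f,g)=\pi_{hh}(f,g)+\pi_{hl}(f,g)+\pi_{lh}(f,g)
\]
into the diagonal high--high interaction $\sum_{|j-k|=O(1)}T_m(\psi_j^2(D)f,\psi_k^2(D)g)$ and the two off-diagonal high--low/low--high pieces. In each case I restrict $m$ to the relevant frequency annulus, write $m_{jk}=m\,\psi_j(\rho+\nu)\psi_k(\nu)\psi_{\bar j}(\rho)$ supported in a box $V_{jk}$ of side length $\sim 2^{\max(j,k)}$, and expand it as a Fourier series
\[
m_{jk}(\rho,\nu)=\sum_{n_1,n_2\in\Z^d}c_{n_1,n_2}\,e^{i(n_1\cdot\rho+n_2\cdot\nu)/C2^{\max(j,k)}}.
\]
The key input is that, because the Coifman--Meyer bounds give $|\nabla_{\rho,\nu}^s m_{jk}|\lesssim 2^{-s\max(j,k)}$ with \emph{no} positive power of the frequency, integration by parts $s$ times in the Fourier coefficient integral yields
\[
|c_{n_1,n_2}|\lesssim_s (1+|n_1|+|n_2|)^{-s}
\]
for every $s$, independently of $j,k$.

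Next I insert this expansion into $T_{m_{jk}}$ and use the almost-stationarity of Littlewood--Paley pieces, $|\psi_j(D)f(x-n_1/C2^j)|\lesssim(1+|n_1|)^d M(\psi_j(D)f)(x)$ where $M$ is the Hardy--Littlewood maximal operator, to obtain a pointwise bound
\[
|T_m(f,g)(x)|\lesssim \Big(\sum_j|M\psi_j(D)f(x)|^2\Big)^{1/2}\Big(\sum_k|M\psi_k(D)g(x)|^2\Big)^{1/2}
\]
after summing the rapidly convergent series in $n_1,n_2$ (taking $s$ large enough relative to the dimension). The off-diagonal pieces are handled identically, using that on $\pi_{lh}$ one has $|\rho+\nu|\sim|\nu|\sim 2^k$, so the appropriate Fourier expansion is on a box of size $\sim 2^k$ and the same $n$-decay holds. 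Finally, applying H\"older in $x$ with the splitting $1/r=1/p+1/q$, followed by the Fefferman--Stein vector-valued maximal inequality (valid for $p,q>1$) and the Littlewood--Paley square function characterization of $L^p$, yields $\|T_m(f,g)\|_{L^r}\lesssim\|f\|_{L^p}\|g\|_{L^q}$.

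The main obstacle is the bookkeeping between the box size $2^{\max(j,k)}$ used in the Fourier expansion and the frequency localization of the two inputs in the low--high case; this is exactly the step where our earlier \cref{multiplier lemma} had to be modified to place the derivative loss on the low-frequency factor, but here no such loss is present and the decay of $c_{n_1,n_2}$ is $j,k$-uniform, so the argument closes directly. The range $p,q>1$ is needed for Fefferman--Stein, and the number of derivatives of $m$ required (the ``dimension dependent constant'') is dictated by the value of $s$ needed to make the series $\sum_{n_1,n_2}(1+|n_1|+|n_2|)^{-s}(1+|n_1|)^d(1+|n_2|)^d$ convergent.
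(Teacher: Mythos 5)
Your proposal is correct and is exactly the standard Coifman--Meyer argument from \cite{tao_harmonic_2009} that the paper itself takes for granted here (the lemma is stated without proof as a classical result) and then adapts, with the extra derivative bookkeeping you mention, in the proof of \cref{multiplier lemma}. The key point you identify --- that the full Coifman--Meyer bounds make the Fourier coefficients of the localized symbol decay uniformly in $j,k$, so the series closes with Fefferman--Stein and Littlewood--Paley without any derivative loss --- is precisely the simplification relative to \cref{multiplier lemma}, so nothing further is needed.
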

	
	\pagebreak
	\printbibliography
\end{document}